\newtheorem{theorem}{Theorem}[section]
\newtheorem{proposition}[theorem]{Proposition}
\newtheorem{corollary}[theorem]{Corollary}
\newtheorem{lemma}[theorem]{Lemma}
\theoremstyle{definition}
\newtheorem{definition}[theorem]{Definition}
\theoremstyle{remark}
\newtheorem{remark}[theorem]{Remark}
\newtheorem{notation}[theorem]{Notation}
\DeclareMathOperator{\Spec}{Spec}
\DeclareMathOperator{\charac}{char}
\def\Q{\mathbb{Q}}
\def\Z{\mathbb{Z}}
\def\P{\mathbb{P}}
\def\CH{\mathrm{CH}}
\def\far{\mathrm{far}}
\def\Gm{\mathbb{G}_m}
\def\GL{\mathrm{GL}}
\def\PGL{\mathrm{PGL}}
\def\B{\mathrm{B}}
\def\PB{\mathrm{PB}}
\def\cO{\mathcal{O}}
\def\oD{\overline{\Delta}}
\def\Pic{\mathrm{Pic}}
\def\cH{\mathcal{H}}
\def\cD{\mathcal{D}}
\def\cX{\mathcal{X}}
\def\cY{\mathcal{Y}}
\def\cZ{\mathcal{Z}}
\def\cC{\mathcal{C}}
\def\cW{\mathcal{W}}
\def\cP{\mathcal{P}}
\def\cM{\mathcal{M}}
\begin{document}
	\title[THE CHOW RING OF THE STACK OF POINTED HYPERELLIPTIC CURVES]{THE INTEGRAL CHOW RING OF THE STACK OF POINTED HYPERELLIPTIC CURVES}
	\author{ALBERTO LANDI}
	\address{Department of Pure Mathematics, Brown University, 151 Thayer Street, Providence, RI 02912, USA}
	\email{alberto\_landi@brown.edu}
	
	
	\keywords{Chow ring, intersection theory, hyperelliptic curve, moduli}

	\begin{abstract}
		We study the integral Chow ring of the stack $\cH_{g,n}$ parametrizing $n$-pointed smooth hyperelliptic curves of genus $g$. We compute the integral Chow ring of $\cH_{g,n}$ for $n=1,2$ completely, while for $3\leq n\leq2g+2$ we compute it up to the additive order of a single class in degree 2. We obtain partial results also for $n=2g+3$. In particular, taking $g=2$ and recalling that $\cH_{2,n}=\cM_{2,n}$, our results hold for $\CH^*(\cM_{2,n})$ for $1\leq n\leq7$.
	\end{abstract}
	\maketitle
	

	\section*{Introduction}
	Since the seminal work of David Mumford~\cite{Mum77}, several researchers have put their efforts in computing (both integral and rational) Chow rings of natural stacks of curves, as the stack $\overline{\cM}_{g,n}$ of stable $n$-pointed curves of genus $g$, and its smooth counterpart $\cM_{g,n}$.
	
	In this paper we are interested in the closed substack $\cH_{g,n}$ of $\cM_{g,n}$ classifying $n$-pointed smooth hyperelliptic curves of genus $g$.
	Notice that for $g=2$ we have $\cM_{2,n}=\cH_{g,2}$.
	
	Since a considerable number of papers have been written on the intersection theory of stacks, we recall only a few known results about $\cH_{g,n}$ and related stacks. In~\cite{Lar19}, Eric Larson has computed the Chow ring of $\overline{\cM}_2$, while the Chow ring of $\cM_2$ was previously computed by Angelo Vistoli in~\cite{Vis98}. This last result has been generalized to $\cH_g$ by Dan Edidin and Damiano Fulghesu in~\cite{EF09} for the even genus case, and by Andrea Di Lorenzo in~\cite{DL18} for the odd genus case. The pointed case was first addressed by Michele Pernice in~\cite{Per22}, where he studied the $n=1$ case. However, the result is not completely correct, due to a mistake at the end of the proof; in this article we supplement his ideas to correct it. Finally, Dan Edidin and Zhengning Hu studied the locus of $\cH_{g,n}$ where all the sections have image in the Weierstrass divisor (in particular, $1\leq n\leq2g+2$), and they have computed the Chow rings of those loci. We will use their results in this paper. Samir Canning and Hannah Larson (\cite{CL22}) have computed the rational Chow ring of $\cH_{g,n}$ for $n\leq2g+6$, using a different description of the stack.
	
	We compute the integral Chow ring of $\cH_{g,n}$ for $n=1,2$ completely, while for $3\leq n\leq2g+3$ we find the generators and almost every relation. As a consequence, we recover~\cite[Corollary 1.5]{CL22} when $n\leq2g+3$.
	
	All our results are based on the techniques of~\cite{Lan23}, where a new description of $\cH_{g,n}$ is found and used to compute its Picard group for all $g$ and $n$.
	\subsection*{Results and Strategy}
	We work over a field of characteristic strictly greater than $2g$, where $g\geq2$ is the genus. The main result of this paper is the (almost) computation of the integral Chow ring of $\cH_{g,n}$ for $1\leq n\leq2g+3$.
	
	Before stating the main Theorems, we briefly describe some important substacks of $\cH_{g,n}$ that will appear in the statements; see the first Section for the rigorous definitions.
	
	\begin{definition}\label{def: intro classes}
		For every $1\leq i\not=j\leq n$, define $\cZ_{g,n}^{i,j}$ to be the closed substack of $\cH_{g,n}$ parametrizing pointed hyperelliptic curves whose $i$-th and $j$-th section differ by the hyperelliptic involution. For every $1\leq i\leq n$, define $\cW_{g,n}^i$ to be the closed substack of $\cH_{g,n}$ parametrizing pointed hyperelliptic curves whose $i$-th section has image in the Weierstrass locus; morally, this corresponds to the case $i=j$ that we left out in the previous definition. All these closed substacks are irreducible, smooth, of codimension 1 in $\cH_{g,n}$, and yield classes $[\cZ_{g,n}^{i,j}]$, $[\cW_{g,n}^i]\in\Pic(\cH_{g,n})$, respectively. We denote the open complement of the union of all the $\cZ_{g,n}^{i,j}$ by $\cH_{g,n}^{\far}$. Finally, we denote the restriction of the psi classes to $\cH_{g,n}$ by $\psi_1,\ldots,\psi_n$.
	\end{definition}	
	The Chow ring of $\cH_{g,n}$ has already been computed in the case $n=1$ in~\cite{Per22} by Pernice, but the result is not correct as stated; we correct it by extending his ideas. The presentation of $\mathrm{CH}^*(\cH_{g,1})$ with the most geometric meaning that we get is described in the following Theorem.
	\begin{remark}
		We use the following notation. If we have a ring $A$ and elements $\beta_1,\ldots,\beta_s\in A$, and $p_1,\ldots,p_n\in\mathbb{Z}[x_1,\ldots,x_s]$ are polynomials in $s$ variables, we write
		\[
		A\simeq\frac{\mathbb{Z}[\beta_1,\ldots,\beta_s]}{(p_1(\beta_1,\ldots,\beta_s),\ldots,p_r{(\beta_1,\ldots,\beta_s)})}
		\]
		to mean that there is a surjective homomorphism
		\[
		\begin{tikzcd}
			\mathbb{Z}[x_1,\ldots,x_s]\arrow[r,twoheadrightarrow] & A
		\end{tikzcd}
		\]
		sending $x_i$ to $\beta_i$ and with kernel generated by $p_1,\ldots,p_r$. In this setting, we also say that the $p_i(\beta_1,\ldots,\beta_s)$ generate the ideal of relations.
	\end{remark}
	\begin{theorem}\label{thm: Chow ring Hg1 intro}
		Let $\charac k>2g$. Then,
		\[
		\mathrm{CH}^*(\cH_{g,1})\simeq\frac{\mathbb{Z}[\psi_1,[\cW_{g,1}^1]]}{((4g+2)((g+1)\psi_1-(g-1)[\cW_{g,1}^1]),[\cW_{g,1}^1]^2+\psi_1\cdot[\cW_{g,1}^1])}
		\]			
		regardless of the parity of $g$.
	\end{theorem}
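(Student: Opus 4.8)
The plan is to realize $\mathcal{H}_{g,1}$ as the universal hyperelliptic curve $\pi\colon\mathcal{C}_g\to\mathcal{H}_g$, so that $\psi_1=c_1(\omega_\pi)$ for the relative dualizing sheaf $\omega_\pi$, and $\mathcal{W}_{g,1}^1$ is the ramification divisor of the hyperelliptic involution (the locus where the marked point is a Weierstrass point). Writing $w:=[\mathcal{W}_{g,1}^1]$, I would first produce the two relations geometrically, then show — using the description of \cite{Lan23} together with the computations of Edidin--Fulghesu \cite{EF09} and Di Lorenzo \cite{DL18} for $\mathcal{H}_g$ — that $\psi_1$ and $w$ generate the whole ring and that no further relations survive.

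The quadratic relation is a normal-bundle computation. The divisor $\mathcal{W}_{g,1}^1$ is finite over $\mathcal{H}_g$ (the $2g+2$ Weierstrass points vary in families), so along $\mathcal{W}_{g,1}^1$ the normal direction in $\mathcal{C}_g$ is the vertical tangent direction; concretely, in the local model $y^2=t$ the divisor is cut out by $y$ while $\omega_\pi$ is generated by $dy$. Hence $N_{\mathcal{W}_{g,1}^1/\mathcal{H}_{g,1}}\cong\big(\omega_\pi|_{\mathcal{W}_{g,1}^1}\big)^{\vee}$, so that $i^*w=c_1(N_{\mathcal{W}_{g,1}^1})=-i^*\psi_1$ for $i\colon\mathcal{W}_{g,1}^1\hookrightarrow\mathcal{H}_{g,1}$. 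The projection formula then gives
\[
w^2=i_*(i^*w)=-i_*(i^*\psi_1)=-\psi_1\cdot w,
\]
which is exactly the relation $w^2+\psi_1 w=0$.

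For the degree-one relation I would appeal to the Picard computation of \cite{Lan23}, which already furnishes the presentation $\mathrm{Pic}(\mathcal{H}_{g,1})=\langle\psi_1,w\rangle/\big((4g+2)((g+1)\psi_1-(g-1)w)\big)$; here the multiplier $4g+2$ is the degree of the discriminant of a binary form of degree $2g+2$, i.e.\ the class contributed by excising the singular locus in the quotient presentation. To obtain generation in all degrees I would run the excision sequence for the divisor $\mathcal{W}_{g,1}^1$, with open complement the non-Weierstrass locus $\mathcal{U}$,
\[
\mathrm{CH}^{*-1}(\mathcal{W}_{g,1}^1)\xrightarrow{\ i_*\ }\mathrm{CH}^*(\mathcal{H}_{g,1})\xrightarrow{\ j^*\ }\mathrm{CH}^*(\mathcal{U})\longrightarrow 0 .
\]
Using the computation of Edidin and Hu for $\mathrm{CH}^*(\mathcal{W}_{g,1}^1)$ and a direct analysis of $\mathcal{U}$ (an \'etale double cover of the complement of the branch divisor in the associated $\mathbb{P}^1$-bundle), I would check that $\mathrm{CH}^*(\mathcal{W}_{g,1}^1)$ is generated by restrictions of $\psi_1$ and $w$ and that $\mathrm{CH}^*(\mathcal{U})$ is generated by $\psi_1|_{\mathcal{U}}$; by the projection formula $i_*$ then lands in $w\cdot\mathbb{Z}[\psi_1,w]$, and the sequence yields generation of $\mathrm{CH}^*(\mathcal{H}_{g,1})$ by $\psi_1,w$.

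Finally, to see that the two relations are the only ones, I would compute the additive structure of the candidate ring
\[
A:=\frac{\mathbb{Z}[\psi_1,w]}{\big((4g+2)((g+1)\psi_1-(g-1)w),\ w^2+\psi_1 w\big)} :
\]
using $w^2=-\psi_1 w$, every monomial reduces to $\psi_1^{\,d}$ or $\psi_1^{\,d-1}w$, so $A^d$ has free rank one and a single cyclic torsion summand for $d\geq1$, and I would match this degree by degree against the groups produced by the excision sequence above. The main obstacle — and precisely the point where the argument of \cite{Per22} goes astray — is the torsion bookkeeping: one must confirm that the surviving torsion is governed uniformly by $4g+2$ with the exact coefficients $(g+1)$ and $-(g-1)$, and that the parity-dependent features of $\mathrm{CH}^*(\mathcal{H}_g)$ (computed separately in \cite{EF09} and \cite{DL18}) contribute no extra relation in higher codimension. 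Controlling this torsion so as to collapse the two parity cases into the single presentation above is the crux of the proof.
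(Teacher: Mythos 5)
Your derivation of the two relations is sound: the normal-bundle argument giving $i^*[\mathcal{W}_{g,1}^1]=-i^*\psi_1$ and hence $[\mathcal{W}_{g,1}^1]^2+\psi_1\cdot[\mathcal{W}_{g,1}^1]=0$ is exactly the content of Lemma~\ref{lem:W12+W1psi1} (from~\cite{EH22}, as the paper itself remarks), and the degree-one relation is Proposition~\ref{prop:generators12}. Generation via the excision sequence for $\mathcal{W}_{g,1}^1$ is also in the spirit of what the paper does for higher $n$. The genuine gap is the final step, which you explicitly defer: showing that no further relations exist. Your plan requires the full integral ring $\mathrm{CH}^*(\mathcal{U})$ of the non-Weierstrass locus together with precise control of the kernel of $i_*$, but you offer no method for either. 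The ``direct analysis of $\mathcal{U}$'' does not go through as sketched: $\mathcal{P}_g\to\mathcal{H}_g$ is only a Brauer--Severi stack, which for odd $g$ is not Zariski-locally trivial (the group is $\mathbb{G}_m\times\mathrm{PGL}_2$, and $\mathrm{PGL}_2$ is not special), so there is no integral projective-bundle formula; and an \'etale double cover determines the integral Chow ring of the cover only after inverting $2$ --- precisely the prime where all the delicate torsion lives. In the paper, $\mathrm{CH}^*(\mathcal{U})=\mathrm{CH}^*(\mathcal{H}_{g,1}^{1,2})$ is \emph{deduced from} the main theorem, so using it as input would be circular. Computing it independently is essentially the original problem: one must excise the discriminant, and the entire content of Section~\ref{sec:ChowringHg1} is the equivariant Chow-envelope computation on $\mathcal{P}(2^{2g+2},1)$ reducing everything to $\beta_{1,0}=(s_1=0)\cdot(a_{2g+1}=0)$, together with the correction of Pernice's error: the map $\phi\colon\mathcal{P}(2^N,1)\to\mathbb{P}^N$ is the square-root stack along $(\mathcal{O}(1),a_{2g+2})$ (Proposition~\ref{prop:rootstack}), and the relevant fiber product is generically non-reduced with fundamental class $2$, which is exactly what makes the degree-two relation ``half'' (even $g$) or ``a fourth'' (odd $g$) of Pernice's. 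Your proposal names this torsion bookkeeping as the crux but supplies no mechanism to carry it out.

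There is also a concrete computational error in your consistency check of the candidate ring $A=\mathbb{Z}[\psi_1,w]/\bigl((4g+2)((g+1)\psi_1-(g-1)w),\,w^2+\psi_1w\bigr)$: it is false that $A^d$ has free rank one for all $d\geq1$. Multiplying the degree-one relation by $w$ and using $w^2=-\psi_1w$ yields $(4g+2)(2g)\,\psi_1^{d-1}w=4g(2g+1)\,\psi_1^{d-1}w=0$, which together with $\psi_1^{d-1}$ times the degree-one relation gives a finite-index lattice of relations on the two generators $\psi_1^d,\psi_1^{d-1}w$; hence $A^d$ is \emph{finite} for every $d\geq2$ (consistent with $\psi_1^2$ being rationally zero), and in general it is a sum of two cyclic groups rather than one (e.g.\ for $g=2$, $A^2\simeq\mathbb{Z}/10\oplus\mathbb{Z}/120$). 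So the degree-by-degree matching you propose would be comparing against the wrong target groups, and in any case cannot close the argument without an independent computation of the image of the pushforward from the discriminant --- which is what the paper's Chow-envelope machinery provides.
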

	We manage to extend the results to the 2-pointed case.
	\begin{theorem}\label{thm: Chow Hg2 intro}
		Let $\charac k>2g$. Then, $\mathrm{CH}^*(\cH_{g,2})$ is generated by the classes of $\cW_{g,2}^1$, $\cZ_{g,2}^{1,2}$ and $\psi_1$, and the ideal of relations is generated by
		\addtolength{\jot}{2pt}
		\begin{align*}
			\bullet & \ (4g+2)((g+1)\psi_1-(g-1)[\cW_{g,2}^1]),\\
			\bullet & \  [\cZ_{g,2}^{1,2}]^2+\psi_1\cdot[\cZ_{g,2}^{1,2}],\qquad[\cW_{g,2}^{1}]^2+\psi_1\cdot[\cW_{g,2}^1],\qquad
			[\cZ_{g,2}^{1,2}]\cdot[\cW_{g,2}^1],\\
			\bullet & \  (2g+1)(g+1)\psi_1^2+(2g+1)(3g-1)[\cW_{g,2}^1]^2-(2g+1)(g+1)[\cZ_{g,2}^{1,2}]^2.
		\end{align*}
	\end{theorem}
	
	Notice that in both cases the Chow ring is generated in degree 1. This holds for $3\leq n\leq2g+3$ as well.
	
	\begin{proposition}\label{prop: generated degree 1 intro}
		For all $3\leq n\leq2g+3$, the ring $\mathrm{CH}^*(\cH_{g,n})$ is generated by the classes $[\cZ_{g,n}^{1,2}],\ldots,[\cZ_{g,n}^{1,n}]$ and $[\cW_{g,n}^1]$.
	\end{proposition}
	
	To prove Theorem~\ref{thm: Chow Hg2 intro} and Proposition~\ref{prop: generated degree 1 intro}, we compute the Chow ring of $\cH_{g,n}^{\far}$ for $n\leq2g+3$, by means of equivariant intersection theory of~\cite{EG98}, leveraging the presentation of $\cH_{g,n}^{\far}$ as a quotient stack obtained in~\cite[Propositions 2.19 and 2.21]{Lan23}. The results are Corollary~\ref{cor: Chow Hg2far} and Proposition~\ref{prop: Chow Hgnfar n>=3}; see also Remark~\ref{rmk: Chow Hg2g+3far}. In particular, we show that $\CH^*(\cH_{g,n}^{\far})$ is generated in degree 1 for $n\leq2g+3$.
	
	Then, for every $1\leq i<j\leq n$, we identify $\cZ_{g,n}^{i,j}$ with an open substack $\cH_{g,n-1}^{i}$ of $\cH_{g,n-1}$ and use the localization sequence to get an exact sequence
	\[
	\begin{tikzcd}
		\bigoplus_{i<j}\mathrm{CH}^*(\cH_{g,n-1}^{i})\arrow[r] & \mathrm{CH}^*(\cH_{g,n})\arrow[r] & \mathrm{CH}^*(\cH_{g,n}^{\far})\arrow[r] & 0.
	\end{tikzcd}
	\]
	Using induction on $n$ and the result for $n=1$, we are able to prove Proposition~\ref{prop: generated degree 1 intro}.
	
	Moreover, for every $n$, in Section~\ref{sec: generators Chow} we find numerous relations between Chow classes using the results in~\cite{Lan23} on the Picard group of $\cH_{g,n}$, which are enough to prove Theorem~\ref{thm: Chow Hg2 intro}.
	
	\begin{remark}
	Unfortunately, the relations found are not enough to determine the integral Chow ring in the case of $3\leq n\leq2g+3$. For $3\leq n\leq2g+2$, the only missing information is the additive order of the degree 2 class $[\cZ_{g,n}^{1,2}]^2$, which we only manage to bound. The case of $2g+3$ is less well behaved. The Chow ring of $\cH_{g,2g+3}$ is very different from those for $n\leq2g+2$, as this stack is now a scheme, thus the Chow ring vanishes in degree greater than its dimension. In this case, also the multiplicative order of $[\cW_{g,n}^1]$ is unknown.
	
	See Section~\ref{sec: Hgn n>=3} for the precise statements in the case $n\geq3$, in particular Theorems~\ref{thm: Chow Hg n=3},~\ref{thm: Chow Hgn n=4},~\ref{thm: Chow Hgn 5<=n<=2g+2} and~\ref{thm: Chow Hgn n=2g+3}.
	\end{remark}
	
	\subsection*{Structure of the paper}
	In the first Section we recall some results obtained in~\cite{Lan23}, which are essential for the rest of the paper.
	In the second and third Section we compute the Chow rings of $\cH_{g,1}$ and of the open substack $\cH_{g,2}^{\far}$ of $\cH_{g,2}$, respectively. Next, in Section~\ref{sec: generators Chow}, we show that for $1\leq n\leq2g+3$ the Chow ring of $\cH_{g,n}$ is generated in degree 1 and we compute some relations. These are enough to compute the Chow ring of $\cH_{g,2}$, as shown in Section~\ref{sec: Chow Hg2}. In Section~\ref{sec: Hgn n>=3}, we compute $\CH^*(\cH_{g,n}^{\far})$ for $3\leq n\leq2g+2$, and we conclude the (almost) computation of $\CH^*(\cH_{g,n})$ for $n\leq2g+3$.
	\subsection*{Acknowledgements}
	I would like to thank Angelo Vistoli for the valuable time spent discussing ideas on this and related subjects. I am also grateful to Dan Edidin and Zhengning Hu for the useful discussions we had. I wish to warmly thank Michele Pernice for his support and taking the time to talk about this work. Finally, the author is deeply thankful to the reviewer for his suggestions that drastically improved the clarity of the exposition.
	\section{Preliminaries and the geometry of $\cH_{g,n}$}
	We will use the description of $\cH_{g,n}$ obtained in~\cite{Lan23}, where the author computed the Picard group of $\cH_{g,n}$ for all $g$ and $n$. We recall the description and results here. We will always assume that the characteristic of the base field does not divide $2g+2$.
	
	Recall that the stack $\cH_{g,n}$ is equivalent to the stack parametrizing families $(C\xrightarrow{f}P\xrightarrow{\pi}S,\widetilde{\sigma}_1,\ldots,\widetilde{\sigma}_n)$ of $n$-pointed double covers $f:C\rightarrow P$ of a Brauer-Severi scheme $\pi:P\rightarrow S$ of relative dimension 1 over $S$, branched over a divisor of degree $2g+2$ and étale over $S$. We use this interpretation throughout the paper.
	\subsection{Description of $\cH_{g,n}$}
	Let
	\[
	\begin{tikzcd}
		\cC_g\arrow[r,"f"] & \cP_g\arrow[r,"\pi"] & \cH_g
	\end{tikzcd}
	\]
	be the universal family of hyperelliptic curves, ramifying over the `universal' Brauer-Severi stack $\cP_g\rightarrow\cH_g$, and let $\alpha$ be the universal hyperelliptic involution on $\cC_g$. Let $\cC_g^n$ be the fiber product of $n$ copies of $\cC_g$ over $\cH_g$, and $\Delta_{i,j}$ be the $i,j$-th diagonal. We denote by $\alpha_i:\cC_g^n\rightarrow\cC_{g}^n$ the involution which acts as $\alpha$ on the $i$-th component and as the identity on the others.
	Notice that $\cH_{g,n}$ is the open substack of $\cC_g^n$ given by the complement of the extended diagonal. Let $\mathrm{pr}_i:\cC_g^n\rightarrow\cC_{g}$ be the projection to the $i$-th factor.
	\begin{definition}
		Let $\cW_{g,1}^1\subset\cH_{g,1}=\cC_g$ be the ramification locus of the map $\cC_g\rightarrow\cP_g$. For all $1\leq i\leq n$, define the closed substack $\cW_{g,n}^i\subset\cH_{g,n}$ as
		\[
			\cW_{g,n}^i:=\mathrm{pr}_i^{-1}(\cW_{g,1}^1)\cap\cH_{g,n}.
		\]
		For all $i\not=j$, define the closed substacks $\cZ_{g,n}^{i,j}\subset\cH_{g,n}$ as
		\[
			\cZ_{g,n}^{i,j}:=\alpha_i^{-1}(\Delta_{i,j})\cap\cH_{g,n}.
		\]
		Finally, define the open substack $\cH_{g,n}^{\far}\subset\cH_{g,n}$ as the complement of the $\cZ_{g,n}^{i,j}$'s.
	\end{definition}
	\begin{remark}
		From the definition and~\cite[Subsection 2.2]{EH21}, it follows that all $\cW_{g,n}^i$ and $\cZ_{g,n}^{i,j}$ are integral and smooth, and that $\cZ_{g,n}^{i,j}=\cZ_{g,n}^{j,i}$. For this reason, we usually assume $i<j$. Moreover, the intersection of $\cZ_{g,n}^{i,j}$ and $\cZ_{g,n}^{l,m}$ is empty if and only if $|\{i,j\}\cap\{l,m\}|=1$, see~\cite[Remark 2.2]{Lan23} and next Remark~\ref{rmk: interpretation Zij Wi}.
	\end{remark}
	The following Remark clarifies what $\cW_{g,n}^i$, $\cZ_{g,n}^{i,j}$ and $\cH_{g,n}^{\far}$ parametrize.
	\begin{remark}\label{rmk: interpretation Zij Wi}
	Denote by
	\[
	\begin{tikzcd}
		\cC_{g,n}\arrow[r,"f"] & \cP_{g,n}\arrow[r,"\pi"] & \cH_{g,n}
	\end{tikzcd}
	\]
	the universal family of pointed hyperelliptic curves over $\cH_{g,n}$, realized as a double cover of $\cP_{g,n}$.
	Consider the universal $i$-th section $\widetilde{\sigma}_i:\cH_{g,n}\rightarrow\cC_{g,n}$, which induces a section $\sigma_i=f\circ\widetilde{\sigma}_i$ of $\cP_{g,n}\rightarrow\cH_{g,n}$. Then, $\cW_{g,n}^i$ is the locus where $\widetilde{\sigma}_i$ has image in the ramification locus of $f$. Similarly, $\cZ_{g,n}^{i,j}$ is the locus where $\sigma_i=\sigma_j$, that is where $\widetilde{\sigma}_i$ and $\widetilde{\sigma}_j$ differ by the hyperelliptic involution. In particular, given an object $(C\rightarrow P\rightarrow S,\widetilde{\sigma}_1,\ldots,\widetilde{\sigma}_n)$ of $\cH_{g,n}^{\far}$, this induces an object $(P\rightarrow S,\sigma_1,\ldots\sigma_n)$ of $\cM_{0,n}$. This defines a morphism $\cH_{g,n}^{\far}\rightarrow\cM_{0,n}$, studied in~\cite[Section 4]{Lan23}.
	\end{remark}
	The closed subschemes $\cW_{g,n}^i$ and $\cZ_{g,n}^{i,j}$ define Cartier divisors on $\cH_{g,n}$, and the classes $[\cZ_{g,n}^{i,j}]$ are subject to the following relations.
	\begin{proposition}\label{prop:relationsZ}
	Let $1<i<j\leq n$. Then,
	\[
		[\cZ_{g,n}^{i,j}]=[\cZ_{g,n}^{1,i}]+[\cZ_{g,n}^{1,j}]+[\cZ_{g,n}^{2,3}]-[\cZ_{g,n}^{1,2}]-[\cZ_{g,n}^{1,3}].
	\]
	Moreover, there are no other relations between the classes $[\cZ_{g,n}^{i,j}]$.
	\end{proposition}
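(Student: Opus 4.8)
The plan is to first establish the linear equivalence by an explicit Chern-class computation on the $\mathbb{P}^1$-bundle $\mathcal{P}_{g,n}\to\mathcal{H}_{g,n}$, and then to deduce the ``no other relations'' claim from the computation of $\mathrm{Pic}(\mathcal{H}_{g,n})$ in~\cite{Lan23}.

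For the relation itself, I would realize the Brauer--Severi stack $\mathcal{P}_g\to\mathcal{H}_g$ as $\mathbb{P}(V)$ for a rank-two bundle $V$ coming from the presentation of~\cite{Lan23} (even if $V$ only exists up to a twist, or \'etale-locally, the combinations of Chern classes below are twist-invariant, hence globally defined). Writing $M_i:=\sigma_i^*\mathcal{O}_{\mathcal{P}}(-1)\subset V$ for the pullback of the tautological subbundle under the $i$-th section, and $V$ also for its pullback to $\mathcal{H}_{g,n}$, the locus $\{\sigma_i=\sigma_j\}$ is exactly the vanishing locus of the induced morphism of line bundles
\[
M_i\hookrightarrow V\twoheadrightarrow V/M_j,
\]
since two saturated rank-one subbundles of $V$ agree as soon as one is contained in the other. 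As $\mathcal{Z}_{g,n}^{i,j}$ is an integral smooth divisor (so this section is nonzero and its zero scheme reduced), I obtain
\[
[\mathcal{Z}_{g,n}^{i,j}]=c_1\!\left(M_i^\vee\otimes(V/M_j)\right)=c_1(V)-c_1(M_i)-c_1(M_j),
\]
which is manifestly symmetric in $i,j$ and twist-invariant. The stated identity is now pure linear algebra: substituting this expression into the right-hand side, the coefficients of $c_1(V)$ and of each $c_1(M_k)$ match those of $[\mathcal{Z}_{g,n}^{i,j}]$ (the case $\{i,j\}=\{2,3\}$ reducing to the tautology), so the two sides coincide.

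For the second assertion, note that the relations just proved reduce the subgroup of $\mathrm{Pic}(\mathcal{H}_{g,n})$ generated by all the $[\mathcal{Z}_{g,n}^{i,j}]$ to the span of the $n$ classes $[\mathcal{Z}_{g,n}^{1,2}],\dots,[\mathcal{Z}_{g,n}^{1,n}]$ and $[\mathcal{Z}_{g,n}^{2,3}]$; ``no other relations'' is then the statement that these $n$ classes are $\mathbb{Z}$-linearly independent. I would prove this by matching them against the explicit description of $\mathrm{Pic}(\mathcal{H}_{g,n})$ obtained in~\cite{Lan23}: expressing each $c_1(M_i)$ and $c_1(V)$ in terms of the generators there and checking that the resulting $n$ vectors form part of a basis.

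I expect this last check to be the main obstacle, for two reasons. First, the independence is not formal: there are relations in $\mathrm{Pic}(\mathcal{H}_{g,n})$ among $c_1(V),c_1(M_1),\dots,c_1(M_n)$ (for instance those coming from the $\psi$-classes), so one genuinely needs the rank computation of~\cite{Lan23} rather than the naive count of generators. Second, the claim is integral, so verifying independence after $\otimes\,\mathbb{Q}$ does not suffice; I would either exhibit the $n$ classes as a saturated subset of a $\mathbb{Z}$-basis of the relevant piece of $\mathrm{Pic}(\mathcal{H}_{g,n})$, or produce $n$ test families over which the associated $n\times n$ degree matrix is invertible over $\mathbb{Z}$, so as to rule out torsion relations as well.
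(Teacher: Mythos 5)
Your proposal takes a genuinely different route from the paper, which for this proposition gives no argument at all: its proof is the citation of~\cite[Corollary 2.7 and Proposition 5.2]{Lan23}. You therefore supply an actual derivation where the paper defers, and the mechanism you choose is the right one. The twist-invariance remark is sound (and in fact more caution than needed: since $\pi:\mathcal{P}_{g,n}\to\mathcal{H}_{g,n}$ admits the section $\sigma_1$ for $n\geq1$, the obstruction class of this Brauer--Severi stack satisfies $\sigma_1^*\pi^*[\mathcal{P}_{g,n}]=[\mathcal{P}_{g,n}]$ and pulls back to zero on $\mathcal{P}_{g,n}$ itself, so a global rank-two $V$ exists), and the concluding linear algebra checks out in all cases, including the degenerate ones $i\in\{2,3\}$. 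The one inference that is not valid as written is the parenthetical ``as $\mathcal{Z}_{g,n}^{i,j}$ is an integral smooth divisor, \ldots its zero scheme is reduced'': smoothness and integrality of the \emph{support} do not prevent the section $M_i\to V/M_j$ from vanishing to order $\geq2$ along it, which would insert a multiplicity into the identity $[\mathcal{Z}_{g,n}^{i,j}]=c_1(V)-c_1(M_i)-c_1(M_j)$ and ruin the integral statement. The repair is standard but must be said: your section is the pullback, under $(\sigma_i,\sigma_j):\mathcal{H}_{g,n}\to\mathcal{P}_{g,n}\times_{\mathcal{H}_{g,n}}\mathcal{P}_{g,n}$, of the tautological map $\mathrm{pr}_1^*\mathcal{O}(-1)\to\mathrm{pr}_2^*(V/\mathcal{O}(-1))$, whose zero scheme is the \emph{reduced} diagonal (locally it is $t_1-t_2$), and $(\sigma_i,\sigma_j)$ is transverse to that diagonal because $\mathcal{H}_{g,n}$ is open in $\mathcal{C}_g^n$, so the $i$-th point moves freely with all other data fixed; equivalently, $\mathcal{Z}_{g,n}^{i,j}=\alpha_i^{-1}(\Delta_{i,j})\cap\mathcal{H}_{g,n}$ is by construction a transverse pullback of a smooth diagonal, hence reduced.

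For the second assertion, your reduction of ``no other relations'' to the $\mathbb{Z}$-linear independence of $[\mathcal{Z}_{g,n}^{1,2}],\ldots,[\mathcal{Z}_{g,n}^{1,n}],[\mathcal{Z}_{g,n}^{2,3}]$ is correct, and your insistence that a rational check does not suffice is well placed. But the verification you flag as the main obstacle is precisely the content of the results this paper already imports: Proposition~\ref{prop:generators} states that these $n$ classes together with $[\mathcal{W}_{g,n}^1]$ form a minimal generating set of $\mathrm{Pic}(\mathcal{H}_{g,n})$ with the single relation $(8g+4)[\mathcal{W}_{g,n}^1]=(4g+2)(g+1)([\mathcal{Z}_{g,n}^{1,2}]+[\mathcal{Z}_{g,n}^{1,3}]-[\mathcal{Z}_{g,n}^{2,3}])$. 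Any relation among the $\mathcal{Z}$-classes alone is then a kernel element with vanishing $[\mathcal{W}_{g,n}^1]$-coefficient, hence an integer multiple $k$ of that unique relation with $k(8g+4)=0$, forcing $k=0$; this disposes of torsion relations as well, with no need for test families. So at the level of this paper your plan closes immediately; just be aware that if you wanted a proof independent of~\cite{Lan23} you would be reproving its Proposition 5.2, since within~\cite{Lan23} the minimal-generators statement is downstream of exactly that independence computation.
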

	\begin{proof}
		See~\cite[Corollary 2.7 and Proposition 5.2]{Lan23}.
	\end{proof}
	Now, we study the open substack $\cH_{g,n}^{\far}\subset\cH_{g,n}$. For $n\leq2g+3$, this is isomorphic to the quotient of an open subscheme of a representation of a linear algebraic group by the same group. We report here the precise statements; see~\cite[Subsections 2.2, 2.3 and 2.4]{Lan23} for details.
	\begin{proposition}[{\cite[Theorem 4.1]{AV04}}]\label{prop:presentationHg}
	Let $\mathbb{A}^{2g+3}$ be the space of binary forms of degree $2g+2$, and $\Delta_g$ the locus of singular forms. Then,
	\[
		\cH_{g}\simeq\left[\frac{\mathbb{A}^{2g+3}\setminus\Delta_g}{\mathrm{GL}_2/\mu_{g+1}}\right]
	\]
	where a matrix $A$ acts on a binary form $f$ of degree $2g+2$ as
	\[
		A\cdot f(x,y)=f(A^{-1}(x,y)).
	\]
	\end{proposition}
	\begin{notation}\label{not: notation n=1,2}
	When $n=1$, we see the space $\mathbb{A}^{2g+3}$ as parametrizing pairs $(f,s_1)$ where $f$ is a binary form of degree $2g+2$, $s_1\in\mathbb{A}^1$, and $f(0,1)=s_1^2$. When $n=2$ it parametrizes triples $(f,s_1,s_2)$ such that $f(0,1)=s_1^2$ and $f(1,0)=s_2^2$. We denote by $\Delta_{g,n}$ the locus of singular forms.
	\end{notation}
	\begin{proposition}[{\cite[Proposition 2.6]{Per22},~\cite[Corollary 2.17]{Lan23}}]\label{prop:presentationHg1Hg2far}
	We have
	\[
	\cH_{g,1}\simeq\left[\frac{\mathbb{A}^{2g+3}\setminus\Delta_{g,1}}{\B_2/\mu_{g+1}}\right]
	\]
	where a lower triangular matrix $\B_2\ni A=\begin{bmatrix}
		a & 0\\
		b & c
	\end{bmatrix}$ acts on $(f,s_1)\in\mathbb{A}^{2g+3}$ as
	\[
	A\cdot (f,s_1)=(f\circ A^{-1},c^{-(g+1)}s_1).
	\]
	
	Similarly,
	\[
	\cH_{g,2}^{\far}\simeq\left[\frac{\mathbb{A}^{2g+3}\setminus\Delta_{g,2}}{(\mathbb{G}_m\times\mathbb{G}_m)/\mu_{g+1}}\right]
	\]
	where a diagonal matrix $A$ with entries $(a,c)$ acts on $(f,s_1,s_2)\in\mathbb{A}^{2g+3}$ as
	\[
		A\cdot (f,s_1,s_2)=(f\circ A^{-1},c^{-(g+1)}s_1,a^{-(g+1)}s_2).
	\]
	\end{proposition}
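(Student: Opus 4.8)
The plan is to derive both presentations from the Arsie--Vistoli description of $\mathcal{H}_g$ in Proposition~\ref{prop:presentationHg} by reduction of structure group, using the moduli interpretations $\mathcal{H}_{g,1}\cong\mathcal{C}_g$ and $\mathcal{H}_{g,2}^{\mathrm{far}}\cong(\mathcal{C}_g\times_{\mathcal{H}_g}\mathcal{C}_g)^{\circ}$, where the superscript $\circ$ denotes the open locus on which the two points have distinct images in the Brauer--Severi bundle (this is exactly the complement of the extended diagonal and of $\mathcal{Z}_{g,2}^{1,2}$). Writing $X=\mathbb{A}^{2g+3}\setminus\Delta_g$ and $G=\mathrm{GL}_2/\mu_{g+1}$, the tautological $\mathbb{P}^1$ carries a $G$-action because $\mu_{g+1}$ acts by scalars, hence trivially, on $\mathbb{P}^1$; thus $\mathcal{P}_g\cong[X\times\mathbb{P}^1/G]$. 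The engine of the proof is the identity $[X\times(G/H)/G]\cong[X/H]$ for a closed subgroup $H\subseteq G$.

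For $n=1$ I would write $\mathbb{P}^1=\mathrm{GL}_2/\mathrm{B}_2=G/\overline{\mathrm{B}}$ with $\overline{\mathrm{B}}=\mathrm{B}_2/\mu_{g+1}$ the stabilizer of the point $[0:1]$, so that $\mathcal{P}_g\cong[X/\overline{\mathrm{B}}]$ and the tautological point of the $\mathbb{P}^1$-bundle is $[0:1]$. The double cover $\mathcal{C}_g\to\mathcal{P}_g$ is cut out over this point by $z^2=f(0,1)=a_{2g+2}$, so $\mathcal{C}_g\cong[\widetilde X/\overline{\mathrm{B}}]$ with $\widetilde X=\{(f,s_1)\in X\times\mathbb{A}^1:s_1^2=f(0,1)\}$. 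Eliminating $a_{2g+2}=s_1^2$ identifies $\widetilde X$ with $\mathbb{A}^{2g+3}$ in the coordinates $(a_0,\dots,a_{2g+1},s_1)$, and the non-singularity condition becomes the complement of $\Delta_{g,1}$. A one-line computation gives $(A\cdot f)(0,1)=c^{-(2g+2)}f(0,1)$ for $A=\bigl[\begin{smallmatrix}a&0\\b&c\end{smallmatrix}\bigr]$, hence $s_1\mapsto c^{-(g+1)}s_1$, which is the stated action.

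For $\mathcal{H}_{g,2}^{\mathrm{far}}$ the key observation is that $\mathrm{GL}_2$ acts transitively on ordered pairs of distinct points of $\mathbb{P}^1$ with stabilizer of $([0:1],[1:0])$ equal to the diagonal torus $\mathbb{G}_m\times\mathbb{G}_m$; since the far condition says exactly that the two marked points have distinct images, the same reduction of structure group presents the base as $[X/((\mathbb{G}_m\times\mathbb{G}_m)/\mu_{g+1})]$. The two double covers now impose $s_1^2=f(0,1)=a_{2g+2}$ and $s_2^2=f(1,0)=a_0$; eliminating $a_0$ and $a_{2g+2}$ yields $\mathbb{A}^{2g+3}$ in coordinates $(a_1,\dots,a_{2g+1},s_1,s_2)$, and computing $(A\cdot f)(1,0)=a^{-(2g+2)}f(1,0)$ for $A=\mathrm{diag}(a,c)$ gives $s_2\mapsto a^{-(g+1)}s_2$ alongside $s_1\mapsto c^{-(g+1)}s_1$, as claimed.

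The point I would be most careful about is the weight $g+1$ on the section coordinates $s_i$. Since $z$ is a linear coordinate on the total space of the line bundle $L$ (with $L^{\otimes2}$ the sheaf containing $f$), it transforms through a character of the relevant subgroup, and this character is forced to be the unique square root, in the torsion-free character lattice, of the weight of $f$ at the fixed point; this is why $g+1$ rather than $2g+2$ appears and why there is no sign ambiguity. One then checks that $\mu_{g+1}$, embedded as scalars, acts trivially on the new coordinates, scaling $f$ by $\lambda^{-(2g+2)}=1$ and each $s_i$ by $\lambda^{-(g+1)}=1$, so the quotient groups $\mathrm{B}_2/\mu_{g+1}$ and $(\mathbb{G}_m\times\mathbb{G}_m)/\mu_{g+1}$ are well defined and the generic automorphism group remains $\mu_{2g+2}/\mu_{g+1}\cong\mathbb{Z}/2$, the hyperelliptic involution. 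The remaining verifications --- that the deleted loci are precisely $\Delta_{g,1}$ and $\Delta_{g,2}$, and that Weierstrass marked points are uniformly the locus $s_i=0$ --- are then routine, and the isomorphisms of stacks follow directly from the reduction-of-structure-group identities.
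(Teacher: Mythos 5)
Your core argument is correct, and it is essentially the argument behind the sources the paper cites for this statement (\cite[Proposition 1.6]{Per22} and \cite[Corollary 2.17]{Lan23}; the paper itself gives no proof, only the citation). The reduction of structure group from the Arsie--Vistoli presentation of Proposition~\ref{prop:presentationHg} --- using $[X\times(G/H)/G]\simeq[X/H]$ with $H$ the stabilizer $\mathrm{B}_2/\mu_{g+1}$ of $[0:1]$ for $n=1$, and the diagonal torus modulo $\mu_{g+1}$ as stabilizer of the ordered pair $([0:1],[1:0])$ for $\mathcal{H}_{g,2}^{\mathrm{far}}$ --- followed by elimination of $a_{2g+2}=s_1^2$ and $a_0=s_2^2$, is exactly the right mechanism, and your identification of the far locus with the locus of distinct images in the Brauer--Severi bundle is correct. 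Your handling of the weight on $s_i$ is also sound: since the character lattice of $\mathrm{B}_2$ (resp.\ of the torus) is torsion-free, $c^{-(g+1)}$ is the unique square root of $c^{-(2g+2)}$, so there is genuinely no sign ambiguity.

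One assertion in your verification paragraph is wrong, though: the generic automorphism group of $\mathcal{H}_{g,1}$ (and of $\mathcal{H}_{g,2}^{\mathrm{far}}$) is not $\mu_{2g+2}/\mu_{g+1}\simeq\mathbb{Z}/2$; it is trivial. The hyperelliptic involution is the class of a scalar matrix $\lambda I$ with $\lambda^{g+1}=-1$: it fixes $f$ but acts on each $s_i$ by $\lambda^{-(g+1)}=-1$, so it stabilizes $(f,s_1)$ only on the Weierstrass locus $s_1=0$, and $(f,s_1,s_2)$ only where $s_1=s_2=0$. Geometrically this is forced: the involution does not fix a generic marked point, and $\mathcal{C}_g\rightarrow\mathcal{H}_g$ is representable, so marking a point kills the generic $\mathbb{Z}/2$ of $\mathcal{H}_g$ (consistently with the paper's observation that $\mathcal{H}_{g,2g+3}$ is even a scheme). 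In fact the correct sanity check is the opposite of the one you ran: it is precisely because the residual $\mu_{2g+2}/\mu_{g+1}$ acts nontrivially on the $s_i$, exchanging the two lifts $\pm s_1$ over each non-Weierstrass point, that the chart $(f,s_1)$ presents the double cover $\mathcal{C}_g=\mathcal{H}_{g,1}$ rather than the $\mathbb{P}^1$-level datum $(f,\sigma_1)$ on $\mathcal{P}_g$. This slip does not affect the reduction-of-structure-group argument or the weight computations, which stand as written, but the verification paragraph should be corrected accordingly.
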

	\begin{notation}\label{not: weighted projective stack}
	Given distinct integers $d_1,\ldots,d_s$ and non-negative integers $r_1,\ldots,r_n$, we denote by $\cP(d_1^{r_1},\ldots,d_s^{r_s})$ the weighted projective stack where $r_i$ variables have degree $d_i$.
	\end{notation}
	\begin{proposition}[{\cite[Proposition 2.21]{Lan23}}]\label{prop:representationweightedprojectivestack3<=n<=2g+3}
		Let $3\leq n\leq 2g+3$. Then
		\[
		\cH_{g,n}^{\far}\simeq(\cP(2^{2g+3-n},1^n)\times(\mathbb{A}^{n-3}\setminus\widetilde{\Delta}))\setminus\overline{\Delta}_{g,n}
		\]
		where $\overline{\Delta}_{g,n}$ is the locus corresponding to singular forms. In particular, for $n=2g+3$,
		\[
		\cH_{g,2g+3}^{\far}\simeq(\mathbb{P}^{2g+2}\times(\mathbb{A}^{2g}\setminus\widetilde{\Delta}))\setminus\overline{\Delta}_{g,2g+3}
		\]
		and it is a scheme.
	\end{proposition}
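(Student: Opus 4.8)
The plan is to exhibit the isomorphism explicitly, building on the presentation of $\mathcal{H}_g$ in Proposition~\ref{prop:presentationHg} together with the description of the marked points recalled in the Construction. Over the chart $\mathbb{A}^{2g+3}\setminus\Delta_g$ a hyperelliptic curve is given by $y^2=f(x,y)$ inside the square root of the universal line bundle on the Brauer--Severi stack $\mathcal{P}_g$, and a marked point lying over $\bar p\in\mathbb{P}^1$ is the datum of a square root $s$ of $f(\bar p)$. On the far locus $\mathcal{H}_{g,n}^{\text{far}}$ the $n$ images $\sigma_1,\ldots,\sigma_n$ in $\mathcal{P}_g$ are pairwise distinct, which is exactly the effect of having removed all the $\mathcal{Z}_{g,n}^{i,j}$.

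First I would use the first three sections to rigidify. The three pairwise disjoint sections $\sigma_1,\sigma_2,\sigma_3$ trivialize the Brauer--Severi stack $\mathcal{P}_g$ and realize it as $\mathbb{P}^1$ with the three points placed at $[0:1]$, $[1:0]$ and $[1:1]$; since $\mathrm{PGL}_2$ acts simply transitively on ordered triples of distinct points, this reduces the structure group $\mathrm{GL}_2/\mu_{g+1}$ to the stabilizer of $(0,\infty,1)$, namely the image of the scalar torus $\mathbb{G}_m/\mu_{g+1}\cong\mathbb{G}_m$. The remaining $n-3$ images become coordinates $p_1,\ldots,p_{n-3}$ subject only to being distinct from $0$, $1$ and from each other, which is precisely the locus $\mathbb{A}^{n-3}\setminus\widetilde{\Delta}$. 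The residual $\mathbb{G}_m$ is the ambiguity in the choice of square roots, $(f,s)\sim(c^2f,cs)$; reading off the scalar action in Proposition~\ref{prop:presentationHg1Hg2far}, where $\lambda I$ scales $f$ by $\lambda^{-(2g+2)}$ and each $s_j$ by $\lambda^{-(g+1)}$, one sees that $\mu_{g+1}$ acts trivially and that the residual torus acts with weight $2$ on every coefficient of $f$ and weight $1$ on every $s_j$ and $t_j$.

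Next I would reconstruct the form. Imposing the conditions $f(0,1)=s_1^2$, $f(1,0)=s_2^2$, $f(1,1)=s_3^2$ and $f(1,p_i)=t_i^2$ yields $n$ equations that are linear in the $2g+3$ coefficients $a_0,\ldots,a_{2g+2}$. The associated matrix is a Vandermonde-type matrix in the evaluation points $0,\infty,1,p_1,\ldots,p_{n-3}$, and it has maximal rank exactly when these points are distinct, i.e. over $\mathbb{A}^{n-3}\setminus\widetilde{\Delta}$. One can therefore solve linearly for $n$ of the $a_i$ in terms of the $s_j,t_j$ and the remaining $2g+3-n$ coefficients, obtaining an isomorphism of the incidence variety with $\mathbb{A}^{2g+3}$ whose coordinates are the free $a$'s (of weight $2$) and the $s_j,t_j$ (of weight $1$). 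Quotienting by the residual $\mathbb{G}_m$ and removing the origin, which is contained in the singular locus $\overline{\Delta}_{g,n}$, identifies this factor with $\mathcal{P}(2^{2g+3-n},1^n)$; keeping track of the $p_j$ and excising the remaining singular forms gives $(\mathcal{P}(2^{2g+3-n},1^n)\times(\mathbb{A}^{n-3}\setminus\widetilde{\Delta}))\setminus\overline{\Delta}_{g,n}$. When $n=2g+3$ there are no weight-$2$ variables, so $\mathcal{P}(1^{2g+3})=\mathbb{P}^{2g+2}$ and the $\mathbb{G}_m$-action has all weights $1$, hence is free with a scheme quotient; as the other factors are schemes, $\mathcal{H}_{g,2g+3}^{\text{far}}$ is a scheme.

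The main obstacle is to upgrade these pointwise identifications into a genuine isomorphism of stacks, functorial in families. The two delicate points are: first, that the linear system for the $a_i$ is solvable uniformly over the whole base $\mathbb{A}^{n-3}\setminus\widetilde{\Delta}$, that is, that the relevant Vandermonde determinant is invertible there, so that the identification is an isomorphism of vector bundles rather than merely a bijection on points; and second, the careful bookkeeping of $\mu_{g+1}$ needed to confirm that the residual torus acts with weights $(2,1)$, which is exactly what forces the stacky structure $\mathcal{P}(2^{\bullet},1^{\bullet})$ to appear for $n<2g+3$ and to degenerate to honest projective space when $n=2g+3$. Both are carried out in~\cite[Subsections 2.2--2.4]{Lan23}; the argument above is only the geometric skeleton.
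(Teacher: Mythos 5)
Your proposal is correct and is essentially the paper's own route: the paper offers no independent proof of this statement, citing \cite[Proposition 2.21]{Lan23} directly, and your argument --- rigidifying the Brauer--Severi stack by the three pairwise disjoint sections to reduce the structure group to the residual $\mathbb{G}_m\simeq\mathbb{G}_m/\mu_{g+1}$, reconstructing $f$ linearly from its values at $0,\infty,1,p_1,\ldots,p_{n-3}$, and reading off the weights $(2,\ldots,2,1,\ldots,1)$ --- is exactly the construction the paper recalls in Remark~\ref{rmk:notationpolynomial} and Propositions~\ref{prop:presentationHg} and~\ref{prop:presentationHg1Hg2far} and that is carried out in \cite[Subsections 2.2--2.4]{Lan23}. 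You also correctly identify, and defer to the cited source just as the paper does, the only two genuinely delicate points: the uniform invertibility of the relevant Vandermonde minor over $\mathbb{A}^{n-3}\setminus\widetilde{\Delta}$ and the $\mu_{g+1}$ bookkeeping that produces the stacky weights (degenerating to $\mathbb{P}^{2g+2}$, hence a scheme, when $n=2g+3$).
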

	In Subsection~\ref{subsec: Chow Hgnfar n>=3}, we will use these presentations to compute their Chow ring.
	\subsection{The Picard group and classes of divisors}\label{subsec:Picardgroup}
	For our computation it will be crucial to know the Picard group of $\cH_{g,n}$, which has been computed in~\cite{Lan23} in a more general setting. We report the results here.
	\begin{theorem}[{\cite[Corollary 5.14]{Lan23}}]\label{thm: abstract Picard}
	Suppose that the ground field $k$ is of characteristic not dividing $2g+2$.
	\begin{itemize}
		\item If $g$ is odd, then \/ $\Pic(\cH_{g,n})\simeq\mathbb{Z}^{n}\oplus\mathbb{Z}/(8g+4)\mathbb{Z}$.
		\item If $g$ is even, then \/ $\Pic(\cH_{g,n})\simeq\mathbb{Z}^{n}\oplus\mathbb{Z}/(4g+2)\mathbb{Z}$.
	\end{itemize}
	Moreover, the torsion part always comes from $\cH_{g}$.
	\end{theorem}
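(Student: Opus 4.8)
The plan is to reduce the whole computation to the base stack $\mathcal{H}_g$, compute its Picard group from the quotient presentation, and then recover the free part $\mathbb{Z}^n$ by an inductive fibration argument along the universal curve. First I would compute $\mathrm{Pic}(\mathcal{H}_g)$ directly from Proposition~\ref{prop:presentationHg}. Since $\mathbb{A}^{2g+3}$ is an affine space, its $(\mathrm{GL}_2/\mu_{g+1})$-equivariant Picard group is just the character lattice, and removing the irreducible discriminant hypersurface $\Delta_g$ gives
\[
\mathrm{Pic}(\mathcal{H}_g)\simeq\widehat{\mathrm{GL}_2/\mu_{g+1}}\big/\langle[\Delta_g]\rangle .
\]
The characters of $\mathrm{GL}_2$ are the powers of $\det$, and $\det^{m}$ descends to $\mathrm{GL}_2/\mu_{g+1}$ exactly when $(g+1)\mid 2m$; hence $\widehat{\mathrm{GL}_2/\mu_{g+1}}\simeq\mathbb{Z}$, generated by $\det^{g+1}$ for $g$ even and by $\det^{(g+1)/2}$ for $g$ odd. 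The discriminant of a binary form of degree $2g+2$ is a semi-invariant of weight $\det^{(2g+2)(2g+1)}$, and expressing this exponent in terms of the generator yields the multiple $4g+2$ when $g$ is even and $8g+4$ when $g$ is odd. Thus $\mathrm{Pic}(\mathcal{H}_g)$ is finite cyclic of the asserted order and has no free part; this will be the torsion in the statement.

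Next I would show that pullback along the proper, flat, geometrically integral projection $\mathcal{C}_g^n\to\mathcal{H}_g$ embeds $\mathrm{Pic}(\mathcal{H}_g)$ into $\mathrm{Pic}(\mathcal{C}_g^n)$, and that this embedding survives restriction to the open substack $\mathcal{H}_{g,n}$. Since $\mathcal{H}_{g,n}$ is the complement of the diagonals, excision identifies the kernel of $\mathrm{Pic}(\mathcal{C}_g^n)\to\mathrm{Pic}(\mathcal{H}_{g,n})$ with the subgroup generated by the diagonal classes. A pulled-back torsion class restricts trivially to a fibre of $\mathcal{C}_g^n\to\mathcal{H}_g$, whereas the diagonal classes restrict nontrivially there, so no nonzero torsion pullback lies in this kernel. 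This gives an injection $\mathrm{Pic}(\mathcal{H}_g)\hookrightarrow\mathrm{Pic}(\mathcal{H}_{g,n})$ landing in the torsion subgroup; that it exhausts the torsion will follow once the free quotient is shown to be torsion-free, which is the content of the last paragraph.

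For the free part I would induct on $n$, realizing $\mathcal{H}_{g,n}$ as the complement in the universal curve $\mathcal{C}_{g,n-1}\to\mathcal{H}_{g,n-1}$ of the $n-1$ tautological sections $\Sigma_1,\ldots,\Sigma_{n-1}$, so that excision gives
\[
\bigoplus_{i=1}^{n-1}\mathbb{Z}[\Sigma_i]\to\mathrm{Pic}(\mathcal{C}_{g,n-1})\to\mathrm{Pic}(\mathcal{H}_{g,n})\to0 .
\]
The base case $n=1$ is the $\mathrm{B}_2/\mu_{g+1}$ presentation of Proposition~\ref{prop:presentationHg1Hg2far}, which after the same character-lattice computation gives $\mathrm{Pic}(\mathcal{H}_{g,1})\simeq\mathbb{Z}\oplus\mathrm{Pic}(\mathcal{H}_g)$, with free generator $\psi_1$ (equivalently $[\mathcal{W}_{g,1}^1]$ modulo torsion). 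To run the induction I would control the relative Picard group of the universal hyperelliptic curve through its double-cover structure over the conic bundle $\mathcal{P}_{g,n-1}$: the relative $\mathcal{O}(1)$ together with the ramification (Weierstrass) class account for the relative part, while the relation that a section plus its hyperelliptic conjugate forms the base-pulled-back $g^1_2$ pins down the section classes. Feeding in Proposition~\ref{prop:relationsZ}, which reduces the $\binom{n}{2}$ classes $[\mathcal{Z}_{g,n}^{i,j}]$ to the $n$ independent ones $[\mathcal{Z}_{g,n}^{1,2}],\ldots,[\mathcal{Z}_{g,n}^{1,n}],[\mathcal{Z}_{g,n}^{2,3}]$, I would exhibit $n$ divisor classes that freely generate modulo torsion.

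The main obstacle is precisely this control of the relative Picard group of the universal curve: I must check that passing from $\mathcal{H}_{g,n-1}$ to $\mathcal{C}_{g,n-1}$ raises the free rank by exactly one and creates no new torsion, and that excising the $n-1$ sections then brings the rank to $n$ while leaving the torsion equal to the pullback of $\mathrm{Pic}(\mathcal{H}_g)$. The stacky scalars $\mu_{g+1}$ are what produce the precise torsion order, so the weight bookkeeping must be done with care — this is the delicate step where the assertion that the torsion ``always comes from $\mathcal{H}_g$'' is really being proved, and where an error could creep in. Once the free rank is $n$ and the torsion is the cyclic group of the first step, the splitting $\mathbb{Z}^n\oplus\mathbb{Z}/(4g+2)\mathbb{Z}$ (resp. $\mathbb{Z}^n\oplus\mathbb{Z}/(8g+4)\mathbb{Z}$) is automatic, since a finitely generated abelian group with free quotient $\mathbb{Z}^n$ and cyclic torsion is their direct sum; this also yields the missing surjectivity of $\mathrm{Pic}(\mathcal{H}_g)\hookrightarrow\mathrm{Pic}(\mathcal{H}_{g,n})_{\mathrm{tors}}$.
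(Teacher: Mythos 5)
First, note that the paper does not actually prove this statement: it is imported verbatim from \cite[Corollary 5.14]{Lan23}, so your proposal can only be measured against that reference's strategy and against internal consistency. Your first step is fine: computing $\mathrm{Pic}(\mathcal{H}_g)$ from Proposition~\ref{prop:presentationHg} as the character lattice of $\mathrm{GL}_2/\mu_{g+1}$ modulo the weight of the discriminant is the standard Arsie--Vistoli argument, and your parity bookkeeping ($\det^{g+1}$ versus $\det^{(g+1)/2}$, yielding $4g+2$ versus $8g+4$) is correct. The genuine gap is in the inductive step. With $\mathcal{H}_{g,n}=\mathcal{C}_{g,n-1}\setminus(\Sigma_1\cup\cdots\cup\Sigma_{n-1})$, excision gives $\mathrm{Pic}(\mathcal{H}_{g,n})\simeq\mathrm{Pic}(\mathcal{C}_{g,n-1})/\langle[\Sigma_1],\ldots,[\Sigma_{n-1}]\rangle$, and each $[\Sigma_i]$ restricts to a degree-one line bundle on every geometric fibre of $\mathcal{C}_{g,n-1}\rightarrow\mathcal{H}_{g,n-1}$, hence is non-torsion. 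So if, as you assert, passing to the universal curve ``raises the free rank by exactly one'', then $\mathrm{Pic}(\mathcal{C}_{g,n-1})$ has rank $n$ and the quotient has rank at most $n-1$ --- contradicting the rank $n$ you are trying to prove. In fact the relative Picard rank is $n$, not $1$: besides the fibrewise-degree class you must count the Mordell--Weil-type classes of the conjugate sections $\alpha\Sigma_i$ (whose restrictions to $\mathcal{H}_{g,n}$ are exactly the divisors $\mathcal{Z}_{g,n}^{i,n}$) and of the Weierstrass divisor. Already for $n=2$ one sees $\mathrm{Pic}(\mathcal{C}_{g,1})$ has rank $3$, since $\mathrm{Pic}(\mathcal{H}_{g,2})\simeq\mathbb{Z}^2\oplus(\text{torsion})$ and $[\Sigma_1]$ is non-torsion. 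Your remark about the relative $\mathcal{O}(1)$ and the ramification class gestures at these extra classes, but the count ``exactly one'' makes the induction unsound as written.

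A second gap concerns the torsion. Granting injectivity of the pullback $\mathrm{Pic}(\mathcal{H}_g)\hookrightarrow\mathrm{Pic}(\mathcal{H}_{g,n})$ (your fibre argument needs the diagonal classes to be linearly independent on some geometric fibre $C^n$, which requires a very general fibre or the generic point, but is repairable), it does \emph{not} follow from ``the free quotient is $\mathbb{Z}^n$'' that the torsion is exhausted: a finitely generated group with free quotient $\mathbb{Z}^n$ containing $\mathbb{Z}/(4g+2)\mathbb{Z}$ may perfectly well have larger torsion, and the difference between the two parities is precisely an extra factor of $2$ of this kind. What actually pins the torsion down is a presentation: the $n+1$ classes $[\mathcal{W}_{g,n}^1]$, $[\mathcal{Z}_{g,n}^{1,j}]$ for $2\leq j\leq n$, and $[\mathcal{Z}_{g,n}^{2,3}]$ generate, subject to the single relation
\[
(8g+4)[\mathcal{W}_{g,n}^1]=(4g+2)(g+1)([\mathcal{Z}_{g,n}^{1,2}]+[\mathcal{Z}_{g,n}^{1,3}]-[\mathcal{Z}_{g,n}^{2,3}])
\]
of Proposition~\ref{prop:generators} (with the analogue of Proposition~\ref{prop:generators12} for $n\leq2$); the cokernel of this single relation vector is $\mathbb{Z}^n\oplus\mathbb{Z}/c\mathbb{Z}$ with $c=(4g+2)\gcd(2,g+1)$, i.e.\ $8g+4$ for $g$ odd and $4g+2$ for $g$ even. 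Proving that this is the \emph{only} relation is where \cite{Lan23} does the real work, and your proposal has no mechanism to rule out additional relations or additional torsion, so as it stands it establishes neither the rank nor the precise torsion order.
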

	For $n\geq3$, it is easy to describe the generators of the Picard group.
	\begin{proposition}[{\cite[Proposition 5.16]{Lan23}}]\label{prop: generators Picard}
		Let $n\geq3$ and assume that the characteristic of $k$ does not divide $2g+2$. Then
		\[
		\{[\cZ_{g,n}^{1,j}]\}_{2\leq j\leq n} \cup\{[\cZ_{g,n}^{2,3}]\}\cup\{[\cW_{g,n}^1]\}
		\]
		is a minimal set of generators of $\Pic(\cH_{g,n})$. Moreover,
		\begin{equation}
			(8g+4)[\cW_{g,n}^1]=(4g+2)(g+1)([\cZ_{g,n}^{1,2}]+[\cZ_{g,n}^{1,3}]-[\cZ_{g,n}^{2,3}])
		\end{equation}
		is the only relation, regardless of the parity of $g$.
	\end{proposition}
	To address the case of $n\leq2$ it is important to compute the classes of $\cW_{g,n}^1$ and $\psi_1$ in terms of a fixed base of $\Pic(\cH_{g,1})$. To choose one such base, we use the conventions of~\cite[Theorem 3.2]{Lan23}, which are summarized in the following Remark.
	
	\begin{remark}\label{rmk: explicit character basis Picard}
	For the proof of the next statements, see~\cite[Subsection 3.1.1]{Lan23}. For $n=1,2$, using the presentations in Proposition~\ref{prop:presentationHg1Hg2far}, the Picard group of $\cH_{g,1}$ and $\cH_{g,2}^{\far}$ can be seen as the quotient of the group of characters
	\[
		(\B_2/\mu_{g+1})^{\widehat{}}\simeq(T/\mu_{g+1})^{\widehat{}}\subset\widehat{T}\simeq\Z\oplus\Z
	\]
	by the subgroup generated by the class of $\Delta_{g,n}$, which is $2(g+1)(2g+1)\cdot(1,1)$. Here, $T\simeq\Gm\times\Gm$ is the maximal torus of $\B_2$ consisting of diagonal matrices, and the last isomorphism is induced by the two natural projections $T\rightarrow\Gm$. Here, the subgroup $(T/\mu_{g+1})^{\widehat{}}$ of $\widehat{T}$ is the set of pairs $(a,b)$ with $a+b$ divisible by $g+1$. If $g$ is odd, the isomorphism of Theorem~\ref{thm: abstract Picard} is induced by $(a,b)\mapsto((a-b)/2,(a+b)/(g+1))$, while if $g$ is even it is induced by $(a,b)\mapsto(a-b,(-ga+(g+2)b)/2(g+1))$. We will use these explicit bases for the following results. Notice that with these bases the pullback $\Pic(\cH_{g,1})\rightarrow\Pic(\cH_{g,2}^{\far})$ along the map $\cH_{g,2}^{\far}\rightarrow\cH_{g,1}$ that forgets the last section is the identity.
	\end{remark}
	
	\begin{lemma}[{\cite[Lemma 5.7]{Lan23}}]\label{lem:classW}
	Using the bases described in Remark~\ref{rmk: explicit character basis Picard},
	\[
		[\cW_{g,1}^1]=
		\begin{cases}
			(-(g+1)/2,1)\in\mathbb{Z}\oplus\mathbb{Z}/(8g+4)\mathbb{Z} & \text{ if }g\text{ odd},\\
			(-(g+1),g/2+1)\in\mathbb{Z}\oplus\mathbb{Z}/(4g+2)\mathbb{Z} & \text{ if }g\text{ even},
		\end{cases}
	\]
	in $\Pic(\cH_{g,1})$ and $\Pic(\cH_{g,2}^{\far})$.
	Similarly, in $\Pic(\cH_{g,2}^{\far})$ we have
	\[
		[\cW_{g,2}^2]=
		\begin{cases}
			((g+1)/2,1)\in\mathbb{Z}\oplus\mathbb{Z}/(8g+4)\mathbb{Z} & \text{ if }g\text{ odd},\\
			(g+1,-g/2)\in\mathbb{Z}\oplus\mathbb{Z}/(4g+2)\mathbb{Z} & \text{ if }g\text{ even},.
		\end{cases}
	\]
	\end{lemma}
	\begin{proposition}\label{prop:classpsi1}
		Using the basis described in Remark~\ref{rmk: explicit character basis Picard}, in $\Pic(\cH_{g,1})$ we have
		\[
			\psi_1=
			\begin{cases}
				(-(g-1)/2,1)\in\mathbb{Z}\oplus\mathbb{Z}/(8g+4)\mathbb{Z} & \text{ if }g\text{ odd},\\
				(-(g-1),g/2)\in\mathbb{Z}\oplus\mathbb{Z}/(4g+2)\mathbb{Z} & \text{ if }g\text{ even}.
			\end{cases}
		\]
	\end{proposition}
	\begin{proof}
		Consider the linear group $\B_2\times\mathbb{G}_m$ and its action on $\mathbb{A}^{2g+3}\times\mathbb{A}^3$ described as follows. Let $B\in\B_2$ and $\lambda\in\mathbb{G}_m$, and interpret the space $\mathbb{A}^{2g+3}\times\mathbb{A}^3$ as parametrizing tuples $((f,s_1),(x_0,x_1,y))$ with $f$ a binary form of degree $2g+2$ satisfying $f(0,1)=s_1^2$. Then the action is defined by the rule
		\[
		(B,\lambda)\cdot((f,s_1),(x_0,x_1,y))=((\lambda^{-2}f\circ B^{-1},\lambda^{-1}c^{-(g+1)}s_1),(B(x_0,x_1),\lambda^{-1}y))
		\]
		where
		\[B=\begin{bmatrix}
			a & 0\\
			b & c
		\end{bmatrix}\]
		and $B(x_0,x_1)=(ax_0,bx_0+cx_1)$.
		Notice that we have homomorphisms
		\begin{equation}\label{eq: homomorphism groups universal family Hg1}
		\begin{tikzcd}
			\B_2/\mu_{g+1}\arrow[r] & \B_2\times\mathbb{G}_m\arrow[r] & \B_2/\mu_{g+1}
		\end{tikzcd}
		\end{equation}
		sending $[A]=\begin{bmatrix}
			a & 0\\
			b & c
		\end{bmatrix}$ to $(c^{-1}A,c^{g+1})$ and sending $(B,\lambda)$ to the class of $\lambda^{1/(g+1)}B$, which are easily seen to be well defined. Notice that the composite is just the identity. Moreover, the projection
		\[
		\begin{tikzcd}
			\mathbb{A}^{2g+3}\times\mathbb{A}^3\arrow[r] & \mathbb{A}^{2g+3}, & ((f,s_1),(x_0,x_1,y))\mapsto(f,s_1)
		\end{tikzcd}
		\]
		admits a section
		\begin{equation}\label{eq: universal section Hg1}
		\begin{tikzcd}
			\mathbb{A}^{2g+3}\arrow[r] & \mathbb{A}^{2g+3}\times\mathbb{A}^3, & (f,s_1)\mapsto((f,s_1),(0,1,s_1)),
		\end{tikzcd}
		\end{equation}
		and both maps are equivariant with respect to the action of the two linear groups $\B_2\times\Gm$ and $\B_2/\mu_{g+1}$ and the homomorphisms in~\eqref{eq: homomorphism groups universal family Hg1}. Moreover, this last section has image in the closed subscheme $X'$ of $\mathbb{A}^{2g+3}\times\mathbb{A}^3$ defined by the equation $f(x_0,x_1)=y^2$. Notice that this locus is $\B_2\times\mathbb{G}_m$-invariant. Excising the locus of singular polynomials, we get an open invariant subscheme $X$ of $X'$. It is not hard to show that the induced morphism
		\[
		\begin{tikzcd}
			\varphi:\left[\frac{X}{\B_2\times\mathbb{G}_m}\right]\arrow[r] & \cH_{g,1}
		\end{tikzcd}
		\]
		is the universal family over $\cH_{g,1}$, with universal section $\widetilde{\sigma}$ induced by~\eqref{eq: universal section Hg1}. We can use this construction to compute the class of $\psi_1$ in $\Pic(\cH_{g,1})$.
		
		Notice that, using the exact sequence of differentials, the module of relative differentials of the morphism $\varphi$ seen at the level of schemes, meaning without passing to the quotient, is given by
		\[
		\frac{dx_0\oplus dx_1\oplus dy}{\langle f_{x_0}dx_0+f_{x_1}dx_1-2ydy\rangle}.
		\]
		Write a binary form $f$ of degree $2g+2$ as $f(x_0,x_1)=\sum_im_ix_0^{2g+2-i}x_1^i$, thus $m_0,\ldots,m_{2g+1},s_1$ give coordinates on $\mathbb{A}^{2g+3}$. Then, restricting along the section $\widetilde{\sigma}$, we get
		\[
		\frac{dx_0\oplus dx_1\oplus dy}{\langle m_{2g+1}dx_0+(2g+2)s_1^2dx_1-2s_1dy\rangle}
		\]
		which is a rank 2 vector bundle over $\mathbb{A}^{2g+3}\setminus\Delta_{g,1}$. Take the determinant, which is generated by $dx_0\wedge dy$ over $s_1\not=0$, and by $dx_1\wedge dy$ over $m_{2g+1}\not=0$. Notice that, when both $s_1$ and $m_{2g+1}$ are non-zero,
		\[
		\frac{1}{s_1^2}dx_0\wedge dy=\frac{2g+2}{m_{2g+2}}dx_1\wedge dy.
		\]
		This defines a non-vanishing regular section (recall that the characteristic of the base field does not divide $2g+2$), hence a trivialization of the pullback of the dual sheaf. Now, $(dx_0\wedge dy)/s_1^2$ is an invariant section generating the line bundle, and the action of $\B_2/\mu_{g+1}$ on it is via $a^{-1}c^{-g}$. We conclude by applying the change of basis as in Remark~\ref{rmk: explicit character basis Picard}.
	\end{proof}
	Together, the two results above yield the following Proposition.
	\begin{proposition}\label{prop: generators Picard n=1,2}
		The classes $\psi_1$ and $[\cW_{g,1}^1]$ form a minimal set of generators of $\Pic(\cH_{g,1})$, and
		\[
		(4g+2)(g+1)\psi_1=(4g+2)(g-1)[\cW_{g,1}^1]
		\]
		is the only relation. Adjoining $[\cZ_{g,2}^{1,2}]$, they give a minimal set of generators of $\Pic(\cH_{g,2})$, with the same relation.
	\end{proposition}
	We conclude the Section by describing the pullback $p^*:\Pic(\cH_{g,2}^{\far})\rightarrow\Pic(\cH_{g,3}^{\far})$ along the morphism $p:\cH_{g,3}^{\far}\rightarrow\cH_{g,2}^{\far}$ that forgets the last section, useful in Section~\ref{sec: generators Chow}.
	\begin{lemma}\label{lem: pullback Hg3far Hg2far}
		Using the bases of Remark~\ref{rmk: explicit character basis Picard} for $\Pic(\cH_{g,2}^{\far})$, the pullback along the morphism $p:\cH_{g,3}^{\far}\rightarrow\cH_{g,2}^{\far}$ that forgets the last section is
		\begin{equation}
			\Pic(\cH_{g,2}^{\far})\ni(a,b)\mapsto\begin{cases}
				b & \in\Z/(8g+4)\Z\simeq\Pic(\cH_{g,3}^{\far}),\quad \text{ if }g\text{ is odd},\\
				a+2b & \in\Z/(4g+2)\Z\simeq\Pic(\cH_{g,3}^{\far}),\quad \text{ if }g\text{ is even}.
			\end{cases}
		\end{equation}
		In particular,
		\begin{equation}\label{eq: equality W1 W2 psi1 on Hg3far}
			p^*[\cW_{g,2}^1]=p^*[\cW_{g,2}^2]=p^*\psi_1.
		\end{equation}
	\end{lemma}
	\begin{proof}
		The first part follows from the explicit computations of~\cite[Section 3]{Lan23}. Equation~\ref{eq: equality W1 W2 psi1 on Hg3far} follows from the first part, Lemma~\ref{lem:classW} and Proposition~\ref{prop:classpsi1}.
	\end{proof}
	\section{The Chow ring of $\cH_{g,1}$}\label{sec:ChowringHg1}
	In this Section we extend and correct the ideas of Pernice in~\cite{Per22} to compute the Chow ring of $\cH_{g,1}$. The only difference in the result is the relation in degree 2, which essentially amounts to `half' of the one of Pernice in the even genus case, and to `one fourth' in the odd genus case.
	
	Recall that $\cH_{g,n}\simeq[(\mathbb{A}^{2g+3}\setminus\Delta_{g,1})/(\B_2/\mu_{g+1})]$, by Proposition~\ref{prop:presentationHg1Hg2far}. By the localization sequence, we need to know the Chow ring of the classifying stack of $\B_2/\mu_{g+1}$ and compute the image of the pushforward along $i:\Delta_{g,1}\hookrightarrow\mathbb{A}^{2g+3}$.
	
	The first is well known, thanks to the isomorphisms
	\begin{equation}\label{eq: isom B2}
		\B_2/\mu_{g+1}\simeq
		\begin{cases}
			\B_2 & \text{ if }g\text{ is even},\\
			\Gm\times\PB_2 & \text{ if }g\text{ is odd},
		\end{cases}
	\end{equation}
	see~\cite[Proposition 2.7]{Per22}. In both cases, $\B_2/\mu_{g+1}$ is a unipotent split extension of a split 2-dimensional torus $T$, hence one can reduce to working with its maximal torus $T=(\Gm\times\Gm)/\mu_{g+1}$ consisting of diagonal matrices. Under the isomorphisms in~\eqref{eq: isom B2}, $T$ is mapped to the natural maximal torus $\Gm\times\Gm$ of the two groups on the right, depending on the parity of $g$. From now on, we will work $T$-equivariantly under the identifications induced by~\eqref{eq: isom B2}.
	
	To compute the image of $i_*$, we follow Pernice and use the theory of Chow envelopes, that we briefly recall now. For the proofs and more details see~\cite{EF09} and~\cite{Per22}; for the general theory on equivariant Chow rings, see~\cite{EG98}.
	\begin{definition}\label{def: Chow envelope Chow surjective}
		Let $f:\cX\rightarrow\cY$ be a proper, representable morphism of quotient stacks. We say that $f$ is Chow-surjective if the morphism of groups
		\[
		\begin{tikzcd}
			f_*:\mathrm{CH}_*(X)\arrow[r] & \mathrm{CH}_*(Y)
		\end{tikzcd}
		\]
		is surjective.
		
		We say that a morphism of algebraic stacks $f:\cX\rightarrow\cY$ is a Chow envelope if $f(K):\cX(K)\rightarrow\cY(K)$ is essentially surjective for every extension of fields $K/k$.
	\end{definition}
	\begin{proposition}[{\cite[Corollary 4.6]{Per22}}]\label{prop: Chow envelope surjectivity with T}
		Let $G$, $T$ be two groups schemes over $k$ and suppose we have an action of $G\times T$ on two algebraic spaces $X$ and $Y$ and a proper map $f:X\rightarrow Y$ which is $G\times T$-equivariant. Assume that $T$ is a special group. Suppose that $f^G$ is a Chow envelope and $f^{G\times T}$ is a proper representable morphism. Then $f^{G\times T}:[X/(G\times T)]\rightarrow[Y/(G\times T)]$ is Chow-surjective.
	\end{proposition}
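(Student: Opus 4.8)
The plan is to leverage Lemma~\ref{lem:ChowenvelopeimpliesChowsurjective}, but applied to the \emph{whole} group $G\times T$ rather than to $G$. Since $f^{G\times T}$ is already assumed proper and representable, that Lemma (with the role of the group played by $G\times T$) tells us it suffices to prove that $f^{G\times T}$ is itself a Chow envelope. Thus the entire problem reduces to upgrading the Chow-envelope property from $f^{G}$ to $f^{G\times T}$, and this is precisely the place where the hypothesis that $T$ is special will be used.

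The key local step I would isolate is the following claim: for any algebraic space $Z$ equipped with a $G\times T$-action, the natural morphism $\iota_Z\colon[Z/G]\to[Z/(G\times T)]$ induced by the inclusion $G\hookrightarrow G\times T$ is a Chow envelope. To prove this, I fix a field extension $K/k$ and an object $\eta\in[Z/(G\times T)](K)$, corresponding to a $(G\times T)$-torsor $P\to\Spec K$ together with an equivariant map $P\to Z$. Since $G$ is normal in $G\times T$ with quotient $T$, the associated bundle $P/G\to\Spec K$ is a $T$-torsor, and reductions of the structure group of $P$ along $G\hookrightarrow G\times T$ correspond exactly to sections of $P/G$. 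Because $T$ is special, every $T$-torsor over a field is Zariski-locally, hence globally, trivial, so such a section exists. The resulting reduction yields a $G$-torsor $P_0$ together with the restricted equivariant map $P_0\to Z$, that is, an object of $[Z/G](K)$ whose image under $\iota_Z$ is isomorphic to $\eta$. Hence $\iota_Z$ is essentially surjective on $K$-points for every $K$, as claimed.

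With this in hand I would assemble the pieces through the commutative square
\[
\begin{tikzcd}
{[X/G]}\arrow[r,"\iota_X"]\arrow[d,"f^G"'] & {[X/(G\times T)]}\arrow[d,"f^{G\times T}"]\\
{[Y/G]}\arrow[r,"\iota_Y"'] & {[Y/(G\times T)]}
\end{tikzcd}
\]
which commutes because all four maps are functorially induced by the equivariant morphism $f$ and the group inclusion $G\hookrightarrow G\times T$. By hypothesis $f^{G}$ is a Chow envelope, and by the claim (applied to $Z=Y$) so is $\iota_Y$; since a composition of essentially surjective morphisms of groupoids is essentially surjective, the composite $\iota_Y\circ f^{G}$ is a Chow envelope. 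By commutativity this composite equals $f^{G\times T}\circ\iota_X$. Finally, if a composite $h\circ g$ is essentially surjective on $K$-points then so is $h$ (every object hit by $h\circ g$ is in particular hit by $h$); taking $h=f^{G\times T}$ and $g=\iota_X$ shows that $f^{G\times T}$ is a Chow envelope. Combined with the standing assumptions that $f^{G\times T}$ is proper and representable, Lemma~\ref{lem:ChowenvelopeimpliesChowsurjective} then gives that $f^{G\times T}$ is Chow-surjective.

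The only genuinely delicate point is the reduction-of-structure-group argument in the second paragraph: one must verify that specialness of $T$ is exactly what forces the $T$-torsor $P/G$ over the field $K$ to split, and that the resulting reduction is compatible with the maps to $Z$, so that the $[Z/G](K)$-object produced really maps to $\eta$. Everything else is formal manipulation of essential surjectivity and of the commutative square, so I expect this torsor-splitting step to be the crux of the argument, while the remainder is bookkeeping.
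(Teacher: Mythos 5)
Your proof is correct and follows essentially the same route as the cited argument of~\cite[Corollary 3.6]{Per22}: you reduce to showing that $f^{G\times T}$ is a Chow envelope by factoring through the morphism $[Y/G]\rightarrow[Y/(G\times T)]$, whose essential surjectivity on points over field extensions is exactly where specialness of $T$ enters (triviality of $T$-torsors over fields, the relative version of the Remark preceding the Proposition), and then you conclude via Lemma~\ref{lem:ChowenvelopeimpliesChowsurjective} applied to the group $G\times T$. The torsor-splitting step you flag as delicate is indeed the crux, but it is the standard correspondence between sections of $P/G$ and reductions of structure group along $G\hookrightarrow G\times T$, and your compatibility check of the reduced torsor's map to $Z$ is the right verification.
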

	We set $N=2g+2$. Since we want to use the theory of Chow envelopes, we pass to the projective setting. To do so, we form the quotient $\cP(2^N,1)$ of $\mathbb{A}^{N+1}\setminus0$ by the action of $\mathbb{G}_m$ with weights $(2,\ldots,2,1)$, which carries an induced $T$-action, and let $\oD_{g,1}$ be the image of $\Delta_{g,1}$ under the quotient. After computing the image of the pushforward along the inclusion $i:\oD_{g,1}\hookrightarrow\cP(2^N,1)$, we can recover the Chow ring of $\cH_{g,1}$ using the $\Gm$-bundle formula, see~\cite[Proposition 3.3]{Per22}.
	
	\begin{remark}\label{rmk: interpretation Projective stack and tautological}
	We interpret a weighted projective stack $\cP(2^r,1)\simeq[(\mathbb{A}^{r+1}\setminus0)/\Gm]$ as parametrizing pairs $(f,s_1)$, where $f$ is a binary form of degree $r$ satisfying $f(0,1)=s_1^2$, and $(f,s_1)$ is equivalent to $(g,t_1)$ if there exists $\lambda\in\Gm$ such that $g=\lambda^2f$ and $t_1=\lambda s_1$. We interpret $\cP(2^r)$ as parametrizing binary forms $f$ of degree $r-1$, where $f$ and $g$ are equivalent if there exists $\lambda\in\Gm$ such that $g=\lambda^2f$.
	Finally, every projective space $\P^r$ is seen as parametrizing binary forms of degree $r$, up to standard scaling.
	\end{remark}
	
	In~\cite[Proposition 4.12]{Per22}, the author proves that the maps
	\[
	\begin{tikzcd}[row sep=tiny]
		a_r:\mathbb{P}^r\times\cP(2^{N-2r},1)\arrow[r] & \cP(2^N,1), & (f,(g,s_1))\mapsto(f^2g,f(0,1)s_1)\\
		b_r:\mathbb{P}^{r-1}\times\cP(2^{N-2r})\arrow[r] & \cP(2^N,1), & (f,g)\mapsto(x^2f^2g,0)
	\end{tikzcd}
	\]
	are $T$-equivariant with respect to some $T$-actions described in~\cite[Remark 4.7]{Per22}, and factor through $\oD_{g,1}$. Moreover, by taking their disjoint union with $1\leq r\leq N/2$ varying, they form a Chow envelope of $\oD_{g,1}$. In particular, by Proposition~\ref{prop: Chow envelope surjectivity with T}, it is enough to compute $a_{r*}$ and $b_{r*}$.
	
	The $T$-equivariant Chow ring of the sources of $a_r$ and $b_r$ are computed using the projection bundle formula; in particular, as $\CH(BT)$-algebras, they are generated by the pullback of the first Chern class of the tautological line bundles on the two components; see~\cite[Remark 5.1]{Per22}.
	\begin{notation}\label{not: notation xi}
	For every $j\geq1$, set $\xi_j:=c_1^T(\cO_{\P^j}(1))$, and $\xi_0:=1\in\CH_T^*(\Spec(k))$.
	\end{notation}
	Using the projection formula, we get the following:
	\begin{lemma}
		The image of $i_*$ is generated by the classes $\alpha_{r,i}:=a_{r*}(\xi_r^i)$ for $0\leq i\leq r$, and $\beta_{r,i}:=b_{r*}(\xi_{r-1}^i)$ for $0\leq i\leq r-1$.
	\end{lemma}
	
	Therefore, we are left with computing the ideal generated by the classes $\alpha_{r,i}$ and $\beta_{r,i}$. Before doing so, we explain why the method in~\cite{Per22} does not work.
	\subsection{Problems with the previous proof}\label{subsec:problems}
	The main problem of the computation in~\cite{Per22} is the following. Recall that $\cH_g\simeq[(\mathbb{A}^{N+1}\setminus\Delta_g)/(\GL_2/\mu_{g+1})]$, and
	\begin{equation}\label{eq: isom GL2}
		\GL_2/\mu_{g+1}\simeq
		\begin{cases}
			\GL_2 & \text{ if }g\text{ is even},\\
			\Gm\times\PGL_2 & \text{ if }g\text{ is odd},
		\end{cases}
	\end{equation}
	see~\cite[Proposition 4.4]{AV04}; restricting to $\B_2/\mu_{g+1}$ one gets the isomorphisms in~\eqref{eq: isom B2}. The Chow ring of $\cH_g$ has been computed in~\cite{EF09} and~\cite{DL18} with a similar technique: they pass to the projective setting, compute the image of the pushforward along $\oD_g\hookrightarrow\P^{N+1}$, and then use the $\Gm$-bundle formula to recover the Chow ring of $\cH_g$. To do so, they consider the equivariant Chow envelope consisting in the disjoint union of maps $\pi_r:\P^r\times\P^{N-2r}\rightarrow\P^N$, $(f,g)\mapsto f^2g$.
	
	The idea in~\cite{Per22} is then to leverage the known results on $\cH_g$ to compute $a_{r*}$ and $b_{r*}$. Pernice forms $T$-equivariant cartesian diagrams
	\[
	\begin{tikzcd}
		X\arrow[r]\arrow[d,"\psi"] & \cP(2^N,1)\arrow[d,"\phi"]\\
		\mathbb{P}^r\times\mathbb{P}^{N-2r}\arrow[r,"\pi_r"] & \mathbb{P}^N
	\end{tikzcd}
	\]
	and
	\begin{equation}\label{eq: diag br}
	\begin{tikzcd}
		Y\arrow[r]\arrow[d,"\psi'"] & \cP(2^N,1)\arrow[d,"\phi"]\\
		\mathbb{P}^{r-1}\times\mathbb{P}^{N-2r}\arrow[r,"\pi_r'"] & \mathbb{P}^N
	\end{tikzcd}
	\end{equation}
	where $\pi'_r(f,g)=x^2f^2g$, and $\phi(f,s_1)=f$.
	There is an induced map $a'_r$ from $\mathbb{P}^r\times\cP(2^{N-2r},1)$ to $X$, which factors $a_r$, and a map $b'_r$ from $\mathbb{P}^{r-1}\times\cP(2^{N-2r})$ to $Y$, which factors $b_r$. These maps are again proper and representable, see~\cite[Proposition 5.4]{Per22}, where he also shows that $a'_{r*}(1)=1$, $b'_{r*}(1)=1$. Then, he uses compatibility of proper pushforward and flat pullback of~\cite{Vis98} to conclude that everything comes from $\phi^*$, if 1 is in the image of the pullback under $\psi'$. While it is true that $\phi$ is flat, see Proposition~\ref{prop:rootstack}, the pullback of 1 under $\psi'$ is not 1. Indeed, for $r=1$, we will see that $Y$ is irreducible but not smooth, not even generically reduced. Precisely, the fundamental class $[Y]$ in $\mathrm{CH}_*(Y)$ corresponds to 2. This prevents the argument of Pernice to go through.
	
	Moreover, notice that $\phi$ behaves as the root stack $\sqrt[2]{(\cO(1),m_{N})/\mathbb{P}^N}\rightarrow\mathbb{P}^N$, where $m_0,\ldots,m_{N}$ are coordinates on $\P^N$ such that every binary form $f\in\P^N$ is written as $f(x_0,x_1)=\sum_i m_ix^{N-i}y^i$.
	We show that they are indeed equal; this proves that $\phi$ is flat. The result is slightly more general.
	\begin{notation}\label{notation: hyperplanes n=1}
		Given $f\in P^N$, we denote by $m_0,\ldots,m_N$ its coefficients, so that $f(x,y)=\sum_{i}m_ix^{N-i}y^i$. This induces coordinates on $\P^N$, that we denote by the same symbols. Similarly, if we interpret $\cP(r^N,1)$ as the space of pairs $(f,s_1)$ with $f$ binary form of degree $N$ such that $f(0,1)=s_1^r$, we define coordinates $m_0,\ldots,m_{N-1},s_1$ on $\cP(r^N,1)$, so that every element $(f,s_1)$ has $f$ of the form above, with $m_N=s_1^r$. Usually, we take $r=2$.
	\end{notation}
	\begin{proposition}\label{prop:rootstack}
		Let $N$ and $r$ be positive integers. The morphism
		\[
		\begin{tikzcd}
			\phi:\cP(r^N,1)\arrow[r] & \mathbb{P}^N, & (f,s_1)\mapsto f
		\end{tikzcd}
		\]
		identifies $\cP(r^N,1)$ with the $r$-th root stack $\sqrt[r]{(\cO(1),m_{N})/\mathbb{P}^N}$ over $\mathbb{P}^N$. In particular, $\phi$ is flat.
	\end{proposition}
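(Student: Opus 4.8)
The plan is to exhibit an explicit isomorphism between $\mathcal{P}(r^N,1)$ and the root stack $\sqrt[r]{(\mathcal{O}(1),a_{2g+2})/\mathbb{P}^N}$ by matching their universal properties, and then to deduce flatness from the well-known flatness of root stacks over smooth bases. First I would recall the modular description of both objects. The weighted projective stack $\mathcal{P}(r^N,1)$ is the quotient $[(\mathbb{A}^{N+1}\setminus 0)/\mathbb{G}_m]$, where $\mathbb{G}_m$ acts with weights $(r,\ldots,r,1)$; a map from a scheme $T$ into it is the data of a line bundle $L$ on $T$ together with sections $u_0,\ldots,u_{N-1}$ of $L^{\otimes r}$ and a section $v$ of $L$, generating $L$ in the appropriate sense. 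The root stack $\sqrt[r]{(\mathcal{O}(1),a_{2g+2})/\mathbb{P}^N}$, on the other hand, parametrizes for each $T$ a map $T\to\mathbb{P}^N$ together with an $r$-th root of the pulled-back pair $(\mathcal{O}(1),a_{2g+2})$: that is, a line bundle $M$ on $T$ with an isomorphism $M^{\otimes r}\simeq g^*\mathcal{O}(1)$ carrying a chosen section to $g^*a_{2g+2}$.

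Next I would construct the comparison functor in both directions. Given a $T$-point of $\mathcal{P}(r^N,1)$, namely $(L;u_0,\ldots,u_{N-1},v)$, the map $\phi$ sends it to the $\mathbb{P}^N$-point defined by the sections $(u_0,\ldots,u_{N-1},v^{\otimes r})$ of $L^{\otimes r}$; I would then take $M:=L$, which comes equipped with the tautological identification $M^{\otimes r}=L^{\otimes r}=g^*\mathcal{O}(1)$ sending $v^{\otimes r}$ to the pullback of the last coordinate $a_{2g+2}$. This produces a canonical $T$-point of the root stack, and conversely a root datum $(M,\ g^*\mathcal{O}(1)\simeq M^{\otimes r})$ together with the lift $v$ of $g^*a_{2g+2}$ recovers a point of $\mathcal{P}(r^N,1)$. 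I would check that these assignments are mutually inverse and compatible with pullback, hence define an isomorphism of stacks over $\mathbb{P}^N$. The key local check is that over the chart $a_{2g+2}\neq 0$ both stacks are isomorphic to $\mathbb{P}^N$ itself (the root is unique), while over the vanishing locus $a_{2g+2}=0$ both acquire the same $\mu_r$-stabilizer, matching the stated ramification of order $r$.

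Finally, flatness follows formally: root stacks along an effective Cartier divisor on a smooth (indeed, regular) base are flat, since étale-locally the root stack of $(\mathcal{O}(1),s)$ is modeled on $[\Spec(\mathcal{O}[z]/(z^r-s))/\mu_r]$, and $z\mapsto z^r-s$ presents a flat (even finite flat of degree $r$) cover; flatness descends through the $\mu_r$-quotient to the stack, and is preserved under the identification with $\phi$. I expect the main obstacle to be purely bookkeeping: carefully normalizing the weight conventions so that the $r$-th root on the single weight-one coordinate $v$ matches the root of $\mathcal{O}(1)$ along the section $a_{2g+2}$, rather than along some other coordinate, and verifying that the stabilizer groups agree on the nose (not merely abstractly isomorphic) so that the comparison is an isomorphism of stacks and not just a coarse-space bijection. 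Once the universal properties are aligned correctly, the flatness statement is immediate from the standard theory of root stacks.
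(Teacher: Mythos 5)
Your proposal is correct, but it proves the isomorphism by a genuinely different mechanism than the paper. The paper constructs only one direction of your comparison -- the morphism $\psi:\mathcal{P}(r^N,1)\to\sqrt[r]{(\mathcal{O}(1),a_{2g+2})/\mathbb{P}^N}$ induced by the tautological $r$-th root that the weighted projective stack carries -- and then shows $\psi$ is an isomorphism abstractly: it is an isomorphism over $\{a_{2g+2}\neq0\}$, it induces isomorphisms on automorphism groups of geometric points (hence is representable), and it is proper and birational between smooth stacks, whence an isomorphism (implicitly a Zariski-main-theorem-type step: proper quasi-finite representable is finite, and finite birational onto a normal target is an isomorphism); flatness is then simply quoted from Cadman's Corollary 2.3.6. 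You instead verify the universal property on $T$-points in both directions and exhibit an explicit inverse, using $g^*\mathcal{O}(1)=L^{\otimes r}$, $g^*a_{2g+2}=v^{\otimes r}$, and the observation that the sections $(u_0,\ldots,u_{N-1},v^{\otimes r})$ and $(u_0,\ldots,u_{N-1},v)$ have the same common vanishing locus, so the nondegeneracy conditions match. Your route is more self-contained: it never invokes smoothness or normality of the root stack, nor properness of $\psi$, and it makes the weight bookkeeping (the group homomorphism $t\mapsto t^r$ underlying $\phi$) explicit; the paper's route is shorter granted the standard structure theory, but is terse precisely at the ZMT-style step your argument avoids. Two small corrections to your write-up: the flatness of a root stack of a pair $(L,s)$ requires no smoothness of the base and no Cartier hypothesis on $s$, since the local model $\mathcal{O}[z]/(z^r-s)$ is finite free of rank $r$ for \emph{any} section $s$ (this is exactly the content of the cited result, and your descent through the $\mu_r$-torsor is the right justification); and your ``key local check'' over the chart plus the matching of stabilizers is automatic once the two assignments are verified to be mutually inverse and compatible with pullback -- that routine verification, not the chart comparison, is where the actual content of your proof lives (also, over the chart both stacks map isomorphically \emph{onto the chart}, not onto all of $\mathbb{P}^N$).
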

	\begin{proof}
		The last coordinate $s_1$ of $\cP(r^N,1)$ gives a section of $\cO_{\cP(r^N,1)}(1)$, which is an $r$-th square root of $\phi^*(\cO_{\P^N}(1),m_N)$. Therefore, $\phi$ factors as
		\[
		\begin{tikzcd}
			\cP(r^N,1)\arrow[r,"\psi"] & \sqrt[r]{(\cO(1),m_{N})/\mathbb{P}^N}\arrow[r] & \mathbb{P}^N
		\end{tikzcd}
		\]
		and $\psi$ is an isomorphism over the locus where $m_{N}\not=0$, as $\phi$ and the last map are. Now, $\psi$ induces an isomorphism between the automorphism groups of geometric points, hence it is representable. Being also proper and quasi-finite, $\psi$ is finite. As it is also a birational morphism of smooth stacks, $\psi$ is an isomorphism.
		The flatness of $\phi$ follows from~\cite[Corollary 2.3.6]{Cad04}.
	\end{proof}
	\begin{corollary}
		There is a commutative diagram with cartesian squares
		\[
		\begin{tikzcd}[column sep=large]
			\cP(2^{N-1})\arrow[r,hookrightarrow] & \cP(2^{N-1})_\epsilon\arrow[r,hookrightarrow]\arrow[d] & \cP(2^N)_\epsilon\arrow[r,hookrightarrow,"m_N=s_1^2=0"]\arrow[d] & \cP(2^N,1)\arrow[d]\arrow[dd,bend left=60,"\phi"]\\
			& \cP(2^{N-1})\arrow[r,hookrightarrow]\arrow[d] & \cP(2^N)\arrow[r,hookrightarrow]\arrow[d] & \cP(2^{N+1})\arrow[d]\\
			& \mathbb{P}^{N-2}\arrow[r,hookrightarrow,"m_{N-1}=0"] & \mathbb{P}^{N-1}\arrow[r,hookrightarrow,"m_N=0"] & \mathbb{P}^N.
		\end{tikzcd}
		\]
		such that the composite $\cP(2^{N-1})\rightarrow\cP(2^N,1)$ is $b_1$, and $\cP(2^{N})_\epsilon$ \emph{(}respectively, $\cP(2^{N-1})_\epsilon$\emph{)} is a first order thickening of $\cP(2^N)$ \emph{(}respectively, of $\cP(2^{N-1})$\emph{)}.
		
		In particular,
		\[
		{[\cP(2^{N-1})_\epsilon]}=2\in\mathrm{CH}_*(\cP(2^{N-1})_{\epsilon}).
		\]
	\end{corollary}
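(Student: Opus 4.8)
The plan is to realize every object in the diagram either as a $\mu_2$-gerbe over a projective space or as a fibre product, and then to read off the non-reduced structure from explicit equivariant coordinates on the affine cones. I would present the weighted stacks as cones with a $\mathbb{G}_m$-action: write $\mathcal{P}(2^N)=[(\mathbb{A}^N\setminus 0)/\mathbb{G}_m]$ with $\mathbb{G}_m$ of weight $2$, viewed as the space of binary forms of degree $N-1$ with coefficients $c_0,\dots,c_{N-1}$, and similarly $\mathcal{P}(2^{N-1})$, $\mathcal{P}(2^{N+1})$, and $\mathcal{P}(2^N,1)=[(\mathbb{A}^{N+1}\setminus 0)/\mathbb{G}_m]$ with weights $(2^N,1)$ and coordinates $(a_0,\dots,a_{N-1},s)$, representing the degree $N$ form with $a_N=s^2$. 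In these terms the lower horizontal maps are multiplication by $x_0$ (i.e.\ $c\mapsto(c,0)$ on cones), the lower vertical maps $\mathcal{P}(2^{k})\to\mathbb{P}^{k-1}$ are the natural $\mu_2$-gerbes, and the upper right vertical map is $\phi\colon(a_0,\dots,a_{N-1},s)\mapsto(a_0,\dots,a_{N-1},s^2)$, which by Proposition~\ref{prop:rootstack} is the square-root (root stack) map. The two lower squares are cartesian because multiplication by $x_0$ pulls $\mathcal{O}(1)$ back to $\mathcal{O}(1)$, so the gerbe of its square roots pulls back to the corresponding gerbe; I would then \emph{define} $\mathcal{P}(2^N)_\epsilon$ and $\mathcal{P}(2^{N-1})_\epsilon$ to be the fibre products making the two upper squares cartesian, which forces the whole diagram to commute.

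The heart of the argument is the scheme structure of these fibre products. Computing $\mathcal{P}(2^N)_\epsilon=\mathcal{P}(2^N)\times_{\mathcal{P}(2^{N+1})}\mathcal{P}(2^N,1)$ on the cones, a point is a degree $N-1$ form with coefficients $c$ together with $(a,s)$ such that $x_0\cdot(\text{form})=\phi(a,s)$ as a degree $N$ form. Comparing coefficients gives $a_i=c_i$ and, crucially, $s^2=0$: any form divisible by $x_0$ has vanishing coefficient of $x_1^N$, so the section $a_N$ being extracted by $\phi$ pulls back to $0$. Hence the cone of $\mathcal{P}(2^N)_\epsilon$ is $\Spec k[c_0,\dots,c_{N-1},s]/(s^2)$, the first order thickening of the cone of $\mathcal{P}(2^N)$; conceptually this is the base change of the root stack $\sqrt[2]{(\mathcal{O}(1),a_N)/\mathbb{P}^N}$ along the locus where $a_N\equiv 0$, in accordance with Proposition~\ref{prop:rootstack}. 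Pulling back once more along the closed immersion $\mathcal{P}(2^{N-1})\hookrightarrow\mathcal{P}(2^N)$ (cut out by the vanishing of the top coefficient) shows that the cone of $\mathcal{P}(2^{N-1})_\epsilon$ is $\Spec k[c_0,\dots,c_{N-2},s]/(s^2)$, again a first order thickening, which proves the thickening assertions.

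It then remains to verify the factorisation and to compute the class. For the factorisation I would chase $g\in\mathcal{P}(2^{N-1})$ along the top row: the immersion $i$ records $g$ with $s=0$, and the maps $b_1'$ and $b_1''$ (the projections of the two fibre products onto their right-hand factors) successively multiply by $x_0$, so the composite sends $g$ to $(x_0^2 g,0)\in\mathcal{P}(2^N,1)$, which is exactly $b_1(g)$. Since a closed immersion defined by a nilpotent ideal induces an isomorphism on Chow groups, $\mathrm{CH}_*(\mathcal{P}(2^{N-1})_\epsilon)\cong\mathrm{CH}_*(\mathcal{P}(2^{N-1}))$, and the fundamental class is computed at the generic point: the local ring of $\mathcal{P}(2^{N-1})_\epsilon$ there is $\kappa[s]/(s^2)$, of length $2$ over the residue field of $\mathcal{P}(2^{N-1})$, whence $[\mathcal{P}(2^{N-1})_\epsilon]=2[\mathcal{P}(2^{N-1})]=2$.

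I expect the main obstacle to be bookkeeping rather than any deep point: one must check that the $2$-fibre product of the quotient stacks is genuinely presented by the naive equivariant fibre product of cones, so that no extra automorphisms inflate or collapse the length, and that the section extracted by $\phi$ pulls back to \emph{exactly} $0$ rather than to a nonzero square-zero function. This last point is precisely what pins the length to $2$ and produces the factor that corrects Pernice's computation. Proposition~\ref{prop:rootstack} does the conceptual work, since it identifies $\phi$ with an honest root stack whose base change along $\{a_N=0\}$ is manifestly the first order thickening.
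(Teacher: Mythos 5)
Your proof is correct, and at its core it runs on the same mechanism as the paper's: the map $\phi$ extracts a square root of the last coefficient $a_N$ (the paper's $a_{2g+2}$, with $N=2g+2$), and over the image of multiplication by $x_0$ this coefficient pulls back to exactly $0$, so the fibre acquires a square-zero coordinate $s$ with $s^2=0$ and length $2$ at the generic point. The implementation differs: the paper invokes Proposition~\ref{prop:rootstack} to identify $\phi$ with the root stack $\sqrt[2]{(\mathcal{O}(1),a_{2g+2})/\mathbb{P}^N}$ and then quotes the Zariski-local model of root stacks, by which a root stack defined by $(\mathcal{O}_{\mathcal{X}},f)$ is $\left[\Spec_{\mathcal{X}}(\mathcal{O}_{\mathcal{X}}[T]/(T^2-f))/\mu_2\right]$, so that setting $f=0$ exhibits $\mathcal{P}(2^N)_\epsilon$ locally as the asserted thickening; you instead compute the $2$-fibre product globally on the $\mathbb{G}_m$-cones. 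Your one flagged worry is harmless, and it is worth making the resolution explicit: all three stacks in each upper square are presented as quotients of punctured affine cones by the \emph{same} $\mathbb{G}_m$, and both structure maps are equivariant for the identity homomorphism of $\mathbb{G}_m$ (the weight-$1$ coordinate $s$ squares to a weight-$2$ coordinate), so an object of the $2$-fibre product over a test scheme is a single $\mathbb{G}_m$-torsor equipped with two equivariant maps agreeing in the target; hence $[U/\mathbb{G}_m]\times_{[V/\mathbb{G}_m]}[W/\mathbb{G}_m]\simeq[(U\times_V W)/\mathbb{G}_m]$ with no inertia correction. Granting this standard fact, your computation $U\times_V W\simeq(\mathbb{A}^N\setminus 0)\times\Spec k[s]/(s^2)$ gives the first order thickening globally rather than Zariski-locally, and the remaining steps — cartesianness of the lower squares via pullback of root gerbes, the factorisation $b_1''\circ b_1'\circ i=b_1$, nilpotent invariance of (equivariant) Chow groups, and the length-$2$ evaluation of the fundamental class at the generic point — are all sound, and in fact spell out details (the construction of the diagram and the final class computation) that the paper's proof treats as immediate, since it only argues the thickening claim.
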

	\begin{proof}
		The only thing to be proved is that $\cP(2^N)_\epsilon$ is a first order thickening of $\cP(2^N)$. Notice that $\cP(2^N)_\epsilon\rightarrow\mathbb{P}^{N-1}$ is the locus of the root stack $\cP(2^N,1)\rightarrow\P^N$ where $m_N$ vanishes. Recall that when an $n$-root stack over an algebraic stack $\cX$ is defined by $(\cO_{\cX},f)$, where $f$ is a global section of $\cX$, the root stack is isomorphic to the quotient stack of
		\[
		\Spec_{\cX}(\cO_{\cX}[T]/(T^n-f))
		\]
		by the action of $\mu_n$ given by $\zeta\cdot T=\zeta T$. In particular, when $f=0$ and $n=2$, we get
		\[
		\left[\left(\Spec_{\cX}(\cO_{\cX}[T]/(T^2))\right)/\mu_2\right]
		\]
		which is a first order thickening of the root gerbe $\sqrt[2]{\cO_{\cX}/\cX}$; see~\cite[Chapter 10.3]{Ols16} or~\cite[Section 2]{Cad04} for more on root stacks. In particular, $\cP(2^N)_\epsilon$ is Zariski-locally on $\mathbb{P}^{N-1}$ of the form described above.
	\end{proof}
	\begin{corollary}\label{cor:genericisomar}
		Consider the commutative diagram with cartesian square
		\[
		\begin{tikzcd}
			\mathbb{P}^r\times\cP(2^{N-2r},1)\arrow[rd,"\theta"']\arrow[rr,bend left=20,"a_r"]\arrow[r,"a_r'"] & X\arrow[r]\arrow[d,"\psi"] & \cP(2^N,1)\arrow[d,"\phi"]\\
			& \mathbb{P}^r\times\mathbb{P}^{N-2r}\arrow[r,"\pi_r"] & \mathbb{P}^N.
		\end{tikzcd}
		\]
		Then $\phi$, $\theta$, $a_r'$ and $\psi$ are isomorphisms over the open locus where $m_N\not=0$ \emph{(}hence $s_1\not=0$\emph{)}, with all spaces isomorphic to $\mathbb{A}^r\times\mathbb{A}^{N-2r}$.
	\end{corollary}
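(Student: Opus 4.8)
The plan is to deduce everything from Proposition~\ref{prop:rootstack} together with the stability of isomorphisms under base change, so that essentially no new geometric input is needed. First I would treat $\phi$: by Proposition~\ref{prop:rootstack} it realises $\mathcal{P}(2^N,1)$ as the root stack $\sqrt[2]{(\mathcal{O}(1),a_{2g+2})/\mathbb{P}^N}$, and any root stack is canonically isomorphic to its base over the open locus where the defining section is invertible. Hence $\phi$ restricts to an isomorphism over $U:=\{a_{2g+2}\neq0\}$, which is the standard top affine chart $U\cong\mathbb{A}^N$ of $\mathbb{P}^N$. Since the universal square root satisfies $s_1^2=a_{2g+2}$ on $\mathcal{P}(2^N,1)$, the preimage $\phi^{-1}(U)$ is exactly the locus $s_1\neq0$, which is the content of the parenthetical remark.

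Next I would handle $\psi$. By the cartesian square it is the base change of $\phi$ along $\pi_r$, and isomorphisms are stable under base change, so $\psi$ is an isomorphism over $\pi_r^{-1}(U)$. To identify this locus I compute the top coefficient of $\pi_r(f,g)=f^2g$, which is $a_{2g+2}=f(0,1)^2g(0,1)$. Thus $\pi_r^{-1}(U)=\{f(0,1)\neq0\}\times\{g(0,1)\neq0\}$ is the product of top affine charts $\mathbb{A}^r\times\mathbb{A}^{N-2r}$ inside $\mathbb{P}^r\times\mathbb{P}^{N-2r}$; consequently $X$ restricts to $\mathbb{A}^r\times\mathbb{A}^{N-2r}$ and $\psi$ is an isomorphism there.

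For $\theta$ I would first observe that commutativity of the diagram forces $\theta=\mathrm{id}_{\mathbb{P}^r}\times\phi'$, where $\phi':\mathcal{P}(2^{N-2r},1)\to\mathbb{P}^{N-2r}$ is the analogue of $\phi$ sending $(g,s_1)$ to $g$: indeed $\pi_r\circ\theta=\phi\circ a_r$ carries $(f,(g,s_1))$ to $f^2g=\pi_r(f,g)$, and the $\mathbb{P}^r$- and $\mathbb{P}^{N-2r}$-components are thereby pinned down to be $f$ and $g$. Applying Proposition~\ref{prop:rootstack} with $N-2r$ in place of $N$, the map $\phi'$ is an isomorphism over $\{g(0,1)\neq0\}\cong\mathbb{A}^{N-2r}$, so $\theta$ is an isomorphism over $\{f(0,1)\neq0\}\times\{g(0,1)\neq0\}$, which is precisely the preimage of $U$; here the source is again identified with $\mathbb{A}^r\times\mathbb{A}^{N-2r}$. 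Finally, since $\theta=\psi\circ a_r'$ with $\theta$ and $\psi$ both isomorphisms over $U$, the factor $a_r'=\psi^{-1}\circ\theta$ is an isomorphism as well, and all three spaces appearing in the left-hand triangle are thereby identified with $\mathbb{A}^r\times\mathbb{A}^{N-2r}$.

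The argument is formal once Proposition~\ref{prop:rootstack} is in hand; the only point demanding care is the chart bookkeeping, namely verifying that the non-vanishing of the top coefficient of $f^2g$ matches the simultaneous non-vanishing of the top coefficients of $f$ and of $g$, and that over the latter the weighted projective stack $\mathcal{P}(2^{N-2r},1)$ trivialises (again via Proposition~\ref{prop:rootstack}) to the honest affine space $\mathbb{A}^{N-2r}$, leaving no residual gerbe. I do not expect any genuine obstacle beyond this verification.
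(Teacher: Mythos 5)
Your argument is correct and takes essentially the same route as the paper, which states this corollary without separate proof precisely because it follows from Proposition~\ref{prop:rootstack} (root stacks are isomorphic to their base where the section is invertible), stability of isomorphisms under the cartesian base change along $\pi_r$, and the chart computation that the top coefficient of $f^2g$ is $f(0,1)^2g(0,1)$ — all exactly as you do. One cosmetic caveat: commutativity alone does not ``pin down'' $\theta$, since $\pi_r$ is not injective (e.g.\ $(xy)^2\cdot x^2h=(x^2)^2\cdot y^2h$); rather $\theta$ is by construction the map $(f,(g,s_1))\mapsto(f,g)=(\mathrm{id}\times\phi')(f,(g,s_1))$, which is what you then correctly use, so nothing in the proof breaks.
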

	Now that we have a better global picture, we are ready to compute the classes $\alpha_{r,i}$ and $\beta_{r,i}$.
	\subsection{Computation of $\alpha_{r,i}$}
	First we focus on $\alpha_{r,0}$.
	\begin{lemma}\label{lem:alphar0}
		The classes $\alpha_{r,0}$ are contained in the image of $\phi^*\circ\pi_{r*}$.
	\end{lemma}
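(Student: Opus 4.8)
The plan is to run the strategy of Subsection~\ref{subsec:problems} honestly, using the cartesian square of Corollary~\ref{cor:genericisomar}. Write $p\colon X\to\mathcal{P}(2^N,1)$ for the unlabelled top horizontal arrow of that square, so that $a_r=p\circ a_r'$ and $\psi$ is the base change of $\phi$ along $\pi_r$. Since $\phi$ is flat by Proposition~\ref{prop:rootstack}, so is $\psi$, and the refined compatibility of flat pullback with proper pushforward (\cite{Vis98}) applied to the square gives
\[
\phi^*(\pi_r)_*1=p_*\psi^*1 .
\]
On the other hand $\alpha_{r,0}=(a_r)_*1=p_*(a_r')_*1$. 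Hence the lemma will follow once I show that $X$ is integral: for then $\psi^*1=[X]$, and $a_r'$, being proper, representable and (by Corollary~\ref{cor:genericisomar}) an isomorphism over the dense open locus $\{a_{2g+2}\neq0\}$, is birational of degree one, so that $(a_r')_*1=[X]$ as well, whence $\alpha_{r,0}=p_*[X]=\phi^*(\pi_r)_*1$ lies in the image of $\phi^*$.

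The heart of the argument is therefore the integrality of $X$. By Proposition~\ref{prop:rootstack} together with base change along $\pi_r$, the stack $X$ is the square root stack $\sqrt[2]{(\pi_r^*\mathcal{O}(1),\pi_r^*a_{2g+2})/(\mathbb{P}^r\times\mathbb{P}^{N-2r})}$. Since $\pi_r$ sends $(f,g)$ to $f^2g$, the section $\pi_r^*a_{2g+2}$ is the product of the square of the leading coefficient of $f$ and the leading coefficient of $g$; Zariski-locally it has the shape $u^2v$, where $u$ cuts out the pullback from $\mathbb{P}^r$ of $\{a_{2g+2}=0\}$ and $v$ the one from $\mathbb{P}^{N-2r}$. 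Thus $X$ is locally of the form $[\Spec(\mathcal{O}[T]/(T^2-u^2v))/\mu_2]$. The polynomial $T^2-u^2v$ is irreducible over the fraction field of $\mathcal{O}$ because $v$ is not a square there, so each such chart is integral; as these charts all contain the common dense open $\{a_{2g+2}\neq0\}\cong\mathbb{A}^r\times\mathbb{A}^{N-2r}$ of Corollary~\ref{cor:genericisomar}, the stack $X$ is integral. Combined with the fact that the source $\mathbb{P}^r\times\mathcal{P}(2^{N-2r},1)$ is integral (being the product of $\mathbb{P}^r$ with the smooth root stack $\mathcal{P}(2^{N-2r},1)$), this completes the proof along the lines of the first paragraph.

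The hard part is precisely this integrality of $X$, and it is exactly the point at which the original computation must be corrected. It is \emph{not} automatic that $\psi^*1$, equivalently $(a_r')_*1$, equals the fundamental class: the analogous statement genuinely fails for the maps $b_r$, where the last coordinate $s_1$ vanishes identically on the image, the pulled-back section acquires an extra even factor along $\{s_1=0\}$, and the corresponding space $Y$ is non-reduced with fundamental class $[Y]=2$ (compare the Corollary describing $\mathcal{P}(2^{N-1})_\epsilon$). For the $a_r$ maps one lands generically in $\{a_{2g+2}\neq0\}$, the local factor $v$ is a genuine coordinate rather than a unit square, and the non-reducedness does not occur; once the root-stack description of Proposition~\ref{prop:rootstack} makes this transparent, everything else is formal.
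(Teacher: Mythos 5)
Your argument is correct and follows essentially the same route as the paper: its proof of Lemma~\ref{lem:alphar0} likewise deduces $(a_r')_*1=\psi^*1=1$ from Corollary~\ref{cor:genericisomar} and concludes by compatibility of representable proper pushforward with flat pullback applied to the cartesian square. The only difference is that you spell out the integrality of $X$ (via the root-stack description of Proposition~\ref{prop:rootstack}, with local charts $\left[\Spec(\mathcal{O}[T]/(T^2-u^2v))/\mu_2\right]$), a density/reducedness point the paper leaves implicit in citing the Corollary, and your verification correctly isolates why the $a_r$ case avoids the non-reducedness that makes the analogous claim fail for $b_r$.
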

	\begin{proof}
		It follows from Corollary~\ref{cor:genericisomar} that $a'_{r*}(1)=\psi^*(1)=1$, and we conclude by compatibility of representable proper pushforward and flat pullback.
	\end{proof}
	Now we consider $\alpha_{r,i}$ for $0<i\leq r$. We have a commutative diagram
	\[
	\begin{tikzcd}
		\mathbb{P}^{r-1}\times\cP(2^{N-2r},1)\arrow[rd,"\theta'"']\arrow[r,"j"] & \mathbb{P}^r\times\cP(2^{N-2r},1)\arrow[r,"a_r"] & \cP(2^N,1)\\
		& \mathbb{P}^{r-1}\times\cP(2^{N-2r+1})\arrow[ur,"b_r"']
	\end{tikzcd}
	\]
	where $j$ is the inclusion of the hyperplane where the first coordinate of $\mathbb{P}^r$ is 0, and $\theta'(f,(g,s_1))=(f,g)$. Notice that the $T$-equivariant class of that hyperplane is the sum of $\xi_r$ and the first Chern class of a character of $T$, thanks to Lemma 2.4 of~\cite{EF09}. This shows that $\alpha_{r,1}$ is in the ideal generated by $\alpha_{r,0}$ and the image of $b_{r-1*}$. We inductively obtain the same result for all $\alpha_{r,i}$, proving the following Lemma.
	\begin{lemma}\label{lem:alpha>=1}
		For $0<i\leq r$, the class $\alpha_{r,i}$ is contained in the ideal generated by $\alpha_{r,0}$ and the image of $b_{r-1*}$.
	\end{lemma}
	We are left with computing $\beta_{r,i}$'s.
	\subsection{Computation of $\beta_{r,i}$}
	For all $1<r\leq N/2$, consider the commutative diagram
	\[
	\begin{tikzcd}
		\mathbb{P}^{r-1}\times\cP(2^{N-2r+1})\arrow[d,"b_r'"']\arrow[r,"b_r"] & \cP(2^N,1)\\
		\cP(2^{N-1})\arrow[ru,"b_1"']
	\end{tikzcd}
	\]
	where $b_r'$ sends $(f,g)$ to $f^2g$. Therefore, it is enough to compute the image of $b_{1*}$.
	
	Notice that
	\[
	\begin{tikzcd}
		b_1^*:\mathrm{CH}_T^*(\cP(2^N,1))\arrow[r] & \mathrm{CH}_T^*(\cP(2^{N-1}))
	\end{tikzcd}
	\]
	is surjective, as both Chow rings are generated by $c_1^{T}(\cO(1))$ as $\CH(BT)$-algebras. By the projection formula it follows that the image of $b_{1*}$ is the ideal generated by $b_{1*}(1)=\beta_{1,0}$. We are left with computing $\beta_{1,0}$.
	\begin{lemma}
		We have
		\[
		\beta_{1,0}=[(m_{N-1}=0)]\cdot[(s_1=0)]\in\mathrm{CH}_T^2(\cP(2^N,1)).
		\]
	\end{lemma}
	\begin{proof}
		It holds essentially by definition.
	\end{proof}
	\begin{remark}\label{rmk:projectivebundlerelationHg1}
		In particular, in the setting of Subsection~\ref{subsec:problems}, this shows that indeed $\psi'^*(1)\not=1$, and diagram~\eqref{eq: diag br} together with $\phi^*([m_N=0])=2[s_1=0]$ gives
		\[
			\phi^*(\pi'_{1*}(1))=\phi^*([(m_N=0)]\cdot[(m_{N-1}=0)])=2\beta_{1,0}.
		\]
	\end{remark}
	\subsection{Final computation}
	Putting everything together, we have proved that the relations are generated by those coming from $\cH_g$ and
	\[
	\beta_{1,0}=[(m_{N-1}=0)]\cdot[(s_1=0)]\in\mathrm{CH}_T^2(\cP(2^N,1)),
	\]
	together with the relation in $\mathrm{CH}^*_T(\cP(2^N,1))$ coming from the projective bundle formula. However, this last relation is superfluous, as it is given by the product of the $T$-equivariant classes of the $N+1$ coordinate hyperplanes, and $\beta_{1,0}$ is the product of two of those.
	
	By computing $\beta_{1,0}$ using~\cite[Lemma 2.4]{EF09} we get the following Theorem, where we use a similar notation as~\cite[Corollary 1.3]{Per22} to facilitate the comparison. We identify $T$ with $\Gm\times\Gm$ via the isomorphisms of~\eqref{eq: isom B2}, which depend on the parity of $g$; in both cases, we denote by $t_1$, $t_2$ the first Chern class of the two characters given by the two projections $T\rightarrow\Gm$.
	\begin{theorem}\label{thm:ChowHg1}
		Let $\charac k>2g$. If $g$ is even, then
		\[
		\mathrm{CH}^*(\cH_{g,1})\simeq\frac{\mathbb{Z}[t_1,t_2]}{((4g+2)(t_1+t_2),\frac{g(g-1)}{2}(t_1^2+t_2^2)-g(g+3)t_1t_2+(2g+1)(t_1+t_2)t_2)}.
		\]
		If $g$ is odd, then
		\[
		\mathrm{CH}^*(\cH_{g,1})\simeq\frac{\mathbb{Z}[t_1,t_2]}{((8g+4)t_1,2t_1^2+\frac{g(g+1)}{2}t_2^2-(2g+1)t_1t_2)}.
		\]
	\end{theorem}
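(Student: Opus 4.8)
The final Theorem~\ref{thm:ChowHg1} is the translation of the abstract relation computation into explicit generators $t_0,t_1$ (even case) or $\tau,\rho$ (odd case). By the work in the preceding subsections, I already know that $\mathrm{CH}^*(\mathcal{H}_{g,1})$ is the quotient of the $(\mathbb{G}_m\times\mathbb{G}_m)$-equivariant Chow ring of $\mathcal{P}(2^N,1)$ by the relations coming from $\mathcal{H}_g$ together with the single new relation $\beta_{1,0}=(s_1=0)\cdot(a_{2g+1}=0)$. So the proof of this Theorem is purely a bookkeeping computation: I must express all of these classes in terms of the two characters $t_1,t_2$ of $\mathbb{G}_m\times\mathbb{G}_m$ (under the chosen isomorphism $\varphi$ and its parity-dependent refinements $\varphi_{\text{even}},\varphi_{\text{odd}}$ from~\eqref{eq:isogroup}), and then perform the change of variables into the named generators $t_0,t_1$ resp.\ $\tau,\rho$.

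\textbf{Step 1: set up the equivariant Chow ring of the weighted projective stack.} First I would recall that $\mathrm{CH}^*_{\mathbb{G}_m\times\mathbb{G}_m}(\mathcal{P}(2^N,1))$ is a polynomial ring in $t_1,t_2$ and the first Chern class $\xi$ of the tautological bundle, modulo the relation that is the product of the equivariant classes of the $N+1$ coordinate hyperplanes. Following the conventions in the Remark after~\eqref{eq:isogroup}, the first $N=2g+2$ coordinates $a_0,\dots,a_{2g+1}$ have weights $(-2,\dots,-2)$ for the second $\mathbb{G}_m$-factor and $(-(2g+2),\dots,-1)$ for the first, while the section variable $s_1$ carries the remaining weight. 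The class $\xi$ is identified with a linear expression in $t_1,t_2$ dictated by the weight of $s_1$.

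\textbf{Step 2: compute $\beta_{1,0}$ and the $\mathcal{H}_g$-relations as polynomials in $t_1,t_2$.} The class $(a_{2g+1}=0)$ is the equivariant first Chern class of the corresponding coordinate line bundle, a linear form in $t_1,t_2$; similarly $(s_1=0)$ is linear. Their product is the degree-$2$ relation $\beta_{1,0}$. The relations inherited from $\mathcal{H}_g$ are, by the parity-independent presentation highlighted in the Remark following~\eqref{eq:isogroup}, the degree-$1$ relation $(4g+2)(\cdots)$ and ``half'' of the generating degree-$2$ relation of Di Lorenzo, both of which I would read off from Proposition~\ref{prop:presentationHg} after applying $\varphi$. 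I expect the degree-$1$ relation to become $(4g+2)(t_0+t_1)$ (even) and $(8g+4)\tau$ (odd) after the substitution.

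\textbf{Step 3: change variables and reconcile the parities.} The final and most delicate step is the linear substitution into Pernice's named generators $t_0,t_1$ (even) and $\tau,\rho$ (odd), carried out via $\varphi_{\text{even}}$ and $\varphi_{\text{odd}}$. The main obstacle is purely computational care: I must verify that after substitution the two degree-$2$ relations (the one from $\mathcal{H}_g$ and the new $\beta_{1,0}$) combine to give exactly the stated quadratic, and in particular confirm the claim that the new relation is ``half'' of Pernice's in the even case and ``one fourth'' in the odd case. The odd case is trickier because $\mathrm{GL}_2/\mu_{g+1}\simeq\mathbb{G}_m\times\mathrm{PGL}_2$ is not special, so the reduction-to-the-torus map is non-injective; I would rely on the observation (already made in the Remark after~\eqref{eq:isogroup}) that ``half'' of Di Lorenzo's relation already holds $T$-equivariantly, so that working with $T$-equivariant classes captures exactly the right relations despite the loss of injectivity. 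Once all classes are written in $\tau,\rho$, collecting terms yields $2\tau^2+\tfrac{g(g+1)}{2}\rho^2-(2g+1)\tau\rho$, completing the proof.
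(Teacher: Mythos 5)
Your proposal is correct and follows essentially the same route as the paper: the paper's proof of Theorem~\ref{thm:ChowHg1} is precisely the bookkeeping step you describe — given the preceding subsections showing the relations are generated by those coming from $\mathcal{H}_g$ together with $\beta_{1,0}=(s_1=0)\cdot(a_{2g+1}=0)$, one writes these classes in the characters of $\mathbb{G}_m\times\mathbb{G}_m$ and changes variables via $\varphi_{\text{even}}$ and $\varphi_{\text{odd}}$, confirming the ``half''/``one fourth'' comparison with Pernice's relation exactly as in your Steps 1--3. One minor precision: for $\mathcal{H}_{g,1}$ itself the group $\mathrm{B}_2/\mu_{g+1}$ is special, so the torus reduction loses nothing there; the non-injectivity you invoke in the odd case only concerns the comparison of the $T$-equivariant presentation with Di Lorenzo's $\mathrm{PGL}_2$-based computation of $\mathrm{CH}^*(\mathcal{H}_g)$, which is exactly how the paper's remark following~\eqref{eq:isogroup} uses it.
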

	\begin{remark}
		Notice that, for $g$ even, doubling the degree 2 relation and using the degree 1 relation we get back the degree 2 relation described in~\cite{Per22}. When $g$ is odd, we need to multiply by 4.
	\end{remark}
	\subsection{Change of basis}
	We know that $[\cW_{g,1}^1]$ and $\psi_1$ generate the Chow ring, so we write those classes in terms of the generators $t_1$ and $t_2$, so that we can rewrite the above isomorphisms in terms of the classes we are interested in.
	\begin{lemma}\label{lem:conversionbasisHg1}
		With the notation of Theorem~\ref{thm:ChowHg1}, we have the following. If $g$ is even, then
		\begin{align*}
			{[\cW_{g,1}^1]}=-\frac{g}{2}t_1+\left(\frac{g}{2}+1\right)t_2,\qquad\psi_1=-\left(\frac{g}{2}-1\right)t_1+\frac{g}{2}t_2.
		\end{align*}
		If $g$ is odd, then
		\begin{align*}
			{[\cW_{g,1}^1]}=t_1-\frac{g+1}{2}t_2,\qquad\psi_1=t_1-\frac{g-1}{2}t_2.
		\end{align*}
	\end{lemma}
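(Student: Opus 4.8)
\emph{The plan.} I would regard both $\psi_1$ and $[\mathcal{W}_{g,1}^1]$ as characters of the maximal torus $T=(\mathbb{G}_m\times\mathbb{G}_m)/\mu_{g+1}$ of $\mathrm{B}_2/\mu_{g+1}$, whose coordinates $a,c$ are the diagonal entries appearing in Proposition~\ref{prop:presentationHg1Hg2far}, and observe that the generators $t_0,t_1$ (resp.\ $\tau,\rho$) of Theorem~\ref{thm:ChowHg1} are precisely the coordinate characters of the target torus of the chain~\eqref{eq:isogroup}. Since $\mathrm{CH}^1(\mathcal{H}_{g,1})=\mathrm{Pic}(\mathcal{H}_{g,1})$ is the quotient of the character lattice $\widehat{T}\simeq\mathbb{Z}^2$ by the single degree-$1$ relation, the lemma reduces to an explicit change of basis inside $\widehat{T}$.

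\emph{Identifying the two classes as characters.} The divisor $\mathcal{W}_{g,1}^1$ is cut out by $s_1=0$, because the relation $f(0,1)=s_1^2$ forces the marked point to land in the ramification locus exactly when $s_1$ vanishes. By Proposition~\ref{prop:presentationHg1Hg2far} the group acts on the axis $s_1$ through $c^{-(g+1)}$, so the line bundle admitting $s_1$ as a section has first Chern class equal to the character $c^{g+1}$ by which $s_1$ transforms as a function; thus $[\mathcal{W}_{g,1}^1]=c^{g+1}$ in $\widehat{T}$. For $\psi_1$ I would reuse the computation inside the proof of Proposition~\ref{prop:classpsi1}: there the section $(dx_0\wedge dy)/s_1^2$ of weight $a^{-1}c^{-g}$ trivializes the \emph{dual} of the relevant line, so that $\psi_1$, being the cotangent direction, corresponds to the opposite character $ac^{g}$.

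\emph{Identifying the generators and concluding.} Composing $\varphi(a,c)=(ac^{-1},c^{g+1})$ with $\varphi_{\mathrm{even}}$ (resp.\ $\varphi_{\mathrm{odd}}$) and pulling back the two coordinate characters of the target, one finds that, as characters of $T$, $t_0=a^{g/2+1}c^{g/2}$ and $t_1=a^{g/2}c^{g/2+1}$ in the even case, while $\tau=a^{(g+1)/2}c^{(g+1)/2}$ and $\rho=ac^{-1}$ in the odd case. Inverting these linear relations over $\mathbb{Z}$ and substituting the characters $ac^{g}$ and $c^{g+1}$ found above yields the four stated expressions; the coefficients $g/2$ and $(g\pm1)/2$ are integral precisely because of the parity hypothesis on $g$.

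\emph{The main obstacle.} The genuine difficulty is entirely the sign and duality bookkeeping: one must fix, once and for all, the convention identifying the character of a semi-invariant generating section with a first Chern class, and then apply it uniformly both to $s_1$ and to the differential form of Proposition~\ref{prop:classpsi1}. The appearance of the \emph{dual} sheaf in that latter computation is exactly what turns $a^{-1}c^{-g}$ into $ac^{g}$, and tracking this correctly is what separates the intended answer from its negative. Once the conventions are pinned down the remaining computation is immediate, and I would cross-check the output against Lemma~\ref{lem:classW} and Proposition~\ref{prop:classpsi1}, whose values in the basis of~\cite[Theorem 3.2]{Lan23} must agree with the present ones.
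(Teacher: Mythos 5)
Your proposal is correct, and it reaches the stated formulas by a route that is only superficially different from the paper's. The paper's proof is a one-line conversion: it takes the coordinates of $[\mathcal{W}_{g,1}^1]$ and $\psi_1$ in the presentation of $\mathrm{Pic}(\mathcal{H}_{g,1})$ from Lemma~\ref{lem:classW} and Proposition~\ref{prop:classpsi1} (expressed in the conventions of~\cite[Theorem 3.2]{Lan23}) and rewrites them in the basis $t_0,t_1$ (resp.\ $\tau,\rho$) via~\cite[Proposition 1.7]{Per22}. You instead work intrinsically in the character lattice $\widehat{T}$ of the maximal torus: you identify $[\mathcal{W}_{g,1}^1]$ with $c^{g+1}$ directly from the locus $\{s_1=0\}$ and the weight $c^{-(g+1)}$ in Proposition~\ref{prop:presentationHg1Hg2far}, extract $\psi_1=ac^{g}$ from the internal computation of Proposition~\ref{prop:classpsi1}, and then pull back and invert the coordinate characters through the chain~\eqref{eq:isogroup}. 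This is exactly the computation the paper itself performs immediately after the Lemma when introducing the basis $l_1=ac^{-1}$, $l_2=c^{g+1}$ for Corollary~\ref{cor:newbase} (where $[\mathcal{W}_{g,1}^1]=l_2$ and $\psi_1=l_1+l_2$, matching your characters), so your route is validated by the paper's own text; its advantage is that it bypasses the external coordinate conventions of~\cite[Theorem 3.2]{Lan23} and~\cite[Proposition 1.7]{Per22} entirely. I verified your linear algebra: in additive notation $t_0=(\frac{g}{2}+1)\hat{a}+\frac{g}{2}\hat{c}$, $t_1=\frac{g}{2}\hat{a}+(\frac{g}{2}+1)\hat{c}$, $\tau=\frac{g+1}{2}(\hat{a}+\hat{c})$, $\rho=\hat{a}-\hat{c}$, and the four stated identities hold \emph{exactly} in $\widehat{T}$, not merely modulo the degree-$1$ relation, which is the stronger form needed for substitutions in Theorem~\ref{thm:ChowHg1}.

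Two small cautions, both of which you essentially flag yourself. First, the character $a^{-1}c^{-g}$ in the proof of Proposition~\ref{prop:classpsi1} is attached to a generating section of the determinant of the scheme-level restricted relative differentials, so importing it as ``the weight of (the dual of) $\psi_1$'' requires adopting the paper's duality convention; your uniform-convention principle handles this, and the final formulas are in fact insensitive to a global flip since it would negate $[\mathcal{W}_{g,1}^1]$, $\psi_1$, and the $t$-characters simultaneously. Second, a useful independent sign check beyond the cross-references you mention: with your values, $(4g+2)((g+1)\psi_1-(g-1)[\mathcal{W}_{g,1}^1])=(4g+2)(g+1)(\hat{a}+\hat{c})$, which is precisely the class of the discriminant $\Delta_{g,1}$ (an invariant of weight $\det^{(2g+2)(2g+1)}$), consistent with the degree-$1$ relation of Proposition~\ref{prop:generators12}; this confirms the conventions are pinned down correctly.
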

	\begin{proof}
		It is a straightforward computation using Lemmas~\ref{lem:classW} and~\ref{prop:classpsi1}, following the change of basis in Remark~\ref{rmk: explicit character basis Picard} and the isomorphisms~\eqref{eq: isom B2}.
	\end{proof}
	Making this change of variables, we get the following result.
	\begin{corollary}\label{cor:ChowHg1geometricbase}
		Let $g\geq2$ and $\mathrm{char}k>2g$. Then,
		\[
		\mathrm{CH}^*(\cH_{g,1})\simeq\frac{\mathbb{Z}[\psi_1,[\cW_{g,n}^1]]}{((4g+2)((g+1)\psi_1-(g-1)[\cW_{g,1}^1]),[\cW_{g,1}^1]^2+\psi_1\cdot[\cW_{g,1}^1])}
		\]			
		regardless of the parity of $g$.
	\end{corollary}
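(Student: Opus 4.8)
The plan is to deduce the Corollary directly from Theorem~\ref{thm:ChowHg1} together with the explicit conversion formulas of Lemma~\ref{lem:conversionbasisHg1}, treating the two parities of $g$ in parallel. In both cases Theorem~\ref{thm:ChowHg1} presents $\mathrm{CH}^*(\mathcal{H}_{g,1})$ as a quotient of a polynomial ring in two degree-one variables ($t_0,t_1$ for $g$ even, $\tau,\rho$ for $g$ odd), and Lemma~\ref{lem:conversionbasisHg1} writes $\psi_1$ and $[\mathcal{W}_{g,1}^1]$ as explicit $\mathbb{Z}$-linear combinations of these variables. The whole statement therefore reduces to a linear change of variables inside the presenting polynomial ring, followed by a substitution into the two defining relations.

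First I would check that the change of variables is unimodular, so that $\psi_1$ and $[\mathcal{W}_{g,1}^1]$ are again polynomial generators over $\mathbb{Z}$. In the even case the conversion matrix is
\[
\begin{bmatrix} -g/2 & g/2+1 \\ -(g/2-1) & g/2 \end{bmatrix},
\]
whose determinant is $-1$; in the odd case the matrix is
\[
\begin{bmatrix} 1 & -(g+1)/2 \\ 1 & -(g-1)/2 \end{bmatrix},
\]
with determinant $+1$ (the half-integers occurring are genuine integers under the parity assumption). Since each determinant is a unit in $\mathbb{Z}$, the induced substitution is an isomorphism of graded polynomial rings $\mathbb{Z}[t_0,t_1]\simeq\mathbb{Z}[\psi_1,[\mathcal{W}_{g,1}^1]]$ (respectively from $\mathbb{Z}[\tau,\rho]$), and it then suffices to verify that this isomorphism carries the two generating relations of Theorem~\ref{thm:ChowHg1} onto the two relations in the statement of the Corollary.

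Next I would substitute. For the degree-one relation a short computation gives $(g+1)\psi_1-(g-1)[\mathcal{W}_{g,1}^1]=t_0+t_1$ in the even case and $=2\tau$ in the odd case; multiplying by $4g+2$ recovers $(4g+2)(t_0+t_1)$ and $(8g+4)\tau$ respectively, which are exactly the degree-one relations of Theorem~\ref{thm:ChowHg1}. For the degree-two relation I would expand $[\mathcal{W}_{g,1}^1]^2+\psi_1\cdot[\mathcal{W}_{g,1}^1]=[\mathcal{W}_{g,1}^1]\bigl([\mathcal{W}_{g,1}^1]+\psi_1\bigr)$, using $[\mathcal{W}_{g,1}^1]+\psi_1=-(g-1)t_0+(g+1)t_1$ (even) or $=2\tau-g\rho$ (odd). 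Expanding the product, the coefficients of $t_0^2,t_0t_1,t_1^2$ (respectively of $\tau^2,\tau\rho,\rho^2$) are checked to agree term by term with the quadratic relation of Theorem~\ref{thm:ChowHg1}.

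Because this is a routine substitution, the statement carries little genuine difficulty; the only point requiring care is that the degree-two relations must match on the nose, not merely modulo the ideal generated by the degree-one relation, and that the conversion matrices be unimodular so that no spurious relation is introduced and no generator is lost. Both are confirmed by the determinant computation and by the term-by-term comparison above, so the two presentations---parity-dependent in Theorem~\ref{thm:ChowHg1}, uniform in the Corollary---define the same quotient ring.
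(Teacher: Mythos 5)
Your proposal is correct and is precisely the paper's own argument: the paper deduces the Corollary from Theorem~\ref{thm:ChowHg1} by applying the base change of Lemma~\ref{lem:conversionbasisHg1}, exactly the substitution you carry out. Your verifications check out in both parities (determinants $-1$ and $+1$; $(g+1)\psi_1-(g-1)[\mathcal{W}_{g,1}^1]$ equals $t_0+t_1$, respectively $2\tau$; and $[\mathcal{W}_{g,1}^1]([\mathcal{W}_{g,1}^1]+\psi_1)$ reproduces the quadratic relations on the nose), with the unimodularity check being a detail the paper leaves implicit.
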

	There is another useful basis that we can consider. Notice that, depending on $g$, there are isomorphisms
	\begin{equation}\label{eq: iso GmxGm}
	\begin{tikzcd}
		(\mathbb{G}_m\times\mathbb{G}_m)/\mu_{g+1}\arrow[r,"\varphi"] & \mathbb{G}_m\times\mathbb{G}_m\arrow[r,"\varphi_{\text{even}}"]\arrow[d,"\varphi_{\text{odd}}"] & \mathbb{G}_m\times\mathbb{G}_m\\
		& \mathbb{G}_m\times\mathbb{G}_m
	\end{tikzcd}
	\end{equation}
	where
	\[
	\varphi(a,c)=(ac^{-1},c^{g+1}),\ \varphi_{\text{even}}(\lambda,\mu)=(\lambda^{g/2+1}\mu,\lambda^{g/2}\mu),\ \varphi_{\text{odd}}(\lambda,\mu)=(\lambda^{(g+1)/2}\mu,\lambda).
	\]
	The composites $\varphi_{\text{even}}\circ\varphi$ and $\varphi_{\text{odd}}\circ\varphi$ are obtained by restriction from~\eqref{eq: isom B2} and~\eqref{eq: isom GL2}.
	
	Let $l_1$ and $l_2$ be the two characters of the copy of $\mathbb{G}_m\times\mathbb{G}_m$ in the middle given by the two projections. Then, $\varphi_{\text{even}}$ and $\varphi_{\text{odd}}$ induce the two change of basis
	\begin{align*}
		t_1=\left(\frac{g}{2}+1\right)l_1+l_2,\qquad t_2=\frac{g}{2}l_1+l_2,
	\end{align*}
	when $g$ is even, and
	\begin{align*}
		t_1=\frac{g+1}{2}l_1+l_2,\qquad t_2=l_1,
	\end{align*}
	when $g$ is odd, and the equalities
	\begin{align*}
		[\cW_{g,1}^1]=l_2,\qquad\psi_1=l_1+l_2.
	\end{align*}
	This allows us to rewrite everything in another symmetric way, which again highlights the fact that the presentation does not depend on the genus, even though when seen abstractly the two rings appear different, in particular the Picard group.
	\begin{corollary}\label{cor:newbase}
		Using the change of basis above, we have
		\begin{equation*}
			\mathrm{CH}^*(\cH_{g,1})\simeq\frac{\mathbb{Z}[l_1,l_2]}{((4g+2)((g+1)l_1+2l_2),2l_2^2+l_1l_2)}.
		\end{equation*}
	\end{corollary}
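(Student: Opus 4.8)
The plan is to recognize Corollary \ref{cor:newbase} as a purely formal consequence of Theorem \ref{thm:ChowHg1} via the explicit linear change of variables recorded just above its statement. First I would justify those base changes: writing $l_1,l_2$ for the characters of the middle copy of $\mathbb{G}_m\times\mathbb{G}_m$ in \eqref{eq:isogroup}, the coordinate characters $t_0,t_1$ (even case) and $\tau,\rho$ (odd case) of the two outer tori pull back along $\varphi_{\text{even}}$ and $\varphi_{\text{odd}}$ to
\[
t_0=(\tfrac{g}{2}+1)l_1+l_2,\quad t_1=\tfrac{g}{2}l_1+l_2,\qquad \tau=\tfrac{g+1}{2}l_1+l_2,\quad \rho=l_1,
\]
which are exactly the stated relations. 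In either parity the corresponding $2\times 2$ integer matrix has determinant $\pm1$, so the substitution is an isomorphism of polynomial rings $\mathbb{Z}[t_0,t_1]\cong\mathbb{Z}[l_1,l_2]$ and $\mathbb{Z}[\tau,\rho]\cong\mathbb{Z}[l_1,l_2]$, and hence carries the ideal of relations of Theorem \ref{thm:ChowHg1} isomorphically onto the ideal generated by the images of its two generators.

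Next I would carry out the substitution on each generator. For the linear relation this is immediate: in the even case $t_0+t_1=(g+1)l_1+2l_2$, so $(4g+2)(t_0+t_1)$ becomes $(4g+2)((g+1)l_1+2l_2)$; in the odd case, writing $g=2B-1$, one checks $(8g+4)\tau=4(4B-1)(Bl_1+l_2)=(4g+2)((g+1)l_1+2l_2)$, the same class. For the degree-$2$ relation I would expand $t_0^2+t_1^2$, $t_0t_1$, $(t_0+t_1)t_1$ (respectively $\tau^2$, $\rho^2$, $\tau\rho$) in the $l_i$ and collect coefficients: in both parities the coefficient of $l_1^2$ vanishes identically, the coefficient of $l_1l_2$ equals $1$, and the coefficient of $l_2^2$ equals $2$, so each degree-$2$ relation becomes exactly $2l_2^2+l_1l_2$.

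The computation is elementary, and the only point requiring a little care is that the two visibly different degree-$2$ relations of Theorem \ref{thm:ChowHg1}---one a genus-dependent quadratic in $t_0,t_1$ and the other in $\tau,\rho$---must both collapse to the single genus-independent polynomial $2l_2^2+l_1l_2$. This is precisely the uniformity phenomenon that the corollary is meant to exhibit, and it is what makes the $l_i$ coordinates natural. I would emphasize that the collapse is \emph{exact}, with no reduction modulo the linear relation needed, confirming that the presentation is genuinely identical in both parities even though the abstract rings (and in particular their Picard groups) differ. Finally, together with the identities $[\mathcal{W}_{g,1}^1]=l_2$ and $\psi_1=l_1+l_2$, the $l_i$-presentation immediately re-derives Corollary \ref{cor:ChowHg1geometricbase}, which serves as a cross-check on the sign and coefficient bookkeeping.
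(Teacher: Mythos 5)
Your proposal is correct and is essentially the paper's own (implicit) argument: the paper states Corollary~\ref{cor:newbase} as an immediate consequence of substituting the displayed base changes $t_0=(\frac{g}{2}+1)l_1+l_2$, $t_1=\frac{g}{2}l_1+l_2$, $\tau=\frac{g+1}{2}l_1+l_2$, $\rho=l_1$ into the presentations of Theorem~\ref{thm:ChowHg1}, exactly as you do, and your verification that the change of variables is unimodular and that both degree-$2$ relations collapse exactly to $2l_2^2+l_1l_2$ checks out. Your cross-check via $[\mathcal{W}_{g,1}^1]=l_2$, $\psi_1=l_1+l_2$ against Corollary~\ref{cor:ChowHg1geometricbase} is likewise consistent with the paper.
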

	\section{The Chow ring of $\cH_{g,2}^{\far}$}\label{sec: Chow Hg2far}
	In this section, we compute the Chow ring of $\cH_{g,2}^{\far}$. To do so, we follow the strategy used for the computation of the Chow ring of $\cH_{g,1}$.
	
	Again, we set $N=2g+2$. Recall that
	\[
	\cH_{g,2}^{\far}\simeq\left[\frac{\mathbb{A}^{N+1}\setminus\Delta_{g,2}}{(\mathbb{G}_m\times\mathbb{G}_m)/\mu_{N/2}}\right]
	\]
	where the action has weights
	\[
	\begin{pmatrix}
		-N & -(N-1) & \ldots & -1 & 0 & 0 & -N/2\\
		0 & -1 & \ldots & -(N-1) & -N & -N/2 & 0
	\end{pmatrix}.
	\]
	Here, we interpret $\mathbb{A}^{N+1}$ as in Notation~\ref{not: notation n=1,2}, for $n=2$.
	
	Again, we pass to the projective setting by considering the quotient of $\mathbb{A}^{N+1}$ by the action of $\mathbb{G}_m$ with weights $(2,\ldots,2,1,1)$, hence obtaining $\cP(2^{N-1},1,1)$ and its closed subscheme $\overline{\Delta}_{g,2}$, which is the image of $\Delta_{g,2}$.
	
	Now, we construct morphisms that together will form a Chow envelope of $\overline{\Delta}_{g,2}$. What each affine space parametrizes is explained by the description on the right of the corresponding map; for instance, if we write $(g,s_1)$ this means $g(0,1)=s_1^2$, while $(g,s_2)$ means that $g(1,0)=s_2^2$.
	For every $1\leq r\leq N/2$, we define
	\[
	\begin{tikzcd}[column sep=small, row sep=tiny]
		\widetilde{a}_r:\mathbb{A}^{r+1}\times\mathbb{A}^{N+1-2r}\arrow[rr] && \mathbb{A}^{N+1}, & (f,(g,s_1,s_2))\mapsto(f^2g,f(0,1)s_1,f(1,0)s_2),\\
		\widetilde{b}_r:\mathbb{A}^r\times\mathbb{A}^{N+1-2r}\arrow[rr] && \mathbb{A}^{N+1}, & (f,(g,s_2))\mapsto(x_0^2f^2g,0,f(1,0)s_2),\\
		\widetilde{c}_r:\mathbb{A}^r\times\mathbb{A}^{N+1-2r}\arrow[rr] && \mathbb{A}^{N+1}, & (f,(g,s_1))\mapsto(x_1^2f^2g,f(0,1)s_1,0),
	\end{tikzcd}
	\]
	and for $2\leq r\leq N/2$ we define
	\[
	\begin{tikzcd}[column sep=small]
		\widetilde{d}_r:\mathbb{A}^{r-1}\times\mathbb{A}^{N+1-2r}\arrow[rr] && \mathbb{A}^{N+1}, & (f,g)\mapsto(x_0^2x_1^2f^2g,0,0).
	\end{tikzcd}
	\]
	\begin{remark}
		It is necessary to consider all these maps in order to get a Chow envelope, see~\cite[Remark 3.8]{Per22}.
	\end{remark}
	Now, we define an action of $(\mathbb{G}_m\times\mathbb{G}_m)/\mu_{g+1}$ on the source of the maps above, so that they become $(\mathbb{G}_m\times\mathbb{G}_m)/\mu_{g+1}$-equivariant.
	Then, we pass to the projective setting. First, we identify $(\mathbb{G}_m\times\mathbb{G}_m)/\mu_{g+1}$ with $T:=\mathbb{G}_m\times\mathbb{G}_m$ via the isomorphism $\varphi$ in~\eqref{eq: iso GmxGm}, so that we do not have to worry about the parity of $g$.
	
	Given elements $(a,c)\in T$, the action on the source $\mathbb{A}^{r+1}\times\mathbb{A}^{N+1-2r}$ of $\widetilde{a}_r$ is defined as
	\[
	(a,c)\cdot(f(x,y),(g(x,y),s_1,s_2))=(f(a^{-1}x,y),(c^{-2}g(a^{-1}x,y),c^{-1}s_1,a^{r-(g+1)}c^{-1}s_2)).
	\]
	The definition of the other actions is analogous. For the sake of completeness, we describe them for $\widetilde{b}_r$, $\widetilde{c}_r$ and $\widetilde{d}_r$ in this order:
	\begin{align*}
		(a,c)\cdot(f(x,y),(g(x,y),s_2))&=(f(a^{-1}x,y),(a^{-2}c^{-2}g(a^{-1}x,y),a^{r-1-(g+1)}c^{-1}s_2))\\
		(a,c)\cdot(f(x,y),(g(x,y),s_1))&=(f(a^{-1}x,y),(c^{-2}g(a^{-1}x,y),c^{-1}s_1))\\
		(a,c)\cdot(f(x,y),g(x,y))&=(f(a^{-1}x,y),a^{-2}c^{-2}g(a^{-1}x,y)).
	\end{align*}
	It is a straightforward computation to check that the maps just defined are equivariant with respect to these actions. Notice that the first component of $T$ acts trivially on the first factor of each source.
	
	Now, we pass to the projective setting; to do so, we define another action of $\Gm\times\Gm$ on each source. The first copy of $\Gm$ acts only on the first factor of each source, in the standard way. In the case of $\widetilde{a}_r$, the second copy of $\Gm$ acts on $\mathbb{A}^{N+1-2r}$ by $\mu\cdot(f,s_1,s_2)=(\mu^2f,\mu s_1,\mu s_2)$; the definition is similar for the other maps. Consider the morphism of group schemes
	\begin{equation}\label{eq: morphism groups}
	\begin{tikzcd}
		\mathbb{G}_m\times\Gm\times T\arrow[r] & T, & (\lambda,\mu,(a,c))\mapsto(a,\lambda^{-1}\mu^{-1}c).
	\end{tikzcd}
	\end{equation}
	It is again a straightforward calculation to show that the maps defined are equivariant with respect to~\eqref{eq: morphism groups}, and induce $T$-equivariant maps
	\[
	\begin{tikzcd}[row sep=tiny]
		a_r:\mathbb{P}^r\times\cP(2^{N-1-2r},1,1)\arrow[r] & \cP(2^{N-1},1,1),\\
		b_r:\mathbb{P}^{r-1}\times\cP(2^{N-2r},1)\arrow[r] & \cP(2^{N-1},1,1),\\
		c_r:\mathbb{P}^{r-1}\times\cP(2^{N-2r},1)\arrow[r] & \cP(2^{N-1},1,1),\\
		d_r:\mathbb{P}^{r-2}\times\cP(2^{N+1-2r})\arrow[r] & \cP(2^{N-1},1,1).
	\end{tikzcd}
	\]
	\begin{lemma}
		The maps $a_r$, $b_r$, $c_r$ and $d_r$ are proper and representable morphisms of quotient stacks. Moreover, they factor through $\oD_{g,2}$.
	\end{lemma}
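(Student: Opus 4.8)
The plan is to establish properness and representability by separate arguments, since they rest on quite different considerations: properness is formal, while representability reduces to a short computation with the stabilizer groups of the weighted projective stacks involved. I would treat all four maps $a_r,b_r,c_r,d_r$ uniformly, as they share the same structural features.

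For properness, I would first record that every source stack is proper over $k$: a weighted projective stack $\mathbb{P}(w_0,\ldots,w_m)=[(\mathbb{A}^{m+1}\setminus 0)/\mathbb{G}_m]$ with positive weights is a smooth, tame Deligne--Mumford stack (tame since $\charac k>2g\geq 4$) which is proper over $k$, because its map to the coarse weighted projective space is finite and the coarse space is proper; and a finite product of proper stacks over $k$ is again proper. The target $\mathbb{P}(2^{2g+1},1,1)$ is proper, hence separated, over $k$. I would then invoke the standard factorization of a morphism $f$ through its graph $\Gamma_f$, which is a base change of the (proper) diagonal of the separated target, followed by the projection, which is a base change of the proper structure morphism of the source; as a composite of proper morphisms, $f$ is proper. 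This applies verbatim to each of $a_r,b_r,c_r,d_r$.

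For representability, the key principle is that a morphism of quotient stacks induced by a group homomorphism $\rho$ together with a $\rho$-equivariant map on the charts is representable exactly when $\rho$ restricts to a monomorphism on the stabilizer group scheme of every geometric point of the source. Here each source is a quotient by $\mathbb{G}_m\times\mathbb{G}_m$, where the first factor projectivizes the $\mathbb{P}^{\bullet}$-factor (an honest projective space, all weights $1$) and the second projectivizes the weighted factor, while the target is $[(\mathbb{A}^{2g+3}\setminus 0)/\mathbb{G}_m]$. A direct check on each map shows that the induced homomorphism of projectivizing tori is $\rho(\lambda,\mu)=\lambda\mu$ in every case: scaling $f$ by $\lambda$ multiplies the weight-$2$ image coordinates by $\lambda^2$ and the weight-$1$ ones by $\lambda$, and likewise $\mu^2g$, $\mu s_i$ scale the weight-$2$ and weight-$1$ image coordinates by $\mu^2$ and $\mu$. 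Consequently $\ker\rho$ is precisely the antidiagonal $\{(\lambda,\lambda^{-1})\}\cong\mathbb{G}_m$.

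The last step is the stabilizer computation. Since the $\mathbb{P}^{\bullet}$-factor is a genuine projective space, some weight-$1$ coordinate of $f$ is nonzero at any point, forcing $\lambda=1$; hence the stabilizer of every geometric point of the source lies in $\{1\}\times\mathbb{G}_m$. This subgroup meets $\ker\rho=\{(\lambda,\lambda^{-1})\}$ only in the identity, so $\rho$ is automatically injective on stabilizers, and representability follows for all four maps simultaneously, without any case-by-case analysis of the $\mu_2$-stabilizers coming from the weight-$2$ coordinates. I expect the only genuinely delicate point to be the verification that $\rho(\lambda,\mu)=\lambda\mu$ for each map --- that is, carefully tracking which image coordinates have weight $1$ and which weight $2$, and checking that the two fixed marked points contribute the factors $f(0,1)$ and $f(1,0)$ in the way that makes the map descend to the stacks. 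Once this is pinned down, both properness and representability fall out uniformly.
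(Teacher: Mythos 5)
Your proposal is correct and follows essentially the same route as the paper, whose proof is just a pointer to the analogous \cite[Lemma 3.9]{Per22}: properness because each source is proper and the target is separated over $k$, and representability by checking that the induced homomorphism $\rho(\lambda,\mu)=\lambda\mu$ of projectivizing tori is a monomorphism on stabilizer group schemes. Your uniform observation that the honest projective-space factor forces every stabilizer into $\{1\}\times\mathbb{G}_m$, making any case analysis of the $\mu_2$-stabilizers from the weight-$2$ coordinates unnecessary, is a mild streamlining; the only (harmless) imprecision is calling the coarse moduli map of a weighted projective stack finite, where proper and quasi-finite is what one actually uses.
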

	\begin{proof}
		The proof is the same as~\cite[Lemma 3.9]{Per22} and~\cite[Remark 3.10]{Per22}.
	\end{proof}
	\begin{proposition}\label{prop:ChowenvelopeHg2}
		Suppose $\charac k>2g$. Let $\omega_r$ be the co-product morphism of $a_r$, $b_r$, $c_r$ and $d_r$, and define $\omega=\bigsqcup_{r=1}^{g+1}\omega_r$.
		Then $\omega$ is Chow-surjective.
	\end{proposition}
	\begin{proof}
		It follows by construction that $\omega$ is a Chow envelope; the proof is analogous to~\cite[Proposition 3.12]{Per22}. Then, the statement follows from Proposition~\ref{prop: Chow envelope surjectivity with T}.
	\end{proof}
	\begin{remark}\label{rmk:Chowringsource2}
		Denote by $l_1$ and $l_2$ the two standard characters of $T$. Then,
		\begin{align*}
			\mathrm{CH}_{T}^*(\P^r\times\cP(2^{N-1-2r},1,1))&\simeq\frac{\mathbb{Z}[l_1,l_2,\xi_{r,1},\xi_{r,2}]}{(p(l_1,l_2,\xi_{r,1}),q(l_1,l_2,\xi_{r,2}))}\\
			\mathrm{CH}_{T}^*(\P^{r-1}\times\cP(2^{N-2r},1))&\simeq\frac{\mathbb{Z}[l_1,l_2,\xi_{r-1,1},\xi_{r-1,2}]}{(p'(l_1,l_2,\xi_{r-1,1}),q'(l_1,l_2,\xi_{r-1,2}))}\\
			\mathrm{CH}_{T}^*(\P^{r-2}\times\cP(2^{N+1-2r}))&\simeq\frac{\mathbb{Z}[l_1,l_2,\xi_{r-2,1},\xi_{r-2,2}]}{(p''(l_1,l_2,\xi_{r-2,1}),q''(\xi_{r-2,2}))}.
		\end{align*}
		Here, $\xi_{r,1}$ and $\xi_{r,2}$ are the pullback under the first and second projection, respectively, of the first $T$-equivariant Chern class of the tautological bundle $\cO(1)$ of the respective projective stack. In the denominator, $p$ (respectively $p'$, $p''$) is a monic polynomial in the variable $\xi_{r,1}$ (respectively $\xi_{r-1,1}$, $\xi_{r-2,1}$) of degree $r+1$ (respectively $r$, $r-1$), and similarly for $q$ (respectively $q'$, $q''$), which is of degree $N+1-2r$.
		
		Recall also that
		\[
		\mathrm{CH}_{T}^*(\cP(2^{N-1},1,1))\simeq\frac{\mathbb{Z}[l_1,l_2,h]}{(P(l_1,l_2,h))}
		\]
		for some polynomial $P(l_1,l_2,h)$ and $h=\mathrm{c}_1(\cO(1))$.
	\end{remark}
	\begin{lemma}\label{lem:hpullbackHg2}
		Using the notation above,
		\begin{align*}
			a_r^*(h)&=\xi_{r,1}+\xi_{r,2}, & b_r^*(h)&=\xi_{r-1,1}+\xi_{r-1,2},\\ c_r^*(h)&=\xi_{r-1,1}+\xi_{r-1,2}, & d_r^*(h)&=\xi_{r-2,1}+\xi_{r-2,2}.
		\end{align*}
	\end{lemma}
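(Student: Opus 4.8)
The plan is to compute each pullback as the equivariant first Chern class of the pulled-back line bundle whose $c_1$ is $h$, and to match it against the generators $\xi_{\bullet,1},\xi_{\bullet,2}$ of Lemma~\ref{lem:Chowringsource2}. The guiding observation is that $\mathcal{O}(1)$ on $\mathbb{P}(2^{N-1},1,1)$ is the line bundle attached to the weight-one character of the projectivizing $\mathbb{G}_m$, and its honest global sections are exactly the two weight-one coordinates $s_1,s_2$. So for each map I would pull back a coordinate section that does not vanish identically on that branch, observe that it factors as (a linear coordinate of the $\mathbb{P}^{\bullet}$-factor)$\cdot$(a weight-one coordinate of the weighted-projective factor), and conclude that $\mathcal{O}(1)$ pulls back to $\mathcal{O}(1)\boxtimes\mathcal{O}(1)$.

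Concretely, for $a_r$ I would pull back $s_1$: by the definition of $\widetilde a_r$ one has $a_r^*s_1=f(0,1)\,s_1$, the product of the coordinate $f(0,1)$ of $\mathbb{P}^r$ (a section of $\mathcal{O}_{\mathbb{P}^r}(1)$, with class $\xi_{r,1}$) and the coordinate $s_1$ of the weighted factor (a section of $\mathcal{O}(1)$, with class $\xi_{r,2}$). Passing to equivariant classes and using \cite[Lemma 2.4]{EF09} to write the class of each coordinate hyperplane as its $\xi$ plus the first Chern class of a $\mathrm{G}_{g,2}$-character, one obtains $a_r^*h=\xi_{r,1}+\xi_{r,2}$, the character contributions cancelling because the inducing homomorphism matches the $\mathrm{G}_{g,2}$-weight of the target coordinate $s_1$ with that of $f(0,1)\,s_1$. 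The maps $b_r$ and $c_r$ are handled identically, using the surviving coordinate $s_2$ (resp. $s_1$) and the factorizations $b_r^*s_2=f(1,0)\,s_2$ and $c_r^*s_1=f(0,1)\,s_1$, each time reading off $\xi_{r-1,1}+\xi_{r-1,2}$.

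The map $d_r$ is the genuinely different case and I expect it to be the main obstacle: since $\widetilde d_r$ outputs $(x_0^2x_1^2f^2g,0,0)$, both coordinates $s_1,s_2$ pull back to zero, so the section argument is unavailable. Here I would argue directly with the inducing homomorphism: the extra $\mathbb{G}_m$ together with the map $(\lambda,(a,c))\mapsto(a,\lambda c)$ sends the weight-one projectivizing character of the target to the character whose equivariant first Chern class is $\xi_{r-2,1}+\xi_{r-2,2}$, giving $d_r^*\mathcal{O}(1)\cong\mathcal{O}(1)\boxtimes\mathcal{O}(1)$ and hence $d_r^*h=\xi_{r-2,1}+\xi_{r-2,2}$. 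As a cross-check one may restrict to $\{s_1=s_2=0\}\cong\mathcal{P}(2^{2g+1})$ and use $\mathcal{O}(2)$, whose restriction descends to the ample generator of the coarse space: since $x_0^2x_1^2f^2g$ is quadratic in the coefficients of $f$ and linear in those of $g$, one finds $d_r^*(2h)=2(\xi_{r-2,1}+\xi_{r-2,2})$, consistent with the above. The delicate, bookkeeping-heavy part throughout is to verify that no residual character in $t_0,t_1$ survives in any of the four pullbacks; this is forced by the equivariance of the $\widetilde{(\cdot)}_r$ with respect to the inducing homomorphism, but it must be checked against the precise linearizations fixed for the $\xi_{\bullet,i}$ in Lemma~\ref{lem:Chowringsource2}, since an inconsistent choice of reference coordinate produces spurious multiples of $t_0$.
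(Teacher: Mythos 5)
Your proposal is correct and follows essentially the same route as the paper, whose proof simply defers to~\cite[Lemma 4.2]{Per22}: there, as here, the identity $h\mapsto\xi_{\bullet,1}+\xi_{\bullet,2}$ is read off from the explicit equivariant maps of affine spaces and the inducing homomorphism $(\lambda,(a,c))\mapsto(a,\lambda c)$, which sends the weight-one projectivizing character of the target to a product of the two weight-one characters of the source factors; your section computations for $a_r$, $b_r$, $c_r$ are just this weight argument made concrete, and your direct character argument for $d_r$ is in fact the uniform argument that handles all four maps at once. The only minor remark is that your $\mathcal{O}(2)$ cross-check for $d_r$ implicitly divides by $2$, which is legitimate here because by Lemma~\ref{lem:Chowringsource2} the equivariant Chow ring of the source is a free $\mathbb{Z}[t_0,t_1]$-module, hence torsion-free.
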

	\begin{proof}
		The proof is the same as the one of~\cite[Lemma 5.2]{Per22}.
	\end{proof}
	Therefore, we are left with computing the classes
	\begin{align*}
		\alpha_{r,i}&:=a_{r*}(\xi_{r,1}^i), & \beta_{r,i}&:=b_{r*}(\xi_{r-1,1}^i),\\ \gamma_{r,i}&:=c_{r*}(\xi_{r-1,1}^i), & \delta_{r,i}&:=d_{r*}(\xi_{r-2,1}^i),
	\end{align*}
	in $\CH_T^*(\cP(2^{N-1},1,1))$.
	\begin{lemma}
		The image of $\mathrm{CH}_{T}^*(\overline{\Delta}_{g,2})$ in $\mathrm{CH}_{T}^*(\cP(2^{N-1},1,1))$ is generated by
		\[
		\alpha_{r,0}\text{ for }1\leq r\leq N/2,\quad\beta_{1,0},\quad\gamma_{1,0},\quad\delta_{2,0}.
		\]
	\end{lemma}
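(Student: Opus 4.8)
The plan is to follow the strategy already used for $\mathcal{H}_{g,1}$, combining the Chow envelope of Proposition~\ref{prop:ChowenvelopeHg2} with the factorizations of $b_r$, $c_r$, $d_r$ and with hyperplane restrictions of $a_r$. By Proposition~\ref{prop:ChowenvelopeHg2}, the image of $\mathrm{CH}^*_{\mathrm{G}_{g,2}}(\overline{\Delta}_{g,2})$ in $\mathrm{CH}^*_{\mathrm{G}_{g,2}}(\mathbb{P}(2^{N-1},1,1))$ is the sum of the images of $(a_r)_*$, $(b_r)_*$, $(c_r)_*$, $(d_r)_*$, which by the projection formula is an ideal. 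By Lemma~\ref{lem:Chowringsource2} each source ring is generated over $\mathbb{Z}[t_0,t_1]$ by the two classes $\xi_{\cdot,1}$ and $\xi_{\cdot,2}$, and by Lemma~\ref{lem:hpullbackHg2} the pullback of $h$ is $\xi_{\cdot,1}+\xi_{\cdot,2}$ in every case; hence $\xi_{\cdot,2}$ may be replaced by $(\text{pullback of }h)-\xi_{\cdot,1}$, and the projection formula shows that this ideal is already generated by the classes $\alpha_{r,i}$, $\beta_{r,i}$, $\gamma_{r,i}$, $\delta_{r,i}$. It therefore suffices to express each of these in terms of $\alpha_{r,0}$ (for $1\le r\le N/2$), $\beta_{1,0}$, $\gamma_{1,0}$ and $\delta_{2,0}$.

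For the $\beta$, $\gamma$ and $\delta$ classes I would argue as in the computation of the $\beta_{r,i}$ for $\mathcal{H}_{g,1}$. Each of $b_r$, $c_r$, $d_r$ factors through its base case, $b_r=b_1\circ b_r'$, $c_r=c_1\circ c_r'$ and $d_r=d_2\circ d_r'$, the auxiliary map absorbing the varying square factor; hence the image of $(b_r)_*$ is contained in that of $(b_1)_*$, and similarly for $c$ and $d$. Now Lemma~\ref{lem:hpullbackHg2} gives $(b_1)^*h=\xi_{0,2}$, which generates $\mathrm{CH}^*_{\mathrm{G}_{g,2}}(X(b_1))$ over $\mathbb{Z}[t_0,t_1]$, so $b_1^*$ is surjective; by the projection formula the image of $(b_1)_*$ is then the principal ideal generated by $\beta_{1,0}=(b_1)_*1$. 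The same argument, using the surjectivity of $c_1^*$ and $d_2^*$ (again immediate from Lemma~\ref{lem:hpullbackHg2}), places every $\gamma_{r,i}$ in $(\gamma_{1,0})$ and every $\delta_{r,i}$ in $(\delta_{2,0})$.

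It remains to treat the $\alpha_{r,i}$, keeping $\alpha_{r,0}$ as generators and reducing the case $i>0$ by induction on $i$, exactly as in Lemma~\ref{lem:alphageq1}. Restricting $a_r$ to the hyperplane $\{x_0\mid f\}$ (respectively $\{x_1\mid f\}$) of $\mathbb{P}^r$ forces the section coordinate $s_1$ (respectively $s_2$) to vanish, so that the restriction of $a_r$ factors, after forgetting that coordinate, through $b_r$ (respectively $c_r$). Since the $\mathrm{G}_{g,2}$-equivariant class of such a hyperplane equals $\xi_{r,1}$ up to the first Chern class of a character of the torus (Lemma 2.4 of~\cite{EF09}), pushing forward shows that $\alpha_{r,i}$ is a $\mathbb{Z}[t_0,t_1]$-combination of $\alpha_{r,0},\dots,\alpha_{r,i-1}$ together with classes in the images of $(b_r)_*$ and $(c_r)_*$; by the previous paragraph the latter lie in $(\beta_{1,0})+(\gamma_{1,0})$, and the induction closes. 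Combining the three steps yields the stated set of generators. The step I expect to be the main obstacle is this last one: writing down the restriction diagrams precisely and checking that, after forgetting the vanishing section coordinate, the restriction of $a_r$ really does factor through $b_r$ and $c_r$ with the correct equivariant structure, so that the correction terms genuinely land in the ideals already controlled. By contrast, the surjectivity of $b_1^*$, $c_1^*$ and $d_2^*$ is immediate from Lemma~\ref{lem:hpullbackHg2}.
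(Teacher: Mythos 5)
Your proposal is correct and takes essentially the same approach as the paper: Chow-surjectivity of the envelope plus the projection formula reduce everything to the classes $\alpha_{r,i}$, $\beta_{r,i}$, $\gamma_{r,i}$, $\delta_{r,i}$; the factorizations $b_r=b_1\circ b_r'$, $c_r=c_1\circ c_r'$, $d_r=d_2\circ d_r'$ with surjectivity of $b_1^*$, $c_1^*$, $d_2^*$ give the principal ideals $(\beta_{1,0})$, $(\gamma_{1,0})$, $(\delta_{2,0})$; and the hyperplane restriction of $a_r$ runs the induction on $i$ exactly as in the paper, except that the paper only needs the single coordinate hyperplane factoring through $b_r$ (since the equivariant class of any coordinate hyperplane is $\xi_{r,1}$ plus the first Chern class of a character), so your use of both hyperplanes, through $b_r$ and $c_r$, is harmlessly redundant. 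One small imprecision: equivariantly $(b_1)^*h=\xi_{0,1}+\xi_{0,2}$, where $\xi_{0,1}$ is the first Chern class of a character rather than zero, but this does not affect the surjectivity of $b_1^*$ on which your argument relies.
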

	\begin{proof}
		Notice that there is a commutative diagram of proper morphisms
		\[
		\begin{tikzcd}
			\mathbb{P}^{r-1}\times\cP(2^{N-2r-1},1,1)\arrow[rd,"\theta"']\arrow[r,hookrightarrow,"j"] & \mathbb{P}^r\times\cP(2^{N-2r-1},1,1)\arrow[r,"a_r"] & \cP(2^{N-1},1,1)\\
			& \mathbb{P}^{r-1}\times\cP(2^{N-2r},1)\arrow[ru,"b_r"']
		\end{tikzcd}
		\]
		where $j$ is the inclusion of the hyperplane where the first coordinate of $\mathbb{P}^r$ is 0. The same reasoning as in Lemma~\ref{lem:alpha>=1} shows that $\alpha_{r,i}$ is contained in the ideal generated by $\alpha_{r,0}$ and the image of $b_{r-1*}$, for every $1\leq i\leq r$.
		
		Now, consider the commutative diagram
		\[
		\begin{tikzcd}
			\mathbb{P}^{r-1}\times\cP(2^{N-2r},1)\arrow[d,"b_r'"']\arrow[r,"b_r"] & \cP(2^{N-1},1,1)\\
			\cP(2^{N-2},1)\arrow[ru,"b_1"']
		\end{tikzcd}
		\]
		where $b_r'(f,(g,s_2))=(f^2g,f(1,0)s_2)$; recall that here $(g,s_2)\in\cP(2^{N-2r},1)$ satisfies $g(1,0)=s_2^2$. This shows that $b_r$ factors through $b_1$. Moreover, the $T$-equivariant pullback $b_1^*$ is surjective; by the projection formula it follows that the image of $b_{1*}$ is generated by $b_{1*}(1)=\beta_{1,0}$.
		
		An analogous argument works for $\gamma_{r,i}$ and $\delta_{r,i}$. The statement follows.
	\end{proof}
	Let $\phi:\cP(2^{N-1},1,1)\rightarrow\cP(2^N,1)$ be the morphism induced by $s_2\mapsto s_2^2$; equivalently, $\phi(f,s_1,s_2)=(f,s_1)$. Notice that it is representable and flat. The proof of the following Lemma is straightforward.
	\begin{lemma}
		Interpret $\cP(2^N,1)$ as parametrizing pairs $(f,s_1)$ with $f(0,1)=s_1^2$, and let $\mathbb{A}^N\subset\cP(2^N,1)$ be the open subscheme where $s_1$ does not vanish. Then,
		\[
		\begin{tikzcd}
			\mathbb{A}^N\arrow[r]\arrow[d,"u"] & \cP(2^{N-1},1,1)\arrow[d,"\phi"]\\
			\mathbb{A}^N\arrow[r] & \cP(2^N,1)
		\end{tikzcd}
		\]
		is cartesian, where $u(m_1,\ldots,m_{N-1},s_2)=(s_2^2,m_1,\ldots,m_{N-1})$.
	\end{lemma}	
	\begin{notation}\label{notation: hyperplanes n=2}
		We adopt the same notation as Notation~\ref{notation: hyperplanes n=1}. Also, we define coordinates $m_1,\ldots,m_{N-1},s_1,s_2$ on $\cP(2^{N-1},1,1)$ so that every element $(f,s_1,s_2)$ has $f$ of the usual form $f=\sum_im_ix^{N-i}y^i$, with $m_N=s_1^2$ and $m_0=s_2^2$.
	\end{notation}
	\begin{lemma}
		Let $i:\oD_{g,1}\hookrightarrow\cP(2^N,1)$ be the inclusion of the locus of singular binary forms. For all $r$, the classes $\alpha_{r,0}$ are in the image of $\phi^*i_*$.
	\end{lemma}
	\begin{proof}
		Construct the commutative diagram with cartesian square
		\[
		\begin{tikzcd}
			\mathbb{P}^r\times\cP(2^{N-2r-1},1,1)\arrow[rd]\arrow[rr,bend left=20,"a_r"]\arrow[r,"a_r'"] & X\arrow[r]\arrow[d,"\psi"] & \cP(2^{N-1},1,1)\arrow[d,"\phi"]\\
			& \mathbb{P}^r\times\cP(2^{N-2r},1)\arrow[r] & \cP(2^N,1).
		\end{tikzcd}
		\]
		Now, the set-theoretic image of $a_r$ is the preimage under $\phi$ of the image of the bottom morphism. We prove that $a'_{r*}(1)=\psi^*(1)=1$. It is enough to show it after base changing the above diagram to an open substack of $\cP(2^N,1)$ intersecting the image of $a_r$. Choose the complement of $(s_1=0)\cup(m_0=0)$ in $\cP(2^N,1)$, where $(m_0=0)$ is then the hyperplane of points $(f,s_1)$ such that $f(1,0)=0$. Over that substack (actually, scheme) we have that $a_r'$ is an isomorphism, thanks to the previous Lemma. Indeed, over $s_1\not=0$ we have a diagram with cartesian square
		\[
		\begin{tikzcd}
			\mathbb{A}^r\times\mathbb{A}^{N-2r}\arrow[r,"a_r'"] & X^{\text{o}}\arrow[r]\arrow[d] & \mathbb{A}^N\arrow[d,"u"]\\
			& \mathbb{A}^r\times\mathbb{A}^{N-2r}\arrow[r] & \mathbb{A}^N
		\end{tikzcd}
		\]
		inducing an isomorphism
		\[
		X^{\text{o}}\simeq\Spec k[x_0,\ldots,x_{r-1},y_0,\ldots,y_{N-2r-1},t]/(t^2-x_0^2y_0).
		\]
		Then, where $x_0\not=0$, which is over the complement of $(m_0=0)$, the morphism $a_r'$ is an isomorphism, and the statement follows.
	\end{proof}
	Let $\phi':\cP(2^{N-1},1,1)\rightarrow\cP(2^N,1)$ the arrow induced by $s_1\mapsto s_1^2$.
	\begin{lemma}
		The classes $\beta_{1,0}$, $\gamma_{1,0}$ and $\delta_{2,0}$ satisfy
		\begin{align*}
			\beta_{1,0}=[(s_1=0)]\cdot[(m_{N-1}=0)], && \gamma_{1,0}=[(s_2=0)]\cdot[(m_1=0)], && \delta_{2,0}=\beta_{1,0}\cdot\gamma_{1,0}.
		\end{align*}
		In particular, they are obtained from the relations in $\cH_{g,1}$ by pullback along $\phi$, $\phi'$.
	\end{lemma}
	\begin{proof}
		It follows immediately from the definition of the maps.
	\end{proof}
	\begin{remark}\label{rmk:projectivebundlerelationHg2far}
		As in Remark~\ref{rmk:projectivebundlerelationHg1}, the above Lemma also shows that the relation in $\mathrm{CH}^*_T(\cP(2^{N-1},1,1))$ coming from the projective bundle formula is contained in the ideal generated by $\beta_{1,0}$.
	\end{remark}	
	Putting everything together we get the following Proposition.
	\begin{proposition}\label{prop: Chow Hg2far}
		Let $\charac k>2g$. Then,
		\begin{align*}
			\mathrm{CH}^*(\cH_{g,2}^{\far})\simeq\frac{\mathbb{Z}[l_1,l_2]}{((4g+2)((g+1)l_1+2l_2),2l_2^2+l_1l_2,(g+1)(2g+1)l_1^2+(4g+2)l_1l_2)}.
		\end{align*}
	\end{proposition}
	It is useful to write it also in terms of $[\cW_{g,2}^1]$ and $\psi_1$.
	\begin{corollary}\label{cor: Chow Hg2far}
		Let $\charac k>2g$. Then, $\CH^*(\cH_{g,2}^{\far})$ is generated by $\psi_1$ and $[\cW_{g,2}^1]$, and the ideal of relations is generated by
		\addtolength{\jot}{2pt}
		\begin{align*}
			\bullet & \
			(4g+2)((g+1)\psi_1-(g-1)[\cW_{g,2}^1]),\\
			\bullet & \
			[\cW_{g,2}^1]^2+\psi_1\cdot[\cW_{g,2}^1],\qquad(2g+1)(g+1)\psi_1^2+(2g+1)(3g-1)[\cW_{g,2}^1]^2.
		\end{align*}
	\end{corollary}
	\begin{proof}
		It follows by performing the change of basis $[\cW_{g,2}^1]=l_2$ and $\psi_1=l_1+l_2$, see the proof of Corollary~\ref{cor:newbase}.
	\end{proof}
	\begin{remark}
		The new relation in degree 2 can be described as follows. Notice that
		\begin{align*}
			[\cW_{g,2}^2]=(g+1)l_1+l_2, && \psi_2=gl_1+l_2,
		\end{align*}
		hence,
		\begin{align*}
			[\cW_{g,2}^2]\cdot([\cW_{g,2}^2]+\psi_2)&=((g+1)l_1+l_2)((2g+1)l_1+2l_2)\\
			&=(g+1)(2g+1)l_1^2+2l_2^2+(4g+3)l_1l_2,
		\end{align*}
		which is the new relation.
	\end{remark}
	\section{Generators and some relations of the Chow ring of $\cH_{g,n}$}\label{sec: generators Chow}
	We have very explicit generators of the Picard group of $\cH_{g,n}$, with evident geometric meaning. Together with our concrete description of $\cH_{g,n}$, this should decisively help in the computation of the integral Chow ring $\cH_{g,n}$, at least when this is generated in degree 1. We know that this does not always happen, since the (rational) Chow ring of the stack of $n$-pointed hyperelliptic curves is not always tautological. An example is given in~\cite[Theorem 3]{PG01}, where the authors construct a non-tautological class in $\cM_{2,20}$. However, we shall prove that for $1\leq n\leq2g+3$ the Chow ring is generated in degree 1. Therefore, Proposition~\ref{prop: generators Picard} gives us also the generators of the Chow ring, and we are left with computing the intersection of those divisors. Thanks to their geometric nature, this is doable.
	\subsection{Generators}
	We show that the generators of the Picard group described in Propositions~\ref{prop: generators Picard} and~\ref{prop: generators Picard n=1,2} actually generate the Chow ring for $1\leq n\leq2g+3$. We already know this for $n=1$, see Section~\ref{sec:ChowringHg1}, and we want to use induction on $n$. To do so, we leverage the fact that $\cZ_{g,n}^{i,j}$ is isomorphic to an open substack of $\cH_{g,n-1}$.
	\begin{definition}\label{def:Hgnij}
		Let $1\leq i<n$. We define $\cH_{g,n-1}^{i}$ as the open substack of $\cH_{g,n-1}$
		\[
			\cH_{g,n-1}^{i}:=\cH_{g,n-1}\setminus(\cW_{g,n-1}^i\cup\bigcup_{l\not=i}\cZ_{g,n-1}^{i,l}).
		\]
	\end{definition}
	\begin{remark}\label{rmk: uij}
		Notice that for every $1\leq i<j\leq n$ there is a morphism
		\[
		\begin{tikzcd}
			u_{i,j}:\cH_{g,n-1}^i\arrow[r] & \cH_{g,n}
		\end{tikzcd}
		\]
		defined by $(C\rightarrow S,\widetilde{\sigma}_1,\ldots,\widetilde{\sigma}_{n-1})\mapsto(C\rightarrow S,\widetilde{\sigma}_1,\ldots,\widetilde{\sigma}_{j-1},\alpha\circ\widetilde{\sigma}_i,\widetilde{\sigma}_j,\ldots,\widetilde{\sigma}_{n-1})$.
		This is well defined as the images of $\alpha\circ\widetilde{\sigma}_i$ and $\widetilde{\sigma}_l$ are disjoint for every $l$.
	\end{remark}
	\begin{lemma}\label{lem: Zgnij as open substack}
		Let $1\leq i<j\leq n$. The morphism
		\[
		\begin{tikzcd}
			\Phi_{i,j}:\cZ_{g,n}^{i,j}\arrow[r] & \cH_{g,n-1}
		\end{tikzcd}
		\]
		that forgets the $j$-th section is an open immersion, and induces $\cZ_{g,n}^{i,j}\simeq\cH_{g,n-1}^{i}$, with inverse $u_{i,j}$.
	\end{lemma}
	\begin{proof}
		For every $i\not=l\not=j$, we have $\cW_{g,n}^i\cap\cZ_{g,n}^{i,j}=\cZ_{g,n}^{i,l}\cap\cZ_{g,n}^{i,j}=\emptyset$, thus $\Phi_{i,j}$ factors through $\cH_{g,n-1}^{i}\subset\cH_{g,n-1}$. Then, it is clear that $u_{i,j}$ and $\Phi_{i,j}$ are inverses.
	\end{proof}
	\begin{lemma}
		Let $1\leq n\leq2g+3$. Then the Chow ring of $\cH_{g,n}^{\far}$ is generated in degree 1. In particular, for $n\geq3$ it is generated by $[\cW_{g,n}^1]$.
	\end{lemma}
	\begin{proof}
		We know that this is true for $n=1,2$, see Theorem~\ref{thm:ChowHg1} and Proposition~\ref{prop: Chow Hg2far}. For $3\leq n\leq 2g+3$, thanks to Proposition~\ref{prop:representationweightedprojectivestack3<=n<=2g+3}, we know that $\cH_{g,n}^{\far}$ is an open substack of a weighted projective space, and the localization sequence concludes. The last part follows from the first and Subsection~\ref{subsec:Picardgroup}.
	\end{proof}
	We are now ready to prove the main Proposition of this Subsection.
	\begin{proposition}\label{prop:generateddegree1}
		For all $1\leq n\leq2g+3$, $\mathrm{CH}^*(\cH_{g,n})$ is generated in degree 1.
	\end{proposition}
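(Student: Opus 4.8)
The plan is to induct on $n$, the case $n=1$ being settled in Section~\ref{sec:ChowringHg1}: by Corollary~\ref{cor:ChowHg1geometricbase} the ring $\mathrm{CH}^*(\mathcal{H}_{g,1})$ is generated by the degree-$1$ classes $\psi_1$ and $[\mathcal{W}_{g,1}^1]$. For the inductive step I would exploit the localization (excision) sequence attached to the open immersion $\mathcal{H}_{g,n}^{\text{far}}\hookrightarrow\mathcal{H}_{g,n}$, whose closed complement is $\bigcup_{i<j}\mathcal{Z}_{g,n}^{i,j}$. Since the Chow groups of a finite union of closed substacks are generated by the proper pushforwards from its members, this yields the exact sequence
\[
\bigoplus_{i<j}\mathrm{CH}^{*-1}(\mathcal{Z}_{g,n}^{i,j})\xrightarrow{\ \bigoplus (u_{i,j})_*\ }\mathrm{CH}^{*}(\mathcal{H}_{g,n})\xrightarrow{\ \mathrm{res}\ }\mathrm{CH}^{*}(\mathcal{H}_{g,n}^{\text{far}})\to 0,
\]
the shift by one in codimension reflecting the fact that each $\mathcal{Z}_{g,n}^{i,j}$ is a divisor. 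Writing $R\subseteq\mathrm{CH}^*(\mathcal{H}_{g,n})$ for the subring generated by divisor classes, the goal is $R=\mathrm{CH}^*(\mathcal{H}_{g,n})$, and by the exact sequence it suffices to check that $\mathrm{res}(R)$ is everything on the right and that the image of the left-hand map lands inside $R$.

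The surjection onto the quotient is immediate: the Lemma preceding this Proposition shows $\mathrm{CH}^*(\mathcal{H}_{g,n}^{\text{far}})$ is generated in degree $1$, and since $\mathrm{Pic}(\mathcal{H}_{g,n})\to\mathrm{Pic}(\mathcal{H}_{g,n}^{\text{far}})$ is surjective (again by localization in degree $1$), each degree-$1$ generator downstairs lifts to a divisor class on $\mathcal{H}_{g,n}$; hence $\mathrm{res}(R)=\mathrm{CH}^*(\mathcal{H}_{g,n}^{\text{far}})$. For the left-hand map, recall from Definition~\ref{def:Hgnij} and the subsequent Remark that $u_{i,j}$ identifies $\mathcal{Z}_{g,n}^{i,j}$ with the open substack $\mathcal{H}_{g,n-1}^{i,j}\subseteq\mathcal{H}_{g,n-1}$; by the inductive hypothesis together with the surjectivity of flat pullback for open immersions, $\mathrm{CH}^*(\mathcal{Z}_{g,n}^{i,j})$ is generated in degree $1$. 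The non-formal difficulty is that the proper pushforward $(u_{i,j})_*$ is \emph{not} a ring map, so one cannot directly conclude that products of divisor classes push forward into $R$. The remedy is the projection formula: if every divisor class on $\mathcal{Z}_{g,n}^{i,j}$ is a restriction $u_{i,j}^*\omega$ of a divisor class $\omega$ on $\mathcal{H}_{g,n}$, then for a product $\zeta=u_{i,j}^*(\omega_1)\cdots u_{i,j}^*(\omega_k)=u_{i,j}^*(\omega_1\cdots\omega_k)$ one gets $(u_{i,j})_*\zeta=(u_{i,j})_*(1)\cdot(\omega_1\cdots\omega_k)=[\mathcal{Z}_{g,n}^{i,j}]\cdot\omega_1\cdots\omega_k\in R$. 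Extending by linearity over the degree-$1$ generators then disposes of the entire image of $(u_{i,j})_*$.

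Thus the crux—and the step I expect to be the main obstacle—is verifying that the restriction $u_{i,j}^*\colon\mathrm{Pic}(\mathcal{H}_{g,n})\to\mathrm{Pic}(\mathcal{Z}_{g,n}^{i,j})$ is surjective, equivalently that the tautological divisors generating $\mathrm{Pic}(\mathcal{H}_{g,n-1}^{i,j})$ arise as restrictions of the generators $[\mathcal{W}_{g,n}^1]$ and the $[\mathcal{Z}_{g,n}^{k,l}]$ furnished by Proposition~\ref{prop:generators}. This is a geometric bookkeeping matter: under the identification $u_{i,j}$ the Weierstrass divisor and the diagonal-type divisors on $\mathcal{H}_{g,n-1}$ match, up to a relabeling of the marked points, the restrictions of the corresponding divisors on $\mathcal{H}_{g,n}$, which can be read off from the Construction defining the $\mathcal{Z}_{g,n}^{i,j}$ and from the description of the Picard group in~\cite{Lan23}. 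Once this surjectivity is in hand, $u_{i,j}^*$ is automatically surjective in every degree (its target being generated in degree $1$), so the projection-formula argument applies verbatim.

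To finish, I would assemble these pieces: given a homogeneous $\eta\in\mathrm{CH}^d(\mathcal{H}_{g,n})$, choose $P\in R$ with $\mathrm{res}(P)=\mathrm{res}(\eta)$, using the surjection onto the quotient; then $\eta-P$ lies in the image of $\bigoplus(u_{i,j})_*$, which by the previous paragraph is contained in $R$, so $\eta\in R$. This closes the induction and proves that $\mathrm{CH}^*(\mathcal{H}_{g,n})$ is generated in degree $1$ for all $1\leq n\leq 2g+3$.
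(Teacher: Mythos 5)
Your proof follows essentially the same route as the paper's: induction on $n$ with base case $n=1$, the localization sequence $\bigoplus_{i<j}\mathrm{CH}^*(\mathcal{Z}_{g,n}^{i,j})\to\mathrm{CH}^*(\mathcal{H}_{g,n})\to\mathrm{CH}^*(\mathcal{H}_{g,n}^{\text{far}})\to0$, and the projection formula to show that the pushforward image lies in the subring generated by divisor classes once $u_{i,j}^*$ is surjective. The step you single out as the crux---surjectivity of $u_{i,j}^*\colon\mathrm{Pic}(\mathcal{H}_{g,n})\to\mathrm{Pic}(\mathcal{Z}_{g,n}^{i,j})$---is precisely the point the paper passes over with ``hence the restriction $(u_{i,j})^*$ is surjective,'' so your write-up is the same argument, stated slightly more explicitly.
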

	\begin{proof}			
		We already know that the statement holds for $n=1$ and for $\cH_{g,n}^{\far}$. We prove the Proposition by induction on $n$.
		Suppose $n\geq2$ and that the statement is true for $n-1$. Let $R_n$ be the subring of $\CH^*(\cH_{g,n})$ generated by 1 and the classes of degree 1. By Lemma~\ref{lem: Zgnij as open substack} there is an exact sequence
		\begin{equation}\label{diag:generateddeg1}
			\begin{tikzcd}
				\bigoplus_{i<j}\mathrm{CH}^*(\cH_{g,n-1}^{i})\arrow[rr,"\sum(u_{i,j})_*"] && \mathrm{CH}^*(\cH_{g,n})\arrow[r,"\iota^*"]& \mathrm{CH}^*(\cH_{g,n}^{\far})\arrow[r] & 0
			\end{tikzcd}
		\end{equation}
		where $u_{i,j}$ is as in Remark~\ref{rmk: uij}. Since $\mathrm{CH}^*(\cH_{g,n-1})$ is generated in degree 1 by the classes in Proposition~\ref{prop: generators Picard}, the same holds for the Chow ring of $\cH_{g,n-1}^{i}$. Therefore, every element of $\CH^*(\cH_{g,n}^i)$ is the pullback along $u_{i,j}$ of an element of $R_n\subset\CH^*(\cH_{g,n})$. It follows from the projection formula that the image of $\sum(u_{i,j})_*$ is contained in $R_n$.
		
		Now, let $\beta$ be an element of $\mathrm{CH}^*(\cH_{g,n})$.
		Since $[\cW_{g,n}^1]$ and $\psi_1$ generate $\mathrm{CH}^*(\cH_{g,n}^{\far})$, we can modify $\beta$ with an element generated in degree 1 so that its restriction to $\cH_{g,n}^{\far}$ is 0. Therefore, from now on we may assume that $\iota^*(\beta)=0$. Thanks to the exactness of the sequence~\eqref{diag:generateddeg1}, $\alpha$ is in the image of $\sum_{i,j}(u_{i,j})_*$, thus in $R_n$.
	\end{proof}
	\begin{corollary}\label{cor:generators}
		Assume that $\charac k>2g$. Then, the classes $\psi_1$, $[\cW_{g,2}^1]$, $[\cZ_{g,2}^{1,2}]$ form a minimal set of generators of $\mathrm{CH}^*(\cH_{g,2})$, and
		\begin{equation}
			(4g+2)(g+1)\psi_1=(4g+2)(g-1)[\cW_{g,2}^1]
		\end{equation}
		is the only relation in degree 1.
		
		Let $n\geq3$. Then
		\[
		\{[\cZ_{g,n}^{1,j}]\}_{2\leq j\leq n}\cup\{[\cZ_{g,n}^{2,3}]\}\cup\{[\cW_{g,n}^1]\}
		\]
		is a minimal set of generators of $\mathrm{CH}^*(\cH_{g,n})$, and
		\begin{equation}
			(8g+4)[\cW_{g,n}^1]=(4g+2)(g+1)([\cZ_{g,n}^{1,2}]+[\cZ_{g,n}^{1,3}]-[\cZ_{g,n}^{2,3}])
		\end{equation}
		is the only relation in degree 1.
	\end{corollary}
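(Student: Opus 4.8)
The plan is to read the corollary off from two facts already in hand: that $\mathrm{CH}^*(\mathcal{H}_{g,n})$ is generated in degree one (Proposition~\ref{prop:generateddegree1}), and the explicit description of the Picard group (Proposition~\ref{prop:generators12} for $n=2$ and Proposition~\ref{prop:generators} for $n\geq3$). To begin, I would recall that $\mathcal{H}_{g,n}$ is smooth and integral for all $g\geq2$ and all $n$, so that its first Chow group is canonically identified with its Picard group, $\mathrm{CH}^1(\mathcal{H}_{g,n})\cong\mathrm{Pic}(\mathcal{H}_{g,n})$. This identification is what allows the degree-one data of the ring to be imported wholesale from the computations of~\cite{Lan23}.

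The key formal observation is the following. By Proposition~\ref{prop:generateddegree1} the graded ring $\mathrm{CH}^*(\mathcal{H}_{g,n})$ is generated as a ring by its degree-one part. Since any product of two degree-one classes lies in degree at least two, a set of degree-one classes generates the ring if and only if it generates the abelian group $\mathrm{CH}^1(\mathcal{H}_{g,n})$; moreover such a set is a \emph{minimal} ring generating set exactly when it is a minimal generating set of the abelian group $\mathrm{CH}^1$. In the same vein, a $\mathbb{Z}$-linear relation among the degree-one generators that holds in the ring is precisely a relation in $\mathrm{CH}^1=\mathrm{Pic}$, so the relations of the ring in degree one coincide with the relations of the Picard group.

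It then remains only to transport the Picard group computations through this identification. For $n=2$, Proposition~\ref{prop:generators12} states that $\psi_1$, $[\mathcal{W}_{g,2}^1]$ and $[\mathcal{Z}_{g,2}^{1,2}]$ form a minimal generating set of $\mathrm{Pic}(\mathcal{H}_{g,2})$ subject to the single relation $(4g+2)(g+1)\psi_1=(4g+2)(g-1)[\mathcal{W}_{g,2}^1]$; for $n\geq3$, Proposition~\ref{prop:generators} gives the generating set $\{[\mathcal{Z}_{g,n}^{1,j}]\}_{2\leq j\leq n}\cup\{[\mathcal{Z}_{g,n}^{2,3}]\}\cup\{[\mathcal{W}_{g,n}^1]\}$ together with its single relation $(8g+4)[\mathcal{W}_{g,n}^1]=(4g+2)(g+1)([\mathcal{Z}_{g,n}^{1,2}]+[\mathcal{Z}_{g,n}^{1,3}]-[\mathcal{Z}_{g,n}^{2,3}])$. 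Feeding these into the previous paragraph yields exactly the asserted generators and degree-one relation in each case. There is no genuine obstacle here: the only point deserving a word of justification is the equivalence between minimality of degree-one ring generators and minimality of generators of the abelian group $\mathrm{CH}^1$, and this is immediate once generation in degree one is known. The corollary is thus a direct bookkeeping consequence of Propositions~\ref{prop:generateddegree1},~\ref{prop:generators12} and~\ref{prop:generators}.
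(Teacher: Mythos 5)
Your proof is correct and follows essentially the same route as the paper: the corollary is stated there with a \qed precisely because it is the immediate combination of Proposition~\ref{prop:generateddegree1} with the Picard group computations of Propositions~\ref{prop:generators12} and~\ref{prop:generators}, via the identification $\mathrm{CH}^1\cong\mathrm{Pic}$ for the smooth integral stack $\mathcal{H}_{g,n}$. Your explicit justification that minimality and degree-one relations transfer from the abelian group $\mathrm{CH}^1$ to the ring (since products of degree-one classes land in degree at least two) is exactly the bookkeeping the paper leaves implicit.
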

	\begin{remark}
		Notice that the proof of Proposition~\ref{prop:generateddegree1} works for all $n$ such that $\mathrm{CH}^*(\cH_{g,n}^{\far})$ and $\mathrm{CH}^*(\cH_{g,n-1})$ are generated in degree 1. Thus, it shows that the first examples of non-tautological classes are to be looked for in $\mathrm{CH}^*(\cH_{g,n}^{\far})$.
	\end{remark}
	\subsection{Intersection of Chow classes}
	In this Subsection we will compute some relations between intersections of the divisors appearing in Corollary~\ref{cor:generators} and other related divisors. These will completely determine the Chow ring of $\cH_{g,n}$ for $n=2$, and only partially for $3\leq n\leq2g+3$. To better structure the Subsection, we subdivide the relations we get according to their degree, which is 1 or 2.
	\subsubsection{Degree 1}\label{subsubsec: degree 1}
	We start with degree 1 relations. On the one hand, we already know all of them between the generators, see Corollary~\ref{cor:generators}; on the other hand, there are other geometrically meaningful classes that we want to write in terms of the generators we have. This will be important to find new relations in higher degrees.
	\begin{remark}\label{rmk: rationally + far = integrally}
		We say that a relation between Chow classes holds rationally if it holds in $\CH_{\Q}^*(\cH_{g,n})$, and same for the Picard group. Since the restriction $\Pic(\cH_{g,n})_{\text{tors}}\rightarrow\Pic(\cH_{g,n}^{\far})_{\text{tors}}$ is injective by~\cite[Theorem 3.2 and Propositions 3.3, 3.5]{Lan23}, to show that a relation holds in $\Pic(\cH_{g,n})$ it is enough to show that it holds rationally and in $\Pic(\cH_{g,n}^{\far})$.
	\end{remark}
\begin{lemma}\label{lem:gW1+W2}
	Let $n\geq2$. Then,
	\begin{equation}\label{eq:gW1+W2}
	g[\cW_{g,n}^1]+[\cW_{g,n}^2]=(g+1)\psi_1+(g+1)[\cZ_{g,n}^{1,2}].
	\end{equation}
\end{lemma}
\begin{proof}
	By Remark~\ref{rmk: rationally + far = integrally}, it is enough to show that~\eqref{eq:gW1+W2} holds rationally and in $\Pic(\cH_{g,n}^{\far})$. Thanks to the rational relations $[\cW_{g,n}^1]+[\cW_{g,n}^2]=(g+1)[\cZ_{g,n}^{1,2}]$, proved in~\cite[Theorems 1.1, 1.2]{EH21} or~\cite[Lemma 5.10]{Lan23}, and  $(g-1)[\cW_{g,n}]=(g+1)\psi_1$ coming from Proposition~\ref{prop: generators Picard n=1,2}, the relation holds in $\Pic_{\Q}(\cH_{g,n})$. From the computation in Subsection~\ref{subsec:Picardgroup} of the classes $\psi_1$, $[\cW_{g,n}^1]$, $[\cW_{g,n}^2]$ in $\Pic(\cH_{g,n}^{\far})$, the equation also holds in $\Pic(\cH_{g,n}^{\far})$.
\end{proof}
\begin{lemma}\label{lem:psi2}
	Using the basis described in Remark~\ref{rmk: explicit character basis Picard}, in $\Pic(\cH_{g,2}^{\far})$ we have
	\[
	\psi_2=
	\begin{cases}
		((g-1)/2,1)\in\mathbb{Z}\oplus\mathbb{Z}/(8g+4)\mathbb{Z} & \text{ if }g\text{ is odd},\\
		(g-1,-g/2+1)\in\mathbb{Z}\oplus\mathbb{Z}/(4g+2)\mathbb{Z} & \text{ if }g\text{ is even.}
	\end{cases}
	\]
	Moreover, for every $n\geq2$, in $\Pic(\cH_{g,n})$ it holds
	\begin{align}
		(g-1)([\cW_{g,n}^1]-[\cW_{g,n}^2])&=(g+1)(\psi_1-\psi_2),\\
		[\cW_{g,n}^1]+[\cW_{g,n}^2]&=\psi_1+\psi_2+2[\cZ_{g,n}^{1,2}],\\
		(g-1)[\cW_{g,n}^1]+\psi_2&=g\psi_1+(g-1)[\cZ_{g,n}^{1,2}].
	\end{align}
\end{lemma}
\begin{proof}
	The first part follows from the computation in Proposition~\ref{prop:classpsi1}, by pulling back along the map $\cH_{g,2}^{\far}\rightarrow\cH_{g,1}$ that forgets the first section. For the second part, we can assume $n=2$. Then, the equations follow from the fact that the relations hold both rationally and in $\CH(\cH_{g,2}^{\far})$, see Remark~\ref{rmk: rationally + far = integrally}.
\end{proof}
Now, we focus on the case $n\geq3$.
\begin{lemma}\label{lem:W1W2integral}
	Let $n\geq3$. Then,
	\begin{equation}\label{eq:W1W2integral}
		[\cW_{g,n}^1]-[\cW_{g,n}^2]=(g+1)([\cZ_{g,n}^{1,3}]-[\cZ_{g,n}^{2,3}]).
	\end{equation}
\end{lemma}
\begin{proof}
	The fact that it holds rationally follows from Proposition~\ref{prop: generators Picard} and the rational relation
	\[
	[\cW_{g,n}^1]+[\cW_{g,n}^2]=(g+1)[\cZ_{g,n}^{1,2}]
	\]
	proved in~\cite[Theorems 1.1, 1.2]{EH21}, or in~\cite[Lemma 5.10]{Lan23}. By Remark~\ref{rmk: rationally + far = integrally}, it is enough to show the vanishing of $[\cW_{g,n}^1]-[\cW_{g,n}^2]$ in $\Pic(\cH_{g,n}^{\far})$, and we can assume $n=3$. Then, the result follows from Lemma~\ref{lem: pullback Hg3far Hg2far}.
\end{proof}
\begin{lemma}\label{lem: W1 psi1 Z}
	Let $n\geq3$. Then,
	\begin{equation}\label{eq:W1psi1}
		[\cW_{g,n}^1]=\psi_1+[\cZ_{g,n}^{1,2}]+[\cZ_{g,n}^{1,3}]-[\cZ_{g,n}^{2,3}].
	\end{equation}
\end{lemma}
\begin{proof}
	By Lemma~\ref{lem: pullback Hg3far Hg2far}, we know that $\psi_1-[\cW_{g,n}^1]$ is 0 in $\Pic(\cH_{g,n}^{\far})$ for $n\geq3$. By Lemmas~\ref{lem:gW1+W2} and~\ref{lem:W1W2integral}, we have
	\[
	(g+1)[\cW_{g,n}^1]=(g+1)\psi_1+(g+1)([\cZ_{g,n}^{1,2}]+[\cZ_{g,n}^{1,3}]-[\cZ_{g,n}^{2,3}]),
	\]
	hence equation~\eqref{eq:W1psi1} holds rationally. We conclude by Remark~\ref{rmk: rationally + far = integrally}.
\end{proof}
	\subsubsection{Degree 2}\label{subsubsec: degree 2}
	We use the geometry of $\cH_{g,n}$ and the computations above to obtain various identities in $\CH^2(\cH_{g,n})$. We start with a geometrically intuitive vanishing.
	\begin{lemma}\label{lem:ortogonality}
		Let $n\geq2$. In $\mathrm{CH}^*(\cH_{g,n})$, we have
		\begin{align}
			[\cZ_{g,n}^{1,i}]\cdot[\cW_{g,n}^1]&=0\quad\text{for all }i\not=1.
		\end{align}
		If $n\geq3$, then
		\begin{align}
			[\cZ_{g,n}^{1,i}]\cdot[\cZ_{g,n}^{1,j}]&=0\quad\text{for all }1<i<j\leq n,\\
			[\cZ_{g,n}^{1,i}]\cdot[\cZ_{g,n}^{2,3}]&=0\quad\text{for } i=2,3.
		\end{align}
	\end{lemma}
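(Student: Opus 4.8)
The plan is to prove each identity by showing that the two divisors in question have \emph{disjoint support}, and then to invoke the general fact that on a smooth stack the product of the classes of two closed substacks with empty set-theoretic intersection vanishes. Since $\mathcal{H}_{g,n}$ is smooth and integral and all the classes appearing are effective divisor classes, it suffices to work with the underlying reduced loci.

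The first step is to translate each locus into a condition on the universal sections. Recalling the Construction, a point of $\mathcal{Z}_{g,n}^{k,l}$ satisfies $\widetilde{\sigma}_k=\alpha\circ\widetilde{\sigma}_l$: indeed $\sigma_k=\sigma_l$ in $\mathcal{P}_{g,n}$ forces $\widetilde{\sigma}_k$ and $\widetilde{\sigma}_l$ either to coincide, which is impossible since the extended diagonal has been removed, or to differ by the hyperelliptic involution. Likewise, a point of $\mathcal{W}_{g,n}^1$ is one where $\widetilde{\sigma}_1$ lands in the ramification locus, i.e. $\alpha\circ\widetilde{\sigma}_1=\widetilde{\sigma}_1$.

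Each disjointness statement then follows from a one-line computation using $\alpha^2=\mathrm{id}$ and the pairwise distinctness of the marked points. For the first identity, a point of $\mathcal{Z}_{g,n}^{1,i}\cap\mathcal{W}_{g,n}^1$ would satisfy $\widetilde{\sigma}_1=\alpha\circ\widetilde{\sigma}_i$ and $\alpha\circ\widetilde{\sigma}_1=\widetilde{\sigma}_1$, giving $\widetilde{\sigma}_i=\alpha\circ\widetilde{\sigma}_1=\widetilde{\sigma}_1$ with $i\neq1$, which is excluded. For the second identity, the relations $\widetilde{\sigma}_1=\alpha\circ\widetilde{\sigma}_i$ and $\widetilde{\sigma}_1=\alpha\circ\widetilde{\sigma}_j$ force $\widetilde{\sigma}_i=\widetilde{\sigma}_j$ with $i\neq j$. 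For the third, taking $i=2$ the relations $\widetilde{\sigma}_1=\alpha\circ\widetilde{\sigma}_2$ and $\widetilde{\sigma}_2=\alpha\circ\widetilde{\sigma}_3$ give $\widetilde{\sigma}_1=\widetilde{\sigma}_3$, and the case $i=3$ is symmetric. Equivalently, the second and third identities are instances of the Remark following the Construction, since the index sets $\{1,i\},\{1,j\}$ and $\{1,i\},\{2,3\}$ meet in exactly one element.

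It remains to pass from empty intersection to the vanishing of the product, and this is where the only real care is needed. Realizing the second divisor in each pair by a local equation that is invertible along the support of the first shows that its restriction is the zero Cartier divisor, so the product equals a class pushed forward from $\mathrm{CH}_*(\emptyset)=0$; in the equivariant formulation of Edidin--Graham this is the statement that the refined Gysin pullback along the diagonal factors through the Chow group of the (empty) set-theoretic intersection. I do not expect a substantive obstacle in this lemma: the content is entirely the elementary geometry of the previous paragraph, and the disjoint-support vanishing is a formal consequence of smoothness.
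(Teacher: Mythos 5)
Your proposal is correct and takes essentially the same route as the paper, whose entire proof is the observation that each pair of divisors has empty set-theoretic intersection (the $\mathcal{Z}$--$\mathcal{Z}$ cases being the instance, already recorded in the Remark after the Construction, where the index sets intersect properly). Your explicit verification of disjointness via the hyperelliptic involution and the formal disjoint-support vanishing of the product merely spell out what the paper states in one line.
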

	\begin{proof}
		It follows from the fact that the corresponding intersections are empty.
	\end{proof}
	The next three Lemmas are particularly important for the case $n=2$.
	\begin{lemma}\label{lem: W1 psi1}
		Let $n\geq i\geq1$. Then,
		\[
			[\cW_{g,n}^i]^2+\psi_i\cdot[\cW_{g,n}^i]=0.
		\]
	\end{lemma}
	\begin{proof}
		For $i=n=1$, this is proved in Corollary~\ref{cor:ChowHg1geometricbase}, and the other cases are obtained by pullback along forgetful morphisms.
	\end{proof}
	\begin{lemma}\label{lem:psiZ23}
		Let $n\geq2$. Then, for all $i\not=j$,
		\[
		[\cZ_{g,n}^{i,j}]|_{\cZ_{g,n}^{i,j}}=-\psi_i|_{\cZ_{g,n}^{i,j}}=-\psi_j|_{\cZ_{g,n}^{i,j}}
		\]
		in $\Pic(\cZ_{g,n}^{i,j})$.
		In particular, in $\CH^*(\cH_{g,n})$ we have
		\[
		[\cZ_{g,n}^{i,j}]^2=-[\cZ_{g,n}^{i,j}]\cdot\psi_i=-[\cZ_{g,n}^{i,j}]\cdot\psi_j.
		\]
	\end{lemma}
	\begin{proof}
		Clearly, it is enough to show it for $n=2$. Let
		\[
		\begin{tikzcd}
			\cC_{g,2}\arrow[r,"f"] & \cP_{g,2}\arrow[r,"\pi"] & \cH_{g,2}
		\end{tikzcd}
		\]
		be the universal family, with universal sections $\widetilde{\sigma}_1$, $\widetilde{\sigma}_2$, and universal `intermediate' sections $\sigma_1:=f\circ\widetilde{\sigma}_1$ and $\sigma_2:=f\circ\widetilde{\sigma}_2$. Let $L_i^\vee$ be the ideal sheaf associated to $\sigma_i(\cH_{g,2})=:\cD_{g,2}^i\subset\cP_{g,2}$.  Thanks to~\cite[Lemma 2.3]{Lan23}, we have $\sigma_2^*\cO_{\cD_{g,2}^1}=\cO_{\cZ_{g,2}^{1,2}}$. Applying $\sigma_2^*$ to the exact sequence
		\[
		\begin{tikzcd}
			0\arrow[r] & L_1^\vee\arrow[r] & \cO_{\cP_{g,2}}\arrow[r] & \cO_{\cD_{g,2}^1}\arrow[r] & 0
		\end{tikzcd}
		\]
		we get a surjective map $\sigma_2^*(L_1^\vee)\rightarrow\cO_{\cH_{g,2}}(-\cZ_{g,2}^{1,2})$ between invertible sheaves, hence an isomorphism. It follows that \[
		\cO_{\cH_{g,2}}(\cZ_{g,2}^{1,2})\simeq\sigma_2^*L_1\simeq\sigma_1^*L_2.
		\]
		
		Let $\cC:=\cC_{g,2}$, $\cP:=\cP_{g,2}$ and $\cH:=\cH_{g,2}$. Recall that there is an exact sequence of sheaves
		\[
		\begin{tikzcd}
			0\arrow[r] & f^*\omega_{\cP/\cH}\arrow[r] & \omega_{\cC/\cH}\arrow[r] & \Omega_{\cC/\cP}\arrow[r] & 0
		\end{tikzcd}
		\]
		where $\Omega_{\cC/\cP}$ is supported at the Weierstrass divisor. Let $u_{1,2}:\cZ_{g,2}^{1,2}\hookrightarrow\cH_{g,2}$ be the inclusion. Applying $u_{1,2}^*\circ\widetilde{\sigma}_1^*$ to the above exact sequence, we get
		\[
		\begin{tikzcd}
			u_{1,2}^*\sigma_1^*(\omega_{\cP/\cH})\arrow[r] & u_{1,2}^*\widetilde{\sigma}_1^*(\omega_{\cC/\cH})\arrow[r] & u_{1,2}^*\widetilde{\sigma}_1^*(\Omega_{\cC/\cP})\arrow[r] & 0.
		\end{tikzcd}
		\]
		Notice that the class of the invertible sheaf in the middle is equal to the restriction of $\psi_1$ to $\cZ_{g,2}^{1,2}$. Since the image of $\widetilde{\sigma}_1\circ u_{1,2}$ does not intersect the Weierstrass divisor, $u_{1,2}^*\widetilde{\sigma}_1^*\Omega_{\cC/\cP}=0$, and we get an isomorphism
		\[
		\begin{tikzcd}
			u_{1,2}^*\sigma_1^*\omega_{\cP/\cH}\arrow[r,"\simeq"] & u_{1,2}^*\widetilde{\sigma}_1^*\omega_{\cC/\cH}.
		\end{tikzcd}
		\]
		Now, $\sigma_1^*\omega_{\cP/\cH}\simeq\sigma_1^*(L_1^\vee)$, and clearly $u_{1,2}^*\sigma_1^*\simeq u_{1,2}^*\sigma_2^*$. It follows that
		\[
		u_{1,2}^*\cO_{\cH_{g,2}}(-\cZ_{g,2}^{1,2})\simeq u_{1,2}^*\sigma_2^*L_1^\vee\simeq u_{1,2}^*\sigma_1^*L_1^\vee\simeq u_{1,2}^*\sigma_1^*\omega_{\cP/\cH}\simeq u_{1,2}^*\widetilde{\sigma}_1^*(\omega_{\cC/\cH})
		\]
		which proves the Lemma.
	\end{proof}
		\begin{lemma}\label{lem:longrel23}
		For all $n\geq2$, we have
		\begin{equation}\label{eq:longrel23}
		(2g+1)(g+1)\psi_1^2+(2g+1)(3g-1)[\cW_{g,n}^1]^2=(2g+1)(g+1)[\cZ_{g,n}^{1,2}]^2.
		\end{equation}
	\end{lemma}
	\begin{proof}
		By Lemma~\ref{lem: W1 psi1} we have $[\cW_{g,2}^2]^2+\psi_2[\cW_{g,2}^2]=0$. Writing this relation in terms of $\psi_1$, $[\cZ_{g,2}^{1,2}]$ and $[\cW_{g,2}^1]$ using Lemma~\ref{lem:gW1+W2} and the last relation of Lemma~\ref{lem:psi2}, we get equation~\eqref{eq:longrel23}.
	\end{proof}
	Now, we focus on the case $n\geq3$.
	\begin{lemma}\label{lem:W1Z23}
		Let $n\geq3$. Then,
		\begin{align}
			[\cW_{g,n}^1]\cdot[\cZ_{g,n}^{2,3}]=-(g+1)[\cZ_{g,n}^{2,3}]^2,\\
			(2g+1)(g+1)[\cZ_{g,n}^{1,2}]^2=(2g+1)(g+1)[\cZ_{g,n}^{i,j}]^2, && \text{ for every }i\not=j.
		\end{align}
	\end{lemma}
	\begin{proof}
		To get the first relation, multiply equation~\eqref{eq:W1W2integral} of Lemma~\ref{lem:W1W2integral} by $[\cZ_{g,n}^{2,3}]$, and apply Lemma~\ref{lem:ortogonality}. For the second, take equation~\eqref{eq:longrel23} and use Lemma~\ref{lem: W1 psi1 Z} together with the first part of this Lemma.
	\end{proof}
	\begin{lemma}\label{lem:Z23square}
		Let $n\geq3$. Then, for all $i\not=j$,
		\begin{align}
			(4g+2)(g+1)[\cZ_{g,n}^{i,j}]^2=0,\\
			(4g+2)[\cZ_{g,n}^{2,3}]\cdot[\cW_{g,n}^1]=0,\\
			(8g+4)[\cW_{g,n}^1]^2=0.
		\end{align}
	\end{lemma}
	\begin{proof}
		For the first equation, it is enough to consider the case $(i,j)=(1,2)$, which follows from Lemma~\ref{lem:ortogonality}, multiplying
		\[
		(8g+4)[\cW_{g,n}^1]=(4g+2)(g+1)([\cZ_{g,n}^{1,2}]+[\cZ_{g,n}^{1,3}]-[\cZ_{g,n}^{2,3}])
		\]
		by $[\cZ_{g,n}^{1,2}]$. The second relation follows from the first and Lemma~\ref{lem:W1Z23}. The third equation follows from the second, multiplying the relation above by $[\cW_{g,n}^{1}]$.
	\end{proof}		
	\begin{lemma}\label{lem:W1square}
		Let $n\geq3$. Then,
		\begin{equation}\label{eq:W1square}
			2[\cW_{g,n}^1]^2=-[\cW_{g,n}^1]\cdot[\cZ_{g,n}^{2,3}]=(g+1)[\cZ_{g,n}^{2,3}]^2.
		\end{equation}
	\end{lemma}
	\begin{proof}
		From Lemma~\ref{lem: W1 psi1 Z} and Lemma~\ref{lem:W1Z23}, we know that
		\[
		[\cW_{g,n}^1]^2=[\cW_{g,n}^1]\cdot\psi_1-[\cW_{g,n}^1]\cdot[\cZ_{g,n}^{2,3}]=[\cW_{g,n}^1]\cdot\psi_1+(g+1)[\cZ_{g,n}^{2,3}]^2.
		\]
		As $[\cW_{g,n}^1]\cdot\psi_1=-[\cW_{g,n}]^2$ by Lemma~\ref{lem: W1 psi1}, this concludes.
	\end{proof}
	For $n>3$ there are more relations.
	\begin{lemma}\label{lem:Z1isquarei>3}
		Let $n\geq i>3$. Then,
		\begin{equation}
			[\cZ_{g,n}^{1,i}]^2=-[\cZ_{g,n}^{1,i}]\cdot[\cZ_{g,n}^{2,3}]=[\cZ_{g,n}^{2,3}]^2.
		\end{equation}
	\end{lemma}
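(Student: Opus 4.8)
The plan is to derive both equalities purely formally, by feeding a single instance of the linear relation of Proposition~\ref{prop:relationsZ} through multiplication by two of the generators, and then invoking the vanishing of products of divisor classes with disjoint support. The only geometric input is the criterion recorded in the Remark after the first Construction: $\mathcal{Z}_{g,n}^{i,j}\cap\mathcal{Z}_{g,n}^{l,m}=\emptyset$ exactly when the index pairs $\{i,j\}$ and $\{l,m\}$ share a single element, in which case $[\mathcal{Z}_{g,n}^{i,j}]\cdot[\mathcal{Z}_{g,n}^{l,m}]=0$, the refined product being supported on the empty intersection.

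Concretely, I would first apply Proposition~\ref{prop:relationsZ} to the pair $(2,i)$ with $i>3$; the two copies of $[\mathcal{Z}_{g,n}^{1,2}]$ cancel, leaving
\[
[\mathcal{Z}_{g,n}^{2,i}]=[\mathcal{Z}_{g,n}^{1,i}]+[\mathcal{Z}_{g,n}^{2,3}]-[\mathcal{Z}_{g,n}^{1,3}].
\]
Multiplying this identity by $[\mathcal{Z}_{g,n}^{1,i}]$, the left-hand side vanishes because $\{2,i\}$ and $\{1,i\}$ overlap only in the index $i$, while $[\mathcal{Z}_{g,n}^{1,3}]\cdot[\mathcal{Z}_{g,n}^{1,i}]=0$ by Lemma~\ref{lem:ortogonality} (both pass through the first section); what survives is
\[
0=[\mathcal{Z}_{g,n}^{1,i}]^2+[\mathcal{Z}_{g,n}^{2,3}]\cdot[\mathcal{Z}_{g,n}^{1,i}],
\]
giving the first equality. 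Multiplying the same identity instead by $[\mathcal{Z}_{g,n}^{2,3}]$ again kills the left-hand side (now $\{2,i\}$ and $\{2,3\}$ share only the index $2$) and kills $[\mathcal{Z}_{g,n}^{1,3}]\cdot[\mathcal{Z}_{g,n}^{2,3}]=0$ by Lemma~\ref{lem:ortogonality}, leaving
\[
0=[\mathcal{Z}_{g,n}^{1,i}]\cdot[\mathcal{Z}_{g,n}^{2,3}]+[\mathcal{Z}_{g,n}^{2,3}]^2,
\]
which is the second equality. Chaining the two yields the full chain in the statement.

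There is no real obstacle here: once the right instance of the relation and the right disjointness facts are assembled, the argument is a two-line computation. The only point requiring care is the index bookkeeping — verifying that for $i>3$ both $\{1,i\}$ and $\{2,3\}$ meet $\{2,i\}$ in exactly one index (so their products with $[\mathcal{Z}_{g,n}^{2,i}]$ vanish), while $\{1,i\}$ and $\{2,3\}$ are \emph{disjoint}, so that $[\mathcal{Z}_{g,n}^{1,i}]\cdot[\mathcal{Z}_{g,n}^{2,3}]$ need not vanish and legitimately appears as the nonzero middle term linking the two self-intersections.
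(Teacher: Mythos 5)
Your proof is correct and takes essentially the same route as the paper: the paper applies Proposition~\ref{prop:relationsZ} to the pair $(3,i)$, rearranged as $[\mathcal{Z}_{g,n}^{1,i}]=-[\mathcal{Z}_{g,n}^{2,3}]+[\mathcal{Z}_{g,n}^{1,2}]+[\mathcal{Z}_{g,n}^{3,i}]$, and then multiplies by $[\mathcal{Z}_{g,n}^{1,i}]$ and $[\mathcal{Z}_{g,n}^{2,3}]$ exactly as you do, with the disjoint-support vanishings absorbing all other terms. Your choice of the pair $(2,i)$ instead of $(3,i)$ is a purely cosmetic variant, and your index bookkeeping is accurate.
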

	\begin{proof}
		By Proposition~\ref{prop:relationsZ},
		\[
		[\cZ_{g,n}^{1,i}]=-[\cZ_{g,n}^{2,3}]+[\cZ_{g,n}^{1,2}]+[\cZ_{g,n}^{3,i}].
		\]
		Multiplying by $[\cZ_{g,n}^{1,i}]$ and $[\cZ_{g,n}^{2,3}]$ we get the two equalities, respectively.
	\end{proof}
	\begin{lemma}\label{lem:Z1isquareall}
		Let $n\geq5$ and $i>3$, then
		\begin{equation}
			[\cZ_{g,n}^{1,2}]^2=[\cZ_{g,n}^{1,3}]^2=[\cZ_{g,n}^{1,i}]^2=-[\cZ_{g,n}^{1,i}]\cdot[\cZ_{g,n}^{2,3}]=[\cZ_{g,n}^{2,3}]^2.
		\end{equation}
	\end{lemma}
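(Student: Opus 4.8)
The plan is to reduce the statement to the single identity $[\mathcal{Z}_{g,n}^{1,2}]^2=[\mathcal{Z}_{g,n}^{2,3}]^2$ together with its mirror under the interchange $2\leftrightarrow3$, and then to establish this by expanding a well-chosen divisor relation and evaluating one proper self-intersection. Throughout I abbreviate $z_{ab}:=[\mathcal{Z}_{g,n}^{a,b}]$. First, Lemma~\ref{lem:Z1isquarei>3} already gives, for every $i>3$, the chain $z_{1i}^2=-z_{1i}z_{23}=z_{23}^2$, so the last three equalities of the statement are in hand and, in particular, all the ``far'' squares coincide with $z_{23}^2$. It therefore remains to prove $z_{12}^2=z_{23}^2$ (the equality $z_{13}^2=z_{23}^2$ follows verbatim after interchanging $2$ and $3$). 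Since $n\geq5$, I fix two \emph{distinct} indices $b,c$ with $b,c>3$; the availability of two such indices is the only place the hypothesis $n\ge5$ is used, and it is what makes the argument fail for $n=4$.

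The engine is Proposition~\ref{prop:relationsZ}. Although $z_{12}$ is itself one of the chosen generators and so cannot be expanded directly, solving the relation for the pair $\{3,b\}$ yields $z_{12}=z_{1b}+z_{23}-z_{3b}$. Multiplying this by $z_{12}$ and discarding the products $z_{1b}z_{12}$ and $z_{23}z_{12}$, which vanish by Lemma~\ref{lem:ortogonality} because $\{1,b\}$ and $\{2,3\}$ each meet $\{1,2\}$ in a single index, I obtain $z_{12}^2=-\,z_{12}z_{3b}$. Thus the whole problem collapses to computing the single proper intersection $z_{12}\cdot z_{3b}$ of two disjoint divisors.

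To evaluate $z_{12}z_{3b}$ I expand $z_{12}$ a second time, now through the \emph{other} index $c$: the $\{3,c\}$ relation gives $z_{12}=z_{1c}+z_{23}-z_{3c}$, and multiplying by $z_{3b}$ kills both $z_{23}z_{3b}$ and $z_{3c}z_{3b}$, since these pairs meet $\{3,b\}$ in exactly the single index $3$ (for $z_{3c}z_{3b}$ this is precisely where $c\neq b$ enters), so both products vanish by the emptiness criterion of the Remark following the Construction. Hence $z_{12}z_{3b}=z_{1c}z_{3b}$. Finally I expand $z_{3b}=z_{1b}+z_{23}-z_{12}$ and multiply by $z_{1c}$: the terms $z_{1b}z_{1c}$ and $z_{12}z_{1c}$ drop by Lemma~\ref{lem:ortogonality} (both pairs share the index $1$), while $z_{1c}z_{23}=-z_{23}^2$ by Lemma~\ref{lem:Z1isquarei>3} since $c>3$. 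Combining, $z_{12}z_{3b}=-z_{23}^2$, whence $z_{12}^2=z_{23}^2$.

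The one genuinely delicate point, and the main obstacle I anticipate, is precisely the evaluation of the disjoint product $z_{12}z_{3b}$: because $z_{12}$ is a generator it must be rewritten indirectly through a neighbouring relation, and the rewriting introduces further disjoint products that can only be cleared by playing the index $b$ off against a \emph{second} far index $c$. This is exactly why two distinct indices $b,c>3$ (hence $n\geq5$) are indispensable, and it reflects the fact that for $n=4$ the difference of squares survives only as $(g+1)$-torsion rather than vanishing.
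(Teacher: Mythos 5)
Your proof is correct, and it reaches the conclusion by a genuinely different mechanism than the paper. The paper's own proof is a one-line symmetry argument: by Lemma~\ref{lem:Z1isquarei>3} one has $[\mathcal{Z}_{g,n}^{1,4}]^2=[\mathcal{Z}_{g,n}^{2,3}]^2=[\mathcal{Z}_{g,n}^{1,5}]^2$ (this is where $n\geq5$ enters), hence $[\mathcal{Z}_{g,n}^{1,4}]^2=[\mathcal{Z}_{g,n}^{1,5}]^2$, and since this relation is preserved under the automorphisms of $\mathcal{H}_{g,n}$ permuting the sections, it propagates to $[\mathcal{Z}_{g,n}^{a,b}]^2=[\mathcal{Z}_{g,n}^{a,c}]^2$ for all admissible index pairs; in particular $[\mathcal{Z}_{g,n}^{1,2}]^2=[\mathcal{Z}_{g,n}^{1,4}]^2=[\mathcal{Z}_{g,n}^{2,3}]^2$ and likewise for $[\mathcal{Z}_{g,n}^{1,3}]^2$. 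You instead avoid the symmetric-group action altogether and work purely inside the linear relations of Proposition~\ref{prop:relationsZ}: your three expansions ($z_{12}=z_{1b}+z_{23}-z_{3b}$, then $z_{12}=z_{1c}+z_{23}-z_{3c}$ against $z_{3b}$, then $z_{3b}=z_{1b}+z_{23}-z_{12}$ against $z_{1c}$) all check out, the vanishing products are correctly justified --- note that $z_{23}z_{3b}=0$ and $z_{3c}z_{3b}=0$ are not literally among the cases of Lemma~\ref{lem:ortogonality}, so your appeal to the emptiness criterion of the Remark after the Construction is exactly the right citation --- and your requirement of two distinct indices $b,c>3$ is the precise analogue of the paper's need for the indices $4$ and $5$. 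What the paper's route buys is brevity, at the cost of invoking equivariance of the Chow ring under permutation of the marked points; what your route buys is a self-contained computation that, as a byproduct, explicitly evaluates the proper intersection $z_{12}\cdot z_{3b}=-z_{23}^2$, which is consistent with (and slightly sharpens) the paper's later observation that $[\mathcal{Z}_{g,n}^{2,3}]^2=-[\mathcal{Z}_{g,n}^{1,4}]\cdot[\mathcal{Z}_{g,n}^{2,3}]$ is a proper intersection. Your closing remark about why the argument must fail at $n=4$ also matches the paper, where the difference of squares is only killed by $g+1$ (Theorem~\ref{thm:ChowringHg4}) rather than vanishing.
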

	\begin{proof}
		It follows from Lemma~\ref{lem:Z1isquarei>3} and the fact that the relation
		\[
		[\cZ_{g,n}^{1,4}]^2=[\cZ_{g,n}^{1,5}]^2
		\]
		holds even after permuting the indices.
	\end{proof}
	\section{Computation of $\mathrm{CH}^*(\cH_{g,2})$}\label{sec: Chow Hg2}
	In this Section we completely compute the Chow ring of $\cH_{g,2}$. In order to do this, we consider the exact sequence
	\[
	\begin{tikzcd}
		\mathrm{CH}^*(\cH_{g,1}^{1})\arrow[r,"(u_{1,2})_*"] & \mathrm{CH}^*(\cH_{g,2})\arrow[r] & \mathrm{CH}^*(\cH_{g,2}^{\far})\arrow[r] & 0
	\end{tikzcd}
	\]
	where $\cH_{g,1}^{1}=\cH_{g,1}\setminus\cW_{g,1}^1$, see Definition~\ref{def:Hgnij}. In Section~\ref{sec: Chow Hg2far}, we have obtained the Chow ring of $\cH_{g,2}^{\far}$, see Corollary~\ref{cor: Chow Hg2far}. The ring on the left is readily computed.
	\begin{lemma}\label{lem: Chow Z12 n=2}
		Suppose $\charac k>2g$. Then,
		\[
		\mathrm{CH}^*(\cZ^{1,2}_{g,2})\simeq\mathrm{CH}^*(\cH_{g,1}^{1})\simeq\frac{\mathbb{Z}[\psi_1]}{((4g+2)(g+1)\psi_1)}.
		\]
	\end{lemma}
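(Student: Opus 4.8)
The first isomorphism is purely formal: the closed immersion $u_{1,2}$ identifies $\mathcal{Z}_{g,2}^{1,2}$ with $\mathcal{H}_{g,1}^{1,2}$ (see the remark following Definition~\ref{def:Hgnij}), and hence induces an isomorphism of Chow rings. It therefore suffices to compute $\mathrm{CH}^*(\mathcal{H}_{g,1}^{1,2})$, and by definition $\mathcal{H}_{g,1}^{1,2}=\mathcal{H}_{g,1}\setminus\mathcal{W}_{g,1}^1$. The plan is to apply the localization sequence to the closed immersion $i\colon\mathcal{W}_{g,1}^1\hookrightarrow\mathcal{H}_{g,1}$, which is a divisor, together with its open complement:
\[
\mathrm{CH}^{*-1}(\mathcal{W}_{g,1}^1)\xrightarrow{\ i_*\ }\mathrm{CH}^*(\mathcal{H}_{g,1})\xrightarrow{\ j^*\ }\mathrm{CH}^*(\mathcal{H}_{g,1}^{1,2})\to0,
\]
so that $\mathrm{CH}^*(\mathcal{H}_{g,1}^{1,2})$ is exhibited as the quotient of $\mathrm{CH}^*(\mathcal{H}_{g,1})$ by the image of the pushforward $i_*$.

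The key step is to identify this image with the principal ideal generated by $[\mathcal{W}_{g,1}^1]$. By~\cite[Theorem 1.3]{EH22} the ring $\mathrm{CH}^*(\mathcal{W}_{g,1}^1)$ is generated by its psi class, which is the restriction $i^*\psi_1$; consequently $i^*$ is surjective. Then for any $\alpha\in\mathrm{CH}^*(\mathcal{W}_{g,1}^1)$ I would write $\alpha=i^*\beta$ and apply the projection formula to obtain $i_*\alpha=i_*(i^*\beta)=[\mathcal{W}_{g,1}^1]\cdot\beta$; conversely every such multiple arises in this way. Hence $\operatorname{im}(i_*)=([\mathcal{W}_{g,1}^1])$.

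It then remains to quotient the presentation of $\mathrm{CH}^*(\mathcal{H}_{g,1})$ from Corollary~\ref{cor:ChowHg1geometricbase} by $[\mathcal{W}_{g,1}^1]$. Setting this class to zero, the degree~$2$ relation $[\mathcal{W}_{g,1}^1]^2+\psi_1\cdot[\mathcal{W}_{g,1}^1]$ becomes trivial, while the degree~$1$ relation $(4g+2)((g+1)\psi_1-(g-1)[\mathcal{W}_{g,1}^1])$ collapses to $(4g+2)(g+1)\psi_1$. This yields
\[
\mathrm{CH}^*(\mathcal{H}_{g,1}^{1,2})\simeq\frac{\mathbb{Z}[\psi_1]}{((4g+2)(g+1)\psi_1)},
\]
as claimed.

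The only point demanding care is the surjectivity of $i^*$, namely that the generator of $\mathrm{CH}^*(\mathcal{W}_{g,1}^1)$ in the Edidin--Hu presentation is genuinely the restriction of the class $\psi_1$ on $\mathcal{H}_{g,1}$. I expect this to hold because in both cases $\psi_1$ is defined as the pullback of the relative dualizing sheaf along the universal section, and these pullbacks are compatible with restriction to the Weierstrass locus $\mathcal{W}_{g,1}^1$; once this compatibility is pinned down, the remainder is the routine ideal computation above.
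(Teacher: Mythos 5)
Your proof is correct and is precisely the argument the paper leaves implicit (the Lemma is stated there without proof): excision for the divisor $\mathcal{W}_{g,1}^1\hookrightarrow\mathcal{H}_{g,1}$, identification of $\operatorname{im}(i_*)$ with the principal ideal $([\mathcal{W}_{g,1}^1])$ via surjectivity of $i^*$ and the projection formula --- the same device the paper uses in the proof of Proposition~\ref{prop:generateddegree1} --- followed by killing $[\mathcal{W}_{g,1}^1]$ in the presentation of Corollary~\ref{cor:ChowHg1geometricbase}. The one point you flag, that the generator of $\mathrm{CH}^*(\mathcal{W}_{g,1}^1)$ in \cite[Theorem 1.3]{EH22} is indeed $i^*\psi_1$, resolves exactly as you expect, since the universal pointed family over $\mathcal{W}_{g,1}^1$ is the restriction of the one over $\mathcal{H}_{g,1}$; the paper itself uses this identification tacitly when writing $\psi_1|_{\mathcal{W}_{g,1}^1}$ in Lemma~\ref{lem:W12+W1psi1}.
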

	\begin{proof}
		It follows immediately from the localization sequence and Corollary~\ref{cor:ChowHg1geometricbase}.
	\end{proof}
	\begin{theorem}\label{thm:ChowHg2}
		Let $\charac k>2g$. Then, $\mathrm{CH}^*(\cH_{g,2})$ is generated by the classes of $\cW_{g,2}^1$, $\cZ_{g,2}^{1,2}$ and $\psi_1$, and the ideal of relations is generated by
		\addtolength{\jot}{2pt}
		\begin{align*}
			\bullet & \ (4g+2)((g+1)\psi_1-(g-1)[\cW_{g,2}^1]),\\
			\bullet & \  [\cZ_{g,2}^{1,2}]^2+\psi_1\cdot[\cZ_{g,2}^{1,2}],\qquad[\cW_{g,2}^{1}]^2+\psi_1\cdot[\cW_{g,2}^1],\qquad
			[\cZ_{g,2}^{1,2}]\cdot[\cW_{g,2}^1],\\
			\bullet & \  (2g+1)(g+1)\psi_1^2+(2g+1)(3g-1)[\cW_{g,2}^1]^2-(2g+1)(g+1)[\cZ_{g,2}^{1,2}]^2.
		\end{align*}
	\end{theorem}
	\begin{proof}
		We already know that those are the generators, and that every relation in degree 1 is a multiple of the first row in the statement. Moreover, by Section~\ref{sec: generators Chow}, we already know that all the relations above hold.
		
		Now, suppose we have a new relation in degree $q\geq2$. Using the middle row, we get a vanishing linear combination of $\psi_1^q$, $[\cW_{g,2}^1]^q$ and $[\cZ_{g,2}^{1,2}]^q$. Modulo $[\cZ_{g,2}^{1,2}]$, we get a relation in $\mathrm{CH}^*(\cH_{g,2}^{\far})$. By Corollary~\ref{cor: Chow Hg2far}, all the relations in $\CH^*(\cH_{g,2}^{\far})$ are restriction of those appearing in this Theorem; it follows that it is enough to compute the order of $(\cZ_{g,2}^{1,2})^q$. Thanks to Lemma~\ref{lem: Chow Z12 n=2}, this is divisible by $(4g+2)(g+1)$, as the restriction of $[\cZ_{g,2}^{1,2}]$ to $\cZ_{g,2}^{1,2}$ is equal to $-\psi_1$ by Lemma~\ref{lem:psiZ23}. Multiplying the relation in degree 1 with $[\cZ_{g,2}^{1,2}]$, we get that the order of $[\cZ_{g,2}^{1,2}]^q$ is exactly $(4g+2)(g+1)$. This concludes.
	\end{proof}
	\section{The almost computation of $\mathrm{CH}^*(\cH_{g,n})$ for $3\leq n\leq2g+3$}\label{sec: Hgn n>=3}
	\subsection{The Chow ring of $\cH_{g,n}^{\far}$ for $3\leq n\leq2g+3$}\label{subsec: Chow Hgnfar n>=3}
	In this Subsection we compute the Chow ring of $\cH_{g,n}^{\far}$ for $3\leq n\leq2g+2$, and obtain the generator of the ring in the case $n=2g+3$. In this last case, the only thing we do not manage to compute is the multiplicative order of the generator.
	
	Recall that
	\[
	\cH_{g,n}^{\far}\simeq(\cP(2^{2g+3-n},1^n)\times(\mathbb{A}^{n-3}\setminus\widetilde{\Delta}))\setminus\oD_{g,n}
	\]
	and
	\[
	\CH^*(\cP(2^{2g+3-n},1^n)\times(\mathbb{A}^{n-3}\setminus\widetilde{\Delta}))\simeq\frac{\mathbb{Z}[h]}{(2^{2g+3-n}h^{2g+3})}.
	\]
	\begin{proposition}\label{prop: Chow Hgnfar n>=3}
		Let $\charac k>2g$. If $3\leq n\leq 2g+2$, then
		\[
		\CH^*(\cH_{g,n}^{\far})\simeq\frac{\mathbb{Z}[[\cW_{g,n}^1]]}{((8g+4)[\cW_{g,n}^1],2[\cW_{g,n}^1]^2)}.
		\]
		Moreover, the pullback $\CH^*(\cH_{g,n-1}^{\far})\rightarrow\CH^*(\cH_{g,n}^{\far})$ along the map induced by forgetting the $n$-th section is an isomorphism for $4\leq n\leq2g+2$.
	\end{proposition}
	\begin{proof}
		First, we know that the above relations hold; see Lemma~\ref{lem:W1square} for the relation in degree 2. On the other hand, in~\cite{EH22} it has been shown that, for $n\leq2g+2$, the Chow ring of the intersection of all $\cW_{g,n}^i$ has Chow ring generated by the restriction of $\psi_1$, with ideal of relations generated by $2\psi_1$. Since for $3\leq n\leq2g+2$ the restrictions to $\cH_{g,n}^{\far}$ of the classes $\psi_1$ and $[\cW_{g,n}^1]$ agree, this concludes.
	\end{proof}
	\begin{remark}\label{rmk: Chow Hg2g+3far}
		The case $n=2g+3$ is harder, and we do not compute $\mathrm{CH}^*(\cH_{g,2g+3}^{\far})$. The difference with the case $n\leq2g+2$ is that $\cH_{g,2g+3}$ is a scheme, hence the integral Chow ring has to be 0 in degrees above the dimension. However, some things follow easily from what we have done. Recall that
		\[
		\cH_{g,2g+3}^{\far}\simeq(\mathbb{P}^{2g+2}\times(\mathbb{A}^{2g}\setminus\widetilde{\Delta}))\setminus\overline{\Delta}_{g,2g+3}
		\]
		and so the Chow ring is generated by $[\cW_{g,2g+3}^1]$, the relations of Proposition~\ref{prop: Chow Hgnfar n>=3} still hold, and $[\cW_{g,2g+3}^1]^{2g+2}=0$. It is not clear what is the exact multiplicative order of $[\cW_{g,2g+3}^1]$. Moreover, it is easy to convince ourselves that it is not possible to use similar Chow envelopes as in the cases $n=0,1,2$.
	\end{remark}
	\subsection{The Chow ring of $\cH_{g,3}$}
	We apply the same method as for $n=2$. Recall that we have an exact sequence
	\begin{equation*}
		\begin{tikzcd}
			\bigoplus_{i<j}\mathrm{CH}^*(\cZ_{g,3}^{i,j})\arrow[r] & \mathrm{CH}^*(\cH_{g,3})\arrow[r] & \mathrm{CH}^*(\cH_{g,3}^{\far})\arrow[r] & 0.
		\end{tikzcd}
	\end{equation*}
	\begin{lemma}\label{lem: Chow Z12 n=3}
		Let $\charac k>2g$. Then,
		\[
		\mathrm{CH}^*(\cZ_{g,3}^{1})\simeq\mathrm{CH}^*(\cH_{g,2}^{1,3})\simeq\frac{\mathbb{Z}[\psi_1]}{((4g+2)(g+1)\psi_1,(2g+1)(g+1)\psi_1^2)}.
		\]
	\end{lemma}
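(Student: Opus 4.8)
The first isomorphism is a formal consequence of the geometry recalled above: by the Remark following Definition~\ref{def:Hgnij}, the closed immersion $u_{1,3}$ identifies $\mathcal{H}_{g,2}^{1,3}$ with $\mathcal{Z}_{g,3}^{1,3}$ as stacks, so their Chow rings agree. Hence the whole task reduces to computing $\mathrm{CH}^*(\mathcal{H}_{g,2}^{1,3})$, and the plan is to obtain it by excision starting from the already-computed ring $\mathrm{CH}^*(\mathcal{H}_{g,2}\setminus\mathcal{W}_{g,2}^1)$ of Proposition~\ref{prop:ChowHg2minusW1}.

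Unwinding Definition~\ref{def:Hgnij} with $n=3$, $i=1$, $j=3$ (and noting that for $n-1=2$ the only other section index is $2$), one has $\mathcal{H}_{g,2}^{1,3}=\mathcal{H}_{g,2}\setminus(\mathcal{W}_{g,2}^1\cup\mathcal{Z}_{g,2}^{1,2})=(\mathcal{H}_{g,2}\setminus\mathcal{W}_{g,2}^1)\setminus\mathcal{Z}_{g,2}^{1,2}$. First I would record that $\mathcal{Z}_{g,2}^{1,2}$ and $\mathcal{W}_{g,2}^1$ are disjoint, the fact underlying the vanishing in Lemma~\ref{lem:ortogonality}; consequently the closed substack removed in the last step is all of $\mathcal{Z}_{g,2}^{1,2}$, which via $u_{1,2}$ is isomorphic to $\mathcal{H}_{g,1}^{1,2}$ and therefore has Chow ring $\mathbb{Z}[\psi_1]/((4g+2)(g+1)\psi_1)$ by the preceding Lemma.

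Then I would run the localization sequence $\mathrm{CH}^*(\mathcal{Z}_{g,2}^{1,2})\to\mathrm{CH}^*(\mathcal{H}_{g,2}\setminus\mathcal{W}_{g,2}^1)\to\mathrm{CH}^*(\mathcal{H}_{g,2}^{1,3})\to 0$. The key point, and the only step that is not purely formal, is that the image of the pushforward is exactly the ideal generated by $[\mathcal{Z}_{g,2}^{1,2}]$. This follows from the projection formula once one knows that the restriction $\mathrm{CH}^*(\mathcal{H}_{g,2}\setminus\mathcal{W}_{g,2}^1)\to\mathrm{CH}^*(\mathcal{Z}_{g,2}^{1,2})$ is surjective; and this surjectivity is immediate, since the target is generated in degree $1$ by $\psi_1$, which is the restriction of the class $\psi_1$ already present on the source. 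Granting this, $\mathrm{CH}^*(\mathcal{H}_{g,2}^{1,3})$ is the quotient of the presentation in Proposition~\ref{prop:ChowHg2minusW1} by the relation $[\mathcal{Z}_{g,2}^{1,2}]=0$.

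Finally I would perform this substitution. Setting $[\mathcal{Z}_{g,2}^{1,2}]=0$ in the three generating relations of Proposition~\ref{prop:ChowHg2minusW1}, the relation $[\mathcal{Z}_{g,2}^{1,2}]^2+\psi_1[\mathcal{Z}_{g,2}^{1,2}]$ becomes trivial, while the remaining two collapse to $(4g+2)(g+1)\psi_1$ and $(2g+1)(g+1)\psi_1^2$, giving precisely the claimed presentation. The main obstacle is exactly the identification of the image of the pushforward with the ideal $([\mathcal{Z}_{g,2}^{1,2}])$ in the localization step; once the surjectivity of the restriction is in hand, the rest is routine manipulation of a known presentation.
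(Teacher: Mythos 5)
Your proof is correct and takes essentially the same approach as the paper: the paper's one-line proof (``it follows immediately from Theorem~\ref{thm:ChowHg2}'') is precisely the excision-plus-projection-formula argument you spell out, namely quotienting a known presentation by the ideal generated by the fundamental classes of the removed divisors, using surjectivity of restriction to those divisors. The only cosmetic difference is that you excise $\mathcal{Z}_{g,2}^{1,2}$ from $\mathcal{H}_{g,2}\setminus\mathcal{W}_{g,2}^1$ starting from Proposition~\ref{prop:ChowHg2minusW1}, whereas the paper removes $\mathcal{W}_{g,2}^1\cup\mathcal{Z}_{g,2}^{1,2}$ from $\mathcal{H}_{g,2}$ in one step; both yield $\mathbb{Z}[\psi_1]/((4g+2)(g+1)\psi_1,(2g+1)(g+1)\psi_1^2)$.
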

	\begin{proof}
		It follows immediately from the localization sequence and Theorem~\ref{thm:ChowHg2}.
	\end{proof}
	\begin{theorem}\label{thm: Chow Hg n=3}
		Let $\charac k>2g$. Then, $\mathrm{CH}^*(\cH_{g,3})$ is generated by the classes of $\cW_{g,3}^1$, $\cZ_{g,3}^{1,2}$, $\cZ_{g,3}^{1,3}$ and $\cZ_{g,3}^{2,3}$, and the ideal of relations is generated by
		\addtolength{\jot}{2pt}
		\begin{align*}
			\bullet & \ (8g+4)[\cW_{g,3}^1]-(4g+2)(g+1)([\cZ_{g,3}^{1,2}]+[\cZ_{g,3}^{1,3}]-[\cZ_{g,3}^{2,3}]),\\
			\bullet & \ [\cZ_{g,3}^{1,2}]\cdot[\cZ_{g,3}^{1,3}],\qquad[\cZ_{g,3}^{1,j}]\cdot[\cZ_{g,3}^{2,3}]\text{ for }j=2,3,\qquad
			[\cZ_{g,3}^{1,j}]\cdot[\cW_{g,3}^1]\text{ for }j=2,3,\\
			\bullet & \ [\cW_{g,3}^1]\cdot[\cZ_{g,3}^{2,3}]+(g+1)[\cZ_{g,3}^{2,3}]^2,\qquad2[\cW_{g,3}^1]^2-(g+1)[\cZ_{g,3}^{2,3}]^2,\\
			\bullet & \ (2g+1)(g+1)([\cZ_{g,3}^{1,2}]^2-[\cZ_{g,3}^{i,j}]^2)\text{ for }i\not=j,\qquad b_g[\cZ_{g,3}^{i,j}]^2\text{ for }i\not=j.
		\end{align*}
		where $b_g$ is either $(2g+1)(g+1)$ or $(4g+2)(g+1)$.
	\end{theorem}
	\begin{proof}
		We already know that those are the generators, and that the relations in degree 1 are multiples of the first relation in the statement. Moreover, we already know that the relations in the statement hold.
		
		Suppose there is another relation, which would take the form of
		\[
			a[\cW_{g,3}^1]^q+b[\cZ_{g,3}^{1,2}]^q+c[\cZ_{g,3}^{1,3}]^q+d[\cZ_{g,3}^{2,3}]^q.
		\]
		Restricting to $\cH_{g,3}^{\far}$ and applying Proposition~\ref{prop: Chow Hgnfar n>=3}, we get that we can assume $a=0$. Then, by restricting to $\cZ_{g,3}^{1,2}$ and using Lemma~\ref{lem: Chow Z12 n=3}, we reduce to the case $b=0$. Similarly, by restricting to $\cZ_{g,3}^{1,3}$ we get that we can assume $c=0$, hence we are left with computing the orders of $[\cZ_{g,3}^{i,j}]^q$ for $q\geq2$. Restricting to $\cZ_{g,3}^{i,j}$, Lemma~\ref{lem: Chow Z12 n=3} shows that their order is divisible by $(2g+1)(g+1)$. Notice that $(4g+2)(g+1)[\cZ_{g,3}^{i,j}]^q=0$ for all $q\geq3$, as the last line shows; therefore, the only missing data is the order $b_g$ of $[\cZ_{g,3}^{i,j}]^2$, which is $(2g+1)(g+1)$ or $(4g+2)(g+1)$.
	\end{proof}
	\subsection{The Chow ring of $\cH_{g,4}$}
	We start in the same way as before.
	\begin{lemma}
		Suppose $\charac k>2g$. Then, $\CH^*(\cZ_{g,4}^{1,2})$ is isomorphic to
		\[
		\mathrm{CH}^*(\cH_{g,3}^{1})\simeq\frac{\mathbb{Z}[\psi_1]}{((4g+2)(g+1)\psi_1,(g+1)\psi_1^2)}\simeq\frac{\mathbb{Z}[[\cZ_{g,3}^{2,3}]]}{((4g+2)(g+1)[\cZ_{g,3}^{2,3}],(g+1)[\cZ_{g,3}^{2,3}]^2)}.
		\]
	\end{lemma}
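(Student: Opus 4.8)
The plan is to realise $\mathcal{H}_{g,3}^{1,4}$ as an open substack of $\mathcal{H}_{g,3}$ and read off its Chow ring from the complete presentation in Theorem~\ref{thm:ChowHg3}. By Definition~\ref{def:Hgnij} (applied with $i=1$, so that $l$ ranges over $\{2,3\}$), the substack $\mathcal{H}_{g,3}^{1,4}$ is exactly the complement in $\mathcal{H}_{g,3}$ of the three divisors $\mathcal{W}_{g,3}^1$, $\mathcal{Z}_{g,3}^{1,2}$ and $\mathcal{Z}_{g,3}^{1,3}$. First I would invoke the localisation sequence: removing these divisors one at a time, the Chow ring of the open complement is the quotient of $\mathrm{CH}^*(\mathcal{H}_{g,3})$ by the ideal generated by the classes of the removed divisors. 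Here one uses that $\mathrm{CH}^*(\mathcal{H}_{g,3})$ is generated in degree $1$ (Proposition~\ref{prop:generateddegree1}) and that the Chow rings of the removed divisors are likewise generated in degree $1$, so that by the projection formula the image of each pushforward is the corresponding principal ideal. This is the same mechanism that produced $\mathrm{CH}^*(\mathcal{H}_{g,2}^{1,3})$ from Theorem~\ref{thm:ChowHg2}.

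The second step is a purely mechanical simplification. Setting $[\mathcal{W}_{g,3}^1]=[\mathcal{Z}_{g,3}^{1,2}]=[\mathcal{Z}_{g,3}^{1,3}]=0$ in the presentation of Theorem~\ref{thm:ChowHg3}, the unique surviving generator is $[\mathcal{Z}_{g,3}^{2,3}]$. The degree-$1$ relation collapses to $(4g+2)(g+1)[\mathcal{Z}_{g,3}^{2,3}]=0$, while each of the relations $[\mathcal{W}_{g,3}^1]\cdot[\mathcal{Z}_{g,3}^{2,3}]+(g+1)[\mathcal{Z}_{g,3}^{2,3}]^2$ and $2[\mathcal{W}_{g,3}^1]^2-(g+1)[\mathcal{Z}_{g,3}^{2,3}]^2$ collapses to $(g+1)[\mathcal{Z}_{g,3}^{2,3}]^2=0$. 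Every remaining relation becomes either a product involving a vanishing class or an integer multiple of $(g+1)[\mathcal{Z}_{g,3}^{2,3}]^2$; in particular both $(2g+1)(g+1)$ and $b_g$ are multiples of $g+1$, so the undetermined value of $b_g$ in Theorem~\ref{thm:ChowHg3} is irrelevant for this computation. This yields
\[
\mathrm{CH}^*(\mathcal{H}_{g,3}^{1,4})\simeq\frac{\mathbb{Z}[[\mathcal{Z}_{g,3}^{2,3}]]}{\bigl((4g+2)(g+1)[\mathcal{Z}_{g,3}^{2,3}],\,(g+1)[\mathcal{Z}_{g,3}^{2,3}]^2\bigr)}.
\]
To recover the first presentation I would restrict the identity of Proposition~\ref{prop:W1psi1}, namely $[\mathcal{W}_{g,3}^1]=\psi_1+[\mathcal{Z}_{g,3}^{1,2}]+[\mathcal{Z}_{g,3}^{1,3}]-[\mathcal{Z}_{g,3}^{2,3}]$, to $\mathcal{H}_{g,3}^{1,4}$, where the first three classes vanish; this forces $\psi_1=[\mathcal{Z}_{g,3}^{2,3}]$, and substituting identifies the two rings in the statement.

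The step I expect to be the main obstacle is the exactness in the first step: one must check that the kernel of the restriction to the open substack is \emph{exactly} the ideal generated by the three divisor classes, and not something strictly larger. For $\mathcal{Z}_{g,3}^{1,2}$ and $\mathcal{Z}_{g,3}^{1,3}$ the required surjectivity of the restriction map is immediate, since these are open substacks of $\mathcal{H}_{g,2}$ and hence have Chow rings generated in degree $1$; the one genuine point to verify is the surjectivity of the restriction onto $\mathcal{W}_{g,3}^1$, i.e.\ that its Chow ring is generated in degree $1$. Alternatively, and more in the spirit of Propositions~\ref{prop:ChowHg2minusW1} and~\ref{prop:ChowHg3minusW1Z12}, one can treat the displayed quotient as an a priori upper bound and then confirm exactness by the same order-pinning restriction arguments used there, using Lemma~\ref{lem:psiZ23} to control $[\mathcal{Z}_{g,3}^{2,3}]$ and $[\mathcal{Z}_{g,3}^{2,3}]^2$. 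Either way, once exactness is granted everything else is routine substitution.
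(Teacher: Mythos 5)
Your proposal is correct and is essentially the paper's own argument: the paper proves this lemma with the single sentence ``It follows immediately from Theorem~\ref{thm:ChowHg3}'', and your computation --- killing $[\mathcal{W}_{g,3}^1]$, $[\mathcal{Z}_{g,3}^{1,2}]$, $[\mathcal{Z}_{g,3}^{1,3}]$ in that presentation, noting both candidate values of $b_g$ are multiples of $g+1$, and identifying $\psi_1=[\mathcal{Z}_{g,3}^{2,3}]$ via Proposition~\ref{prop:W1psi1} --- is precisely what that sentence compresses. The exactness point you flag (surjectivity of restriction onto $\mathrm{CH}^*(\mathcal{W}_{g,3}^1)$, needed so that the kernel of restriction is exactly the ideal of the removed divisor classes) is indeed left implicit in the paper, and your fallback via the order-pinning restriction arguments of Propositions~\ref{prop:ChowHg2minusW1} and~\ref{prop:ChowHg3minusW1Z12} is the paper's own device for such open substacks, so you are if anything more careful than the source.
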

	\begin{proof}
		It follows immediately from Theorem~\ref{thm: Chow Hg n=3}.
	\end{proof}
	\begin{theorem}\label{thm: Chow Hgn n=4}
		Let $\charac k>2g$. Then, $\mathrm{CH}^*(\cH_{g,4})$ is generated by the classes of $\cW_{g,4}^1$, $\cZ_{g,4}^{1,2}$, $\cZ_{g,4}^{1,3}$, $\cZ_{g,4}^{1,4}$ and $\cZ_{g,4}^{2,3}$, and the ideal of relations is generated by
		\addtolength{\jot}{2pt}
		\begin{align*}
			\bullet & \ (8g+4)[\cW_{g,4}^1]-(4g+2)(g+1)([\cZ_{g,4}^{1,2}]+[\cZ_{g,4}^{1,3}]-[\cZ_{g,4}^{2,3}]),\\
			\bullet & \ [\cZ_{g,4}^{1,i}]\cdot[\cZ_{g,4}^{1,j}]\text{ for }1<i<j,\qquad[\cZ_{g,4}^{1,j}]\cdot[\cZ_{g,4}^{2,3}]\text{ for }j=2,3,\\
			\bullet & \ 
			[\cZ_{g,4}^{1,j}]\cdot[\cW_{g,3}^1]\text{ for }j>1,\qquad [\cZ_{g,4}^{2,3}]\cdot[\cW_{g,3}^1]+(g+1)[\cZ_{g,4}^{2,3}]^2,\\
			\bullet & \
			2[\cW_{g,4}^1]^2-(g+1)[\cZ_{g,4}^{2,3}]^2,\qquad[\cZ_{g,4}^{2,3}]^2+[\cZ_{g,4}^{2,3}]\cdot[\cZ_{g,4}^{1,4}],\qquad [\cZ_{g,4}^{2,3}]^2-[\cZ_{g,4}^{1,4}]^2,\\
			\bullet & \ (g+1)([\cZ_{g,4}^{1,2}]^2-[\cZ_{g,4}^{i,j}]^2)\text{ for }i\not=j,\qquad b_g[\cZ_{g,4}^{i,j}]^2\text{ for }i\not=j
		\end{align*}
		for some integer $b_g=b'_g(g+1)$ such that $b'_g$ divides $4g+2$.
	\end{theorem}
	\begin{proof}
		We already know that those are the generators and that the relations hold. The only relation which is less clear is the bottom left. However, recall that
		\[
		2[\cW_{g,4}^{1}]^2=(g+1)[\cZ_{g,4}^{2,3}]^2
		\]
		by Lemma~\ref{lem:W1Z23}. Permuting the indices, this gives
		\[
			(g+1)[\cZ_{g,4}^{2,3}]^2=(g+1)[\cZ_{g,4}^{2,4}]^2=(g+1)[\cZ_{g,4}^{1,3}]^2,
		\]
		where the last equality holds even without multiplying by $(g+1)$. Finally, the same argument as in Theorem~\ref{thm: Chow Hg n=3} applies to show that the expressions in the statement generate the ideal of relations.
	\end{proof}
	\subsection{The Chow ring of $\cH_{g,n}$ for $5\leq n\leq2g+3$} We adopt the same strategy as before.
	\begin{lemma}\label{lem:Chowringsintermediate5<=n<=2g+3}
		Let $\charac k>2g$, and let $5\leq n\leq2g+3$. Then,
		\[
		\mathrm{CH}^*(\cZ_{g,n}^{1,2})\simeq\mathrm{CH}^*(\cH_{g,n-1}^1)\simeq\frac{\mathbb{Z}[\psi_1]}{((4g+2)(g+1)\psi_1,\psi_1^2)}.
		\]
	\end{lemma}
	\begin{proof}
		It follows from Theorem~\ref{thm: Chow Hgn n=4} that $\psi_1$ is a generator satisfying $\psi_1^2=0$. As we also know its Picard group, this concludes.
	\end{proof}
	\begin{theorem}\label{thm: Chow Hgn 5<=n<=2g+2}
		Let $\charac k>2g$ and $5\leq n\leq2g+2$. Then, $\mathrm{CH}^*(\cH_{g,n})$ is generated by the classes of $\cW_{g,n}^1$, $\cZ_{g,n}^{1,i}$ for $1<i\leq n$ and $\cZ_{g,n}^{2,3}$, and the ideal of relations is generated by
		\addtolength{\jot}{2pt}
		\begin{align*}
			\bullet & \
			(8g+4)[\cW_{g,n}^1]-(4g+2)(g+1)([\cZ_{g,n}^{1,2}]+[\cZ_{g,n}^{1,3}]-[\cZ_{g,n}^{2,3}]),\\
			\bullet & \
			[\cZ_{g,n}^{1,i}]\cdot[\cZ_{g,n}^{1,j}]\text{ for }1<i<j,\qquad[\cZ_{g,n}^{1,j}]\cdot[\cZ_{g,n}^{2,3}]\text{ for }j=2,3,\\
			\bullet & \
			[\cZ_{g,n}^{1,j}]\cdot[\cW_{g,n}^1]\text{ for }j\not=1,\qquad[\cZ_{g,n}^{2,3}]\cdot[\cW_{g,n}^1]+(g+1)[\cZ_{g,n}^{2,3}]^2,\\
			\bullet & \
			2[\cW_{g,n}^1]^2-(g+1)[\cZ_{g,n}^{2,3}]^2,\qquad[\cZ_{g,n}^{2,3}]^2+[\cZ_{g,n}^{2,3}]\cdot[\cZ_{g,n}^{1,i}]\text{ for }i>3,\\
			\bullet & \
			[\cZ_{g,n}^{2,3}]^2-[\cZ_{g,n}^{1,j}]^2\text{ for }j\not=1,\qquad b_g[\cZ_{g,n}^{i,j}]^2\text{ for }i\not=j
		\end{align*}
		where $b_g$ is a positive integer dividing $(4g+2)(g+1)$.
	\end{theorem}
	\begin{proof}
		It can be proved in the same way as Theorems~\ref{thm: Chow Hg n=3} and~\ref{thm: Chow Hgn n=4}.
	\end{proof}
	\begin{remark}
		For $3\leq n\leq2g+2$, we are left with computing $b_g$, which we do not do in this article. There are some difficulties in using higher Chow rings (with $\mathbb{Z}_l$-coefficients) in this case, since it can be shown that the first higher Chow ring of $\cH_{g,n}^{\far}$ is non-zero in the cases we are interested in. Notice that $b_g$ is also the order of $[\cZ_{g,n}^{2,3}]\cdot[\cZ_{g,n}^{1,4}]$ for $n\geq4$, so that at least we are dealing with a proper intersection.
	\end{remark}
	The following Theorem can be proved in the same way.
	\begin{theorem}\label{thm: Chow Hgn n=2g+3}
		Let $\charac k>2g$ and $n=2g+3$. Then, $\mathrm{CH}^*(\cH_{g,2g+3})$ is generated by the classes of \/ $\cW_{g,2g+3}^1$, $\cZ_{g,2g+3}^{1,i}$ for $1<i\leq2g+3$ and $\cZ_{g,2g+3}^{2,3}$, and the ideal of relations is generated by
		\addtolength{\jot}{2pt}
		\begin{align*}
			\bullet & \
			(8g+4)[\cW_{g,2g+3}^1]-(4g+2)(g+1)([\cZ_{g,2g+3}^{1,2}]+[\cZ_{g,2g+3}^{1,3}]-[\cZ_{g,2g+3}^{2,3}]),\\
			\bullet & \
			[\cZ_{g,2g+3}^{1,i}]\cdot[\cZ_{g,2g+3}^{1,j}]\text{ for }1<i<j,\qquad[\cZ_{g,2g+3}^{1,j}]\cdot[\cZ_{g,2g+3}^{2,3}]\text{ for }j=2,3,\\
			\bullet & \
			[\cZ_{g,2g+3}^{1,j}]\cdot[\cW_{g,2g+3}^1]\text{ for }j\not=1,\qquad[\cZ_{g,2g+3}^{2,3}]\cdot[\cW_{g,2g+3}^1]+(g+1)[\cZ_{g,2g+3}^{2,3}]^2,\\
			\bullet & \
			2[\cW_{g,2g+3}^1]^2-(g+1)[\cZ_{g,2g+3}^{2,3}]^2,\qquad[\cZ_{g,2g+3}^{2,3}]^2+[\cZ_{g,2g+3}^{2,3}]\cdot[\cZ_{g,2g+3}^{1,i}]\text{ for }i>3,\\
			\bullet & \
			[\cZ_{g,2g+3}^{2,3}]^2-[\cZ_{g,2g+3}^{1,j}]^2\text{ for }j\not=1,\qquad b_g[\cZ_{g,2g+3}^{i,j}]^2\text{ for }i\not=j,\qquad[\cW_{g,2g+3}^1]^{c_g}.
		\end{align*}
		where $b_g$ is a positive integer dividing $(4g+2)(g+1)$, and $1<c_g\leq2g+2$.
	\end{theorem}
	
	\bibliographystyle{amsalpha}
	\bibliography{library}
	
\end{document}